\renewcommand{\algorithmicrequire}
\newcommand\figcaption{\def\@captype{figure}\caption}
\newcommand\tabcaption{\def\@captype{table}\caption}
\numberwithin{equation}{section}
\newtheorem{sch}{Scheme}[section]
\newtheorem{example}{Example}[section]
\newtheorem{remark}{Remark}[section]
\newtheorem{theorem}{Theorem}[section]
\newcommand{\beq}{\begin{equation}}
\newcommand{\eeq}{\end{equation}}
\newcommand{\bea}{\begin{eqnarray}}
\newcommand{\eea}{\end{eqnarray}}
\newcommand{\beas}{\begin{eqnarray*}}
\newcommand{\eeas}{\end{eqnarray*}}
\newcommand{\uu}{\mathrm{\textbf{u}}}
\newcommand{\dd}{\mathrm{\textbf{d}}}
\newcommand{\ww}{\mathrm{\textbf{w}}}
\newcommand{\vv}{\mathrm{\textbf{v}}}
\newcommand{\bx}{\mathrm{\textbf{x}}}
\newcommand{\nn}{\textbf{n}}
\begin{document}
\title[PCSAV method for the Ericksen-Leslie model]{A linear, unconditionally stable, second order decoupled method for the Ericksen-Leslie model with SAV approach}

\author[R. Cao and N. Yi]{Ruonan Cao$^\dagger$ and Nianyu Yi$^{\ddagger,\S}$}
\address{$^\dagger$ School of Mathematics and Computational Science, Xiangtan University, Xiangtan 411105, P.R.China }
\email{ruonan9cao@163.com}
\address{$^\ddagger$ Hunan Key Laboratory for Computation and Simulation in Science and Engineering; School of Mathematics and Computational Science, Xiangtan University, Xiangtan 411105, P.R.China}
\email{yinianyu@xtu.edu.cn}

\subjclass{}


\begin{abstract}
In this paper, we present a second order, linear, fully decoupled, and unconditionally energy stable scheme for solving the Erickson-Leslie model.
This approach integrates the pressure correction method with a scalar auxiliary variable technique. We rigorously demonstrate the unconditional energy stability of the proposed scheme.
Furthermore, we present several numerical experiments to validate its convergence order, stability, and computational efficiency. 
\end{abstract}
\keywords{Nematic liquid crystal flows, Ericksen-Leslie model, pressure-correction, scalar auxiliary variable, energy stability.}

\maketitle


\section{Introduction}
Liquid crystals represent the fourth state of matter, distinct from gases, liquids, and solids. 
There are four types of liquid crystals:nematic, smectic, cholesteric, and discotic, differentiated by molecular alignments.
Furthermore, they also can be categorized into thermotropic or lyotropic depending on their production conditions. Nematic liquid crystals are a common type of the thermotropic class.

Ericksen \cite{18Ericksen,19Ericksen} and Leslie \cite{36Leslie,4Leslie} developed the hydrodynamic theory of the nematic liquid crystals. They proposed the Ericksen-Leslie model, which consists of an convective harmonic map heat flow equation for the evolution of the director field coupled with an incompressible Navier-Stokes equation for the velocity and the pressure with a certain additional stress tensor. 
The Ericksen-Leslie model characterizes the macroscopic orientation of nematic liquid crystals using a unit vector, known as the director, and describes the elastic distortion of the nematic phase through the Oseen-Frank elastic energy \cite{36Leslie,4Leslie,2003ad}. As an alternative, the Landau-de Gennes theory \cite{95pg,15cs,12pai,98qian,02wang,17zhao} offers a more general tensorial description that captures both the orientation and degree of order of the liquid crystal phase.

Due to the complexity of the original Ericksen-Leslie model with some reaction-coupling terms, Lin \cite{12Lin}  proposed the following simplified Ericksen-Leslie model:
\begin{numcases}{}
\frac{\partial \dd}{\partial t}+\uu\cdot\nabla \dd -\gamma\Delta\dd-\gamma\lvert\nabla\dd\rvert^2\dd=0,\label{eq:SEL-1}\\
\frac{\partial \uu}{\partial t}+\uu\cdot\nabla \uu-\nu\Delta\uu+\nabla p+\lambda\nabla\cdot\left(\left(\nabla\dd\right)^t\nabla\dd\right)=0,\label{eq:SEL-2}\\
\nabla\cdot\uu=0,\label{eq:SEL-3}\\
|\dd|=1,\label{eq:SEL-4}
\end{numcases}
in $\Omega_T=\Omega\times (0,T]$, where $\Omega\subset R^d \ (d=2, 3)$ is a bounded domain with a Lipschitz continuous boundary $\partial \Omega$. The system is subject to the boundary conditions
\begin{equation}\label{bnddata}
\uu|_{\partial \Omega}=0, \quad \partial_{\nn}\dd |_{\partial\Omega}=0,
\end{equation}
where $\nn$ denotes the unit outward normal vector, and with the initial conditions
\begin{equation}\label{inidata}
\dd\left(\bx,0\right)=\dd^0\left(\bx\right), \quad \uu\left(\bx,0\right)=\uu^0\left(\bx\right), \quad
p\left(\bx,0\right)=p^0\left(\bx\right), \quad
\text{for}\;\bx\in\Omega,
\end{equation}
where $\uu^0:\Omega_{T}\to\mathbb{R}^d$, $\dd^0:\Omega_{T}\to\mathbb{R}^d$, and $p^0:\Omega_{T}\to\mathbb{R}$ are given functions.
The unknowns are the velocity field $\uu$, pressure $p$, and director field $\dd$. 
In the Ericksen-Leslie model \eqref{eq:SEL-1}-\eqref{eq:SEL-4}, the vector $\mathbf{d}$ describes the local average orientation of nematic liquid crystal molecules. 
Since these molecules are rigid and inextensible, their orientation is naturally modeled by a unit vector field. The unit-length constraint $|\mathbf{d}| = 1$ is a fundamental physical assumption in modeling nematic liquid crystals. 
The positive physical constant parameters are the fluid viscosity $\nu$, elasticity constant $\lambda$ and relaxation time constant $\gamma$.

Let $\Vert\cdot\Vert$ denotes the $L^2$ norm of scalars, vectors or tensors, and $(\cdot,\cdot)$ means the $L^2$ inner product. Taking the inner products of \eqref{eq:SEL-1} with $\lambda\left(-\Delta\dd-\lvert\nabla\dd\rvert^2\dd\right)$, \eqref{eq:SEL-2} with $\uu$, and summing up the two equations with some algebraic manipulation, we obtain the following energy dissipation law 
$$\frac{d}{dt}W_s(\uu,\dd)
=-\nu\lVert \nabla \uu\rVert^2-\lambda\gamma\|\Delta\dd+|\nabla\dd|^2\dd\|^2\leq 0.$$
where
$$W_s(\uu,\dd)=\frac{1}{2}\lVert \uu\rVert^2
+\frac{\lambda}{2}\lVert\nabla\dd\rVert^2.$$


Lin and Liu \cite{20Lin} established its well-posedness of the simplified Ericksen-Leslie model \eqref{eq:SEL-1}-\eqref{eq:SEL-4}. Indeed, they prove global existence of weak solutions as well as local existence of strong solutions to system \eqref{eq:SEL-1}-\eqref{eq:SEL-4}.
For numerical approximation we refer to \cite{24Becker, 22Du, 23Lin}. 
Du, Guo and Shen \cite{22Du} proposed a Fourier-spectral method and derived an error estimation, which demonstrated the spectral accuracy of the proposed method. 
In \cite{23Lin}, Lin and Liu presented the $C^0$ finite element method for $2D$ hydrodynamic liquid crystal model. 

The Ginzburg-Landau penalty function \cite{21Liu} is commonly used to relax the constraint $|\dd|=1$, then obtain a modified Ericksen-Leslie model:
\begin{numcases}{}
\frac{\partial \dd}{\partial t}+\uu\cdot\nabla \dd +\gamma\left(-\Delta\dd+f_\varepsilon\left(\dd\right)\right)=0,\;|\dd|\le1,\label{eq:PEL-1}\\
\frac{\partial \uu}{\partial t}+\uu\cdot\nabla \uu-\nu\Delta\uu+\nabla p+\lambda\nabla\cdot\left(\left(\nabla\dd\right)^t\nabla\dd\right)=0,\label{eq:PEL-2}\\
\nabla\cdot\uu=0,\label{eq:PEL-3}
\end{numcases}
where $0<\varepsilon\ll1$ is the penalty parameter.
Here, the Ginzburg-Landau penalty function $f_\varepsilon(\dd)$ is defined as $f_\varepsilon(\dd):=\frac{1}{\varepsilon^2}\left(|\dd|^2-1\right)\dd$, which is the gradient of the Ginzburg-Landau potential function $F_\varepsilon(\dd)=\frac{1}{4\varepsilon^2}\left(|\dd|^2-1\right)^2$, i.e. $f_\varepsilon(\dd)=\nabla_{\dd} F_\varepsilon(\dd)$.
As $\varepsilon \to 0$, the penalized system \eqref{eq:PEL-1}-\eqref{eq:PEL-3} equivalents to the simplified model \eqref{eq:SEL-1}-\eqref{eq:SEL-4} \cite{32Badia}.

Multiplying \eqref{eq:PEL-1} by $\lambda\left(-\Delta\dd+f_\varepsilon(\dd)\right)$ and \eqref{eq:PEL-2} by $\uu$, after the integration in $\Omega$, we obtain the following energy law:
\begin{equation}\label{edl4mEL}
\frac{d}{dt}W(\uu,\dd)
=-\nu\lVert \nabla \uu\rVert^2-\lambda\gamma\|-\Delta\dd+f_\varepsilon(\dd)\|^2,
\end{equation}
where 
$$W=\frac{1}{2}\lVert \uu\rVert^2
+\frac{\lambda}{2}\lVert\nabla\dd\rVert^2+\lambda\int_\Omega F_\varepsilon(\dd)\;d\bx$$
represents the total energy of the system \eqref{eq:PEL-1}-\eqref{eq:PEL-3}, consisting of the kinetic energy $W_{kin}=\frac{1}{2}\lVert u\rVert^2$, the elastic energy $W_{ela}=\frac{\lambda}{2}\lVert\nabla\dd\rVert^2$, and the penalty energy $W_{pen}=\lambda\int_\Omega F_\varepsilon(\dd)\;d\bx$. 

Liu and Walkington \cite{21Liu} developed a finite element scheme for \eqref{eq:PEL-1}-\eqref{eq:PEL-3} and simulated the dynamical behaviors of defects in liquid crystals. 
Becker and Feng et al.\cite{24Becker} proposed two fully discretized schemes: one for the system \eqref{eq:PEL-1}-\eqref{eq:PEL-3}, which is unconditionally stable and satisfies the discrete energy law, and the other for direct discretization of system \eqref{eq:SEL-1}-\eqref{eq:SEL-4}. 
Girault and Guill\'en-Gonz\'alez \cite{14Girault} investigated a fully discrete scheme based on $C^0$ finite elements in space and a semi-implicit Euler scheme in time. They established its unconditional stability and convergence theories.
In \cite{25Guillen}, Guill\'en-Gonz\'alez and Guti\'errez-Santacreu introduced a linear, unconditionally stable, semi-implicit scheme that satisfies the energy law for the modified system \eqref{eq:PEL-1}-\eqref{eq:PEL-3}.
Zheng et al. \cite{23zheng} proposed an IMEX-SAV-DG approach to construct a linear and fully decoupled numerical scheme. They established the unconditional energy stability of the method and provided a rigorous error analysis.
Zou et al. \cite{23zou} combined the extrapolated Crank-Nicolson time-stepping scheme with a convex splitting method to develop a fully discrete virtual element scheme. They also established the stability and convergence of the proposed scheme.
Chen and Yang \cite{23chen} considered this model coupled with the Cahn-Hilliard equation to describe the behavior of liquid crystal phase immersed in free flow.
They introduced two auxiliary variables to construct a fully decoupled numerical scheme.
Additional related contributions can be found in \cite{22chen,26Liu,27Zhao,21wang,23zhang,28Zhao,29Zhao}.

In view of the energy decay property of the modified Ericksen-Leslie model \eqref{eq:PEL-1}-\eqref{eq:PEL-3}, it is desirable to design numerical schemes that preserve the discrete version of energy dissipation law \eqref{edl4mEL}.
However, developing such schemes remains challenging due to the strong nonlinearities and coupling among the velocity, pressure, and director fields.
This paper aims to construct a linear, unconditionally stable and fully decoupled numerical scheme for the modified Ericksen-Leslie model.
The main difficulty in designing such efficient numerical scheme comes from the strongly nonlinear and coupled terms.  
To overcome these challenges, we employ the Lagrange multipliers \cite{25Guillen} to linearize the nonlinear term and the rotational incremental pressure correction (PC) method \cite{11Guermond} to decouple the velocity and pressure.
Furthermore, we implement the scalar auxiliary variable (SAV) \cite{16Jiang} decoupling strategy to handle the convection terms in the system, enhancing the overall efficiency and stability of the numerical scheme.
We then apply the second-order backward differentiation formula (BDF2) to the modified Ericksen-Leslie model \eqref{eq:PEL-1}-\eqref{eq:PEL-3} and establish the unconditional energy stability of the proposed scheme.  
A notable feature of this scheme is that each variable can be updated by solving a single linear system at each time step without any iterative coupling. The SAV strategy is employed to handle the nonlinear convection terms while preserving the modified energy law. Combined with the pressure correction strategy and the Lagrange multiplier method, the scheme achieves second-order accuracy in time, ensures unconditional energy stability, and significantly improves computational efficiency.

This paper is organized as follows. In Section \ref{sec:pcsav}, we develop a semi-discrete scheme based on the pressure correction method and the scalar auxiliary variable approach (PCSAV), prove it unconditional energy stability, and describe an efficient implementation. Section \ref{sec:PCSAV-ECT} introduces a PCSAV method with explicit time-stepping scheme for convection term (PCSAV-ECT). In Section \ref{sec:result}, we present several numerical experiments to verify the accuracy, stability and computational efficiency of two schemes.

\section{PCSAV method for the modified Ericksen-Leslie model}\label{sec:pcsav}
In this section, we propose a linear and decoupled semi-discrete scheme for the modified Ericksen-Leslie model. 
A scalar auxiliary variable is introduced to reformulate the equations \eqref{eq:PEL-1}-\eqref{eq:PEL-3} into an equivalent system, then we apply BDF2 scheme with the pressure correction method to the equivalent system to obtain the semi-discrete scheme.
We prove that the proposed scheme satisfies the energy dissipation law at the discrete level.
In the end, we present a detailed procedure to efficiently implement the scheme. 


Notice that the elastic tensor $\lambda\nabla\cdot\left(\left(\nabla\dd\right)^t\nabla\dd\right)$ of \eqref{eq:PEL-2} can be written \cite{20Lin,32Badia} as
\begin{equation*}
\lambda\nabla\cdot\left(\left(\nabla\dd\right)^t\nabla\dd\right)
=\lambda\nabla\left(\frac{1}{2}|\nabla\dd|^2\right)
-\lambda\left(\nabla\dd\right)^t\left(-\Delta\dd\right).
\end{equation*}
Since $\nabla F_{\varepsilon}=\left(\nabla\dd\right)^t\nabla_{\dd}F_{\varepsilon}=\left(\nabla\dd\right)^tf_\varepsilon(\dd)$, we have

\begin{equation}\label{elastic_tensor}
\lambda\nabla\cdot\left(\left(\nabla\dd\right)^t\nabla\dd\right)
=\lambda\nabla\left(\frac{1}{2}|\nabla\dd|^2+F_{\varepsilon}\right)
-\lambda\left(\nabla\dd\right)^t\left(-\Delta\dd+f_\varepsilon(\dd)\right),
\end{equation}
where $\frac{1}{2}|\nabla\dd|^2+F_{\varepsilon}$ can be incorporated as part of the pressure.
Let $\ww=-\Delta\dd+f_\varepsilon(\dd)$, the modified Ericksen-Leslie model \eqref{eq:PEL-1}-\eqref{eq:PEL-3} can be rewritten as \cite{25Guillen}
\begin{numcases}{}
\frac{\partial \dd}{\partial t}+\uu\cdot\nabla \dd +\gamma\ww=0,\label{eq:EL-1}\\
\ww=-\Delta\dd+f_\varepsilon\left(\dd\right),\label{eq:EL-2}\\
\frac{\partial \uu}{\partial t}+\uu\cdot\nabla \uu-\nu\Delta\uu+\nabla p-\lambda\left(\nabla\dd\right)^t\ww=0,\label{eq:EL-3}\\
\nabla\cdot\uu=0.\label{eq:EL-4}
\end{numcases}
Let $q=\frac{1}{\varepsilon^2}\left(\lvert\dd\rvert^2-1\right)$, then $f_{\varepsilon}\left(\dd\right)=q\dd$ and the total energy is
\begin{equation}\label{energy_q}
W=\frac{1}{2}\lVert \uu\rVert^2
+\frac{\lambda}{2}\lVert\nabla\dd\rVert^2+\lambda\int_\Omega \frac{\varepsilon^2}{4}q^2\;d\bx.
\end{equation}

To handle the nonlinear convection term in the system \eqref{eq:EL-1}-\eqref{eq:EL-4}, we introduce a scalar auxiliary variable 
$s(t)$, defined by
\[s(t)=\exp(-\frac{t}{T}).\]
The system \eqref{eq:EL-1}-\eqref{eq:EL-4} is equivalent to the following system:
\begin{numcases}{}
\frac{\partial \dd}{\partial t}+\frac{s\left(t\right)}{\exp\left(-\frac{t}{T}\right)}\uu\cdot\nabla \dd +\gamma\ww=0,\label{eq:sav-1}\\
\ww=-\Delta\dd+q\dd,\label{eq:sav-2}\\
\frac{\partial q}{\partial t}=\frac{2}{\varepsilon^2}\left(\dd\right)^t\frac{\partial\dd}{\partial t},\label{eq:sav-qt}\\
\frac{\partial \uu}{\partial t}+\left(\uu\cdot\nabla\right)\uu+\frac{1}{2}\left(\nabla\cdot\uu\right)\uu-\nu\Delta\uu+\nabla p=\lambda\frac{s\left(t\right)}{\exp\left(-\frac{t}{T}\right)}\left(\nabla\dd\right)^t \ww,\label{eq:sav-3}\\
\nabla\cdot\uu=0\label{eq:sav-4},\\
\frac{\partial s}{\partial t}=-\frac{s}{T}+\frac{1}{\exp\left(-\frac{t}{T}\right)}\int_\Omega \left(\uu\cdot\nabla\dd\right)\ww-\left(\left(\nabla\dd\right)^t\ww\right)\uu\;d\bx\label{eq:sav-5}.
\end{numcases}
Here, the additional term $\frac{1}{\exp\left(-\frac{t}{T}\right)}\int_\Omega \left(\uu\cdot\nabla\dd\right)\ww-\left(\left(\nabla\dd\right)^t\ww\right)\uu\;d\bx$ in \eqref{eq:sav-5} is identical to zero because $\left(\uu\cdot\nabla\dd,\ww\right)=\left(\left(\nabla\dd\right)^t\ww,\uu\right)$.

Taking the inner products of \eqref{eq:sav-1}-\eqref{eq:sav-3} with $\lambda\ww$, $-\lambda\frac{\partial \dd}{\partial t}$,  $\lambda\frac{\varepsilon^2}{2}q$, $\uu$ respectively, and multiplying 
\eqref{eq:sav-5} with $s$, subject to some algebraic manipulation, we obtain the modified energy law:
$$
\frac{d}{dt}\tilde{W}(\uu,\dd)
=-\nu\lVert \nabla \uu\rVert^2-\lambda\gamma\|\ww\|^2,
$$
where 
$$\tilde{W}(\uu,\dd)=\frac{1}{2}\lVert \uu\rVert^2
+\frac{\lambda}{2}\lVert\nabla\dd\rVert^2+\frac{\lambda\varepsilon^2}{4}\lVert q\rVert^2+\frac{1}{2}s^2$$
represents the corresponding total energy of the system \eqref{eq:sav-1}-\eqref{eq:sav-5}. 

\subsection{PCSAV semi-discrete scheme}
Let $\{t^n| t^n=n\Delta t, n= 0,1,\cdots, N\}$ be a uniform partition in interval $[0,T]$ with the time step $\Delta t=T/N$, where $N$ is a positive integer. The semi-discrete scheme to \eqref{eq:sav-1}-\eqref{eq:sav-5} based on the BDF2 method for temporal discretization is stated as follows.

\begin{sch}\label{2stPCSAV}
Given $\uu^{n-1}$, $\uu^n$, $\dd^{n-1}$, $\dd^{n}$, $p^{n-1}$, $p^n$, $q^{n-1}$, $q^n$, $s^{n-1}$, $s^n$, find $\uu^{n+1}$, $\dd^{n+1}$, $p^{n+1}$, $q^{n+1}$, and $s^{n+1}$ satisfying
\end{sch}
\begin{align}
&\frac{3\dd^{n+1}-4\dd^{n}+\dd^{n-1}}{2\Delta t}+\frac{s^{n+1}}{\exp\left(-\frac{t^{n+1}}{T}\right)} \tilde{\uu}^{n+1}\cdot\nabla \tilde{\dd}^{n+1} +\gamma\ww^{n+1}=0,\label{eq:2stPCSAV2}\\
&\ww^{n+1} = -\Delta\dd^{n+1}+q^{n+1}\tilde{\dd}^{n+1},\label{eq:2weq}\\
&\frac{3q^{n+1}-4q^{n}+q^{n-1}}{2\Delta t}=\frac{2}{\varepsilon^2}\left( \tilde{\dd}^{n+1}\right)^t \frac{3\dd^{n+1}-4\dd^{n}+\dd^{n-1}}{2\Delta t}, \label{eq:2stPCSAV3}\\
&\frac{3\uu_*^{n+1}-4\uu^{n}+\uu^{n-1}}{2\Delta t}+
\left(\tilde{\uu}^{n+1}\cdot\nabla\right)\uu_*^{n+1}+\frac{1}{2}\left(\nabla\cdot\tilde{\uu}^{n+1}\right)\uu_*^{n+1}
-\nu\Delta\uu_*^{n+1}+\nabla p^{n} \nonumber \\
&\qquad\qquad\qquad\qquad\qquad =\lambda\frac{s^{n+1}}{\exp\left(-\frac{t^{n+1}}{T}\right)} \left(\nabla\tilde{\dd}^{n+1}\right)^t \tilde{\ww}^{n+1},\quad\uu_*^{n+1}|_{\partial\Omega}=0,\label{eq:2stPCSAV4}\\
&\frac{3\uu^{n+1}-3\uu_*^{n+1}}{2\Delta t}
+\nabla\left(p^{n+1}-p^n+\nu\nabla\cdot\uu_*^{n+1}\right)=0,\label{eq:2stPCSAV5}\\
&\nabla\cdot\uu^{n+1}=0,\qquad \uu^{n+1}\cdot\nn|_{\partial\Omega}=0,\label{eq:nabu0}\\
&\frac{3s^{n+1}-4s^{n}+s^{n-1}}{2\Delta t}
= -\frac{s^{n+1}}{T} + \frac{1}{\exp\left(-\frac{t^{n+1}}{T}\right)} \int_{\Omega}\left(\tilde{\uu}^{n+1}\cdot \nabla\tilde{\dd}^{n+1}\right)\ww^{n+1}\;d\bx\nonumber\\
&\qquad\qquad\qquad\quad\quad\quad\quad-\frac{1}{\exp\left(-\frac{t^{n+1}}{T}\right)} \int_{\Omega}\left(\uu_*^{n+1}\cdot \nabla\tilde{\dd}^{n+1}\right)\tilde{\ww}^{n+1} \, d\bx,\label{eq:2stPCSAV6}
\end{align}
where $\tilde{\uu}^{n+1}=2\uu^n-\uu^{n-1}$, $\tilde{\dd}^{n+1}=2\dd^n-\dd^{n-1}$, $\tilde{\ww}^{n+1}=2\ww^n-\ww^{n-1}$. 

\begin{remark}
Note that the BDF2 time-stepping method requires initial conditions for the first two time steps. From initial condition \eqref{inidata}, we take 
\begin{equation}\label{inidatadis}
\dd^0=\dd_0, \quad \uu^0=\uu_0,\quad p^0=0,\quad q^0=\frac{1}{\varepsilon^2}\left(\lvert\dd^0\rvert^2-1\right),\quad s^0=\exp\left(-\frac{t^{0}}{T}\right).
\end{equation}
And we obtain $\uu^1, p^1, \dd^1$ by the following scheme:
\begin{align}
&\frac{\dd^{1}-\dd^{0}}{\Delta t}+ \uu^{1}\cdot \nabla \dd^{0}-\gamma\left( \Delta \dd^{1}-f_{\varepsilon}^0(\dd^{1}, \dd^{0})\right)=0,\label{initial01}\\
&\frac{\uu^{1}-\uu^{0}}{\Delta t}+ \uu^{0}\cdot\nabla \uu^{1}-\nu \Delta \uu^{1}+\nabla p^{1} +\frac{\lambda}{\gamma}\left(\nabla\dd^{0}\right)^t\left(\frac{\dd^{1}-\dd^{0}}{\Delta t}+ \uu^{1}\cdot \nabla \dd^{0}\right)=0,\label{initial03}\\
&\nabla \cdot \uu^{1}=0,\label{initial04}
\end{align}
where 
\[f_{\varepsilon}^0(\dd^{1}, \dd^{0})=\frac{1}{\varepsilon^2}|\dd^{1}|^2\dd^{1}-\frac{1}{\varepsilon^2}\dd^0.\]
Furthermore, we take
$$q^1=\frac{1}{\varepsilon^2}\left(\lvert\dd^1\rvert^2-1\right),\quad s^1=\exp\left(-\frac{t^{1}}{T}\right), \quad \ww^1=-\Delta\dd^1+q^1\dd^1.$$

We now prove that the scheme \eqref{initial01}-\eqref{initial04} satisfies the following energy dissipation law
\begin{align}\label{1stenergy}
&W\left(\uu^{1},\dd^{1}\right)-W\left(\uu^{0},\dd^{0}\right)\nonumber
\\
\leq& -\Delta t\nu\lVert\nabla\uu^{1}\rVert^2-\frac{1}{2} \lVert \uu^{1}-\uu^0 \rVert^2
-\frac{\lambda}{2}\lVert \nabla\dd^{1}-\nabla\dd^0 \rVert^2-\frac{\lambda}{2\varepsilon^2}\left(|\dd^1|^2+1\right)\lVert\dd^1-\dd^0\rVert^2.
\end{align}
where 
$$W\left(\uu^{0},\dd^{0}\right)=\frac{\lambda}{2}\lVert\nabla\dd^0\rVert^2+\frac{1}{2}\lVert\uu^0\rVert^2+\lambda\int_\Omega F_\varepsilon(\dd^0)\;d\bx.$$

Let 
\begin{equation}\label{initial02}
\xi=\frac{\dd^{1}-\dd^{0}}{\Delta t}+ \uu^{1}\cdot \nabla \dd^{0},
\end{equation}
taking the inner product of \eqref{initial01} with $\frac{\lambda}{\gamma}(\dd^{1}-\dd^0)$, \eqref{initial02} with $\frac{\lambda}{\gamma}\Delta t\xi$, \eqref{initial03} with $\Delta t\uu^1$, \eqref{initial04} with $-\Delta t p^1$, respectively, and using the identity \begin{equation}\label{a-ba}
(a-b,a)=\frac{1}{2}\left(|a|^2-|b|^2+|a-b|^2\right),
\end{equation} 
we can obtain
\begin{align}\label{1stenergyeq1}
\frac{\lambda}{2}
\left(\lVert\nabla\dd^{1}\rVert^2-\lVert\nabla\dd^{0}\rVert^2+\lVert\nabla\dd^{1}-\nabla\dd^0\rVert^2\right)
+&\frac{\gamma}{\lambda}\Delta t\lVert\xi\rVert^2
+\frac{1}{2}
\left(\lVert\uu^{1}\rVert^2-\lVert\uu^{0}\rVert^2+\lVert\uu^{1}-\uu^0\rVert^2\right)\nonumber\\
&+\Delta t\nu\lVert\nabla\uu^{1}\rVert^2
+\lambda\left(f_\varepsilon^0,\dd^1-\dd^0\right)=0.
\end{align}
Next, we decompose the last term of \eqref{1stenergyeq1} as follows:
\begin{align*}
\lambda\left(f_\varepsilon^0,\dd^1-\dd^0\right)&=\frac{\lambda}{\varepsilon^2}\left(|\dd^{1}|^2\dd^{1}-\dd^0,\dd^1-\dd^0\right)\\
&=\frac{\lambda}{\varepsilon^2}\left(\left(|\dd^{1}|^2-1\right)\dd^{1},\dd^1-\dd^0\right)
+\frac{\lambda}{\varepsilon^2}\left(\dd^1-\dd^0,\dd^1-\dd^0\right)\\
&:=I_1+I_2.
\end{align*}
Rewriting $I_1$ as
\begin{align*}
I_1&=\frac{\lambda}{2\varepsilon^2}\int_{\Omega}\left(|\dd^{1}|^2-1\right)
\left(|\dd^{1}|^2-|\dd^{0}|^2+|\dd^1-\dd^0|^2\right)\;d\bx\\
&=\frac{\lambda}{2\varepsilon^2}\int_{\Omega}\left(|\dd^{1}|^2-1\right)\left(\left(|\dd^{1}|^2-1\right)-\left(|\dd^{0}|^2-1\right)\right)\;d\bx
+\frac{\lambda}{2\varepsilon^2}\int_{\Omega}\left(|\dd^{1}|^2-1\right)\lvert\dd^1-\dd^0\rvert^2\;d\bx\\
&=\frac{\lambda}{4\varepsilon^2}\int_{\Omega}\left(|\dd^{1}|^2-1\right)^2-\left(|\dd^{0}|^2-1\right)^2+\left(|\dd^{1}|^2-|\dd^{0}|^2\right)^2\;d\bx
+\frac{\lambda}{2\varepsilon^2}\int_{\Omega}\left(|\dd^{1}|^2-1\right)\lvert\dd^1-\dd^0\rvert^2\;d\bx
\end{align*}
and 
$I_2=\frac{\lambda}{\varepsilon^2}\lVert\dd^1-\dd^0\rVert^2$, we arrive at the equality
\begin{align}\label{1stenergyeq2}
\lambda\left(f_\varepsilon^0,\dd^1-\dd^0\right)&=\lambda\int_\Omega F_\varepsilon(\dd^1)\;d\bx-
\lambda\int_\Omega F_\varepsilon(\dd^0)\;d\bx\nonumber\\
&+
\frac{\lambda}{2\varepsilon^2}\int_{\Omega}\left(|\dd^{1}|^2+1\right)\lvert\dd^1-\dd^0\rvert^2\;d\bx
+\frac{\lambda}{4\varepsilon^2}\int_\Omega\left(|\dd^{1}|^2-|\dd^{0}|^2\right)^2\;d\bx.
\end{align}
Substituting \eqref{1stenergyeq2} into \eqref{1stenergyeq1}, we obtain \eqref{1stenergy} immediately. 
\end{remark}

\subsection{Discrete energy dissipation property}
The semi-discrete scheme \eqref{eq:2stPCSAV2}-\eqref{eq:2stPCSAV6} satisfies the energy dissipation law at the discrete level. 

\begin{theorem}\label{pcsavthm}
The scheme \eqref{eq:2stPCSAV2}-\eqref{eq:2stPCSAV6} is unconditionally energy stable in the sense that
\begin{equation}\label{2stenergy}
\begin{aligned}
&W^*\left(\uu^{n+1},\dd^{n+1},q^{n+1},p^{n+1},s^{n+1}\right)-W^*\left(\uu^{n},\dd^{n},q^{n},p^{n},s^{n}\right)\\
\leq &-2\Delta t\lambda\gamma\lVert \ww^{n+1}\rVert^2-\nu\Delta t\lVert \nabla\uu_*^{n+1}\rVert^2
-\nu\Delta t \lVert \nabla\times\uu_*^{n+1}\rVert^2
-\frac{2\Delta t}{T}\lambda\lvert s^{n+1}\rvert^2,
\end{aligned}
\end{equation}
where
\begin{align*}
W^*\left(\uu^{n},\dd^{n},q^{n},p^{n},s^{n}\right)=&\frac{1}{2}\lVert\uu^{n}\rVert^2+\frac{1}{2}\lVert2\uu^{n}-\uu^{n-1}\rVert^2+\frac{2}{3}\Delta t^2\lVert\nabla H^{n}\rVert^2
+\nu^{-1}\Delta t\lVert g^{n}\rVert^2\\
&+\frac{\lambda}{2}\lVert\nabla\dd^{n}\rVert^2
+\frac{\lambda}{2}\lVert\nabla\left(2\dd^{n}-\dd^{n-1}\right)\rVert^2\\
&+\frac{\lambda\varepsilon^2}{4}\lVert q^{n}\rVert^2
+\frac{\lambda\varepsilon^2}{4}\lVert 2q^{n}-q^{n-1}\rVert^2\\
&+\frac{\lambda}{2}\lvert s^{n}\rvert^2
+\frac{\lambda}{2}\lvert 2s^{n}-s^{n-1}\rvert^2,
\end{align*}
and $\{g^n,H^n\}$ are defined by
\begin{equation}\label{gh-eq}
g^0=0,\quad g^{n+1}=\nu\nabla\cdot\uu_*^{n+1}+g^n,\quad H^{n+1}=p^{n+1}+g^{n+1},\quad n\ge 0.
\end{equation}
\end{theorem}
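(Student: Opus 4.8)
The plan is to reproduce, at the fully discrete level, the derivation of the modified energy law recorded above, pairing each equation of Scheme \ref{2stPCSAV} with the discrete counterpart of the multiplier used in the continuous computation, and then to fold all the pressure-correction terms into the auxiliary quantities $\{g^n,H^n\}$ of \eqref{gh-eq}.

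First I would form the $L^2$ inner products of \eqref{eq:2stPCSAV2} with $2\Delta t\lambda\ww^{n+1}$, of \eqref{eq:2weq} with $-\lambda(3\dd^{n+1}-4\dd^n+\dd^{n-1})$, of \eqref{eq:2stPCSAV3} with $\lambda\varepsilon^2\Delta t\,q^{n+1}$, and of the momentum predictor \eqref{eq:2stPCSAV4} with $2\Delta t\,\uu_*^{n+1}$, and multiply the $s$-equation \eqref{eq:2stPCSAV6} by $2\Delta t\lambda s^{n+1}$; these are exactly the BDF2-consistent analogues of the continuous multipliers $\lambda\ww$, $-\lambda\partial_t\dd$, $\tfrac{\lambda\varepsilon^2}{2}q$, $\uu$, and $s$. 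The structural backbone is the BDF2 (G-stability) identity
\[
2(3a^{n+1}-4a^n+a^{n-1})\,a^{n+1}=|a^{n+1}|^2+|2a^{n+1}-a^n|^2+|a^{n+1}-2a^n+a^{n-1}|^2-|a^n|^2-|2a^n-a^{n-1}|^2,
\]
which I would apply with $a=\nabla\dd$, $a=q$, $a=s$, and (after the reduction below) $a=\uu$. Each application yields precisely the matching telescoping pair in $W^*$ together with a nonnegative second-difference remainder $\tfrac12\|a^{n+1}-2a^n+a^{n-1}\|^2$, which I discard to pass from the resulting identity to the inequality \eqref{2stenergy}.

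Next I would check the SAV cancellation of all nonlinear terms. The fluid self-transport $\bigl((\tilde\uu^{n+1}\cdot\nabla)\uu_*^{n+1}+\tfrac12(\nabla\cdot\tilde\uu^{n+1})\uu_*^{n+1},\uu_*^{n+1}\bigr)$ vanishes by the skew-symmetry of the rotational trilinear form, using $\uu_*^{n+1}|_{\partial\Omega}=0$. Writing $A=2\Delta t\lambda\,\tfrac{s^{n+1}}{\exp(-t^{n+1}/T)}(\tilde\uu^{n+1}\cdot\nabla\tilde\dd^{n+1},\ww^{n+1})$ and $B=2\Delta t\lambda\,\tfrac{s^{n+1}}{\exp(-t^{n+1}/T)}((\nabla\tilde\dd^{n+1})^t\tilde\ww^{n+1},\uu_*^{n+1})$, the transport term of \eqref{eq:2stPCSAV2} contributes $+A$, the director–stress term of \eqref{eq:2stPCSAV4} contributes $-B$ after moving its right-hand side over, and the two integrals of \eqref{eq:2stPCSAV6} contribute $-A+B$; these sum to zero, which is the discrete manifestation of $(\tilde\uu^{n+1}\cdot\nabla\tilde\dd^{n+1},\ww^{n+1})=((\nabla\tilde\dd^{n+1})^t\ww^{n+1},\tilde\uu^{n+1})$. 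Simultaneously, the coupling $\lambda(q^{n+1}\tilde\dd^{n+1},3\dd^{n+1}-4\dd^n+\dd^{n-1})$ produced by \eqref{eq:2weq} is cancelled exactly by the right-hand side of \eqref{eq:2stPCSAV3}, since both carry the same extrapolant $\tilde\dd^{n+1}$; what survives from \eqref{eq:2stPCSAV2}–\eqref{eq:2stPCSAV3} is the elastic telescoping, the penalty telescoping, and the dissipation $2\Delta t\lambda\gamma\|\ww^{n+1}\|^2$.

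The main obstacle is the pressure-correction reduction in \eqref{eq:2stPCSAV4}–\eqref{eq:nabu0}, which I would organize entirely through $\{g^n,H^n\}$. Using $H^{n+1}-H^n=(p^{n+1}-p^n)+\nu\nabla\cdot\uu_*^{n+1}$, I would recast the projection \eqref{eq:2stPCSAV5} as $3(\uu^{n+1}-\uu_*^{n+1})+2\Delta t\,\nabla(H^{n+1}-H^n)=0$, so that $\uu_*^{n+1}=\uu^{n+1}+\tfrac{2\Delta t}{3}\nabla(H^{n+1}-H^n)$. Substituting this into the predictor's time-difference term and repeatedly using the orthogonality $(\uu^{k},\nabla\varphi)=0$ (valid since $\nabla\cdot\uu^{k}=0$ and $\uu^{k}\cdot\nn|_{\partial\Omega}=0$ at every level), I would reduce $(3\uu_*^{n+1}-4\uu^n+\uu^{n-1},\uu_*^{n+1})$ to the clean BDF2 term $(3\uu^{n+1}-4\uu^n+\uu^{n-1},\uu^{n+1})$ plus the exact surplus $\tfrac{4\Delta t^2}{3}\|\nabla(H^{n+1}-H^n)\|^2$. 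The viscous term splits by the Helmholtz identity $\|\nabla\uu_*^{n+1}\|^2=\|\nabla\times\uu_*^{n+1}\|^2+\|\nabla\cdot\uu_*^{n+1}\|^2$ (valid since $\uu_*^{n+1}|_{\partial\Omega}=0$), producing the two viscous dissipation terms of \eqref{2stenergy} and a residual $\nu\Delta t\|\nabla\cdot\uu_*^{n+1}\|^2$. Finally, combining this residual, the pressure work $-2\Delta t(p^n,\nabla\cdot\uu_*^{n+1})$, and the surplus $\tfrac{4\Delta t^2}{3}\|\nabla(H^{n+1}-H^n)\|^2$, and invoking $g^{n+1}-g^n=\nu\nabla\cdot\uu_*^{n+1}=\tfrac{2\nu\Delta t}{3}\Delta(H^{n+1}-H^n)$, the boundary condition $\partial_{\nn}(H^{n+1}-H^n)|_{\partial\Omega}=0$ inherited from $\uu^{n+1}\cdot\nn=\uu_*^{n+1}\cdot\nn=0$, and $p^n=H^n-g^n$, I would show that all of these collapse into the telescoping energies $\nu^{-1}\Delta t\|g^n\|^2$ and $\tfrac{2}{3}\Delta t^2\|\nabla H^n\|^2$ of $W^*$ plus the nonnegative remainder $\tfrac{2}{3}\Delta t^2\|\nabla(H^{n+1}-H^n)\|^2$. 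Summing the three groups of tested equations, the terms $\pm A$ and $\pm B$ cancel, every telescoping assembles into $W^{*,n+1}-W^{*,n}$, and dropping all nonnegative remainders leaves exactly \eqref{2stenergy}. I expect the bookkeeping of the $g$- and $H$-telescopings in this last step to be the most delicate part of the argument.
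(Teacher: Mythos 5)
Your proposal is correct and follows essentially the same route as the paper: the same choice of test functions ($2\Delta t\lambda\ww^{n+1}$, $-\lambda(3\dd^{n+1}-4\dd^n+\dd^{n-1})$, $\Delta t\lambda\varepsilon^2 q^{n+1}$, $2\Delta t\uu_*^{n+1}$, $2\Delta t\lambda s^{n+1}$), the same BDF2 G-stability identity, the same SAV cancellation of the $\pm A$, $\pm B$ pairs, and the same reduction of the projection step through $\{g^n,H^n\}$ with the Helmholtz splitting of $\|\nabla\uu_*^{n+1}\|^2$. The only difference is cosmetic: you substitute $\uu_*^{n+1}=\uu^{n+1}+\tfrac{2\Delta t}{3}\nabla(H^{n+1}-H^n)$ and expand using $(\uu^k,\nabla\varphi)=0$, whereas the paper squares the rearranged identity $\sqrt{3}\,\uu^{n+1}+\tfrac{2}{\sqrt{3}}\Delta t\nabla H^{n+1}=\sqrt{3}\,\uu_*^{n+1}+\tfrac{2}{\sqrt{3}}\Delta t\nabla H^{n}$; the two computations are algebraically identical (your remainder $\tfrac{2}{3}\Delta t^2\|\nabla(H^{n+1}-H^n)\|^2$ is exactly the paper's discarded $\tfrac{3}{2}\|\uu_*^{n+1}-\uu^{n+1}\|^2$).
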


\begin{proof}
Taking the inner product of \eqref{eq:2stPCSAV2} with $2\Delta t\lambda\ww^{n+1}$, \eqref{eq:2weq} with $-\lambda\left(3\dd^{n+1}-4\dd^{n}+\dd^{n-1}\right)$, \eqref{eq:2stPCSAV3} with $\Delta t\lambda\varepsilon^2q^{n+1}$, respectively, and using the identity 
\begin{equation}\label{ep1}
(3a-4b+c,a)=\frac{1}{2}\left(|a|^2+|2a-b|^2-|b|^2-|2b-c|^2+|a-2b+c|^2\right),
\end{equation}
we can obtain
{\small \begin{align}\label{ep2}
2\Delta t\lambda\gamma\|\ww^{n+1}\|^2&+2\Delta t\lambda\frac{s^{n+1}}{\exp\left(-\frac{t^{n+1}}{T}\right)}\left(\tilde{\uu}^{n+1}\cdot\nabla \tilde{\dd}^{n+1},\ww^{n+1}\right)\\
&+\frac{\lambda}{2}\left(\lVert\nabla\dd^{n+1}\rVert^2+\lVert\nabla\left(2\dd^{n+1}-\dd^n\right)\rVert^2-\lVert\nabla\dd^{n}\rVert^2-\lVert\nabla\left(2\dd^{n}-\dd^{n-1}\right)\rVert^2
\right)\nonumber\\
&+\frac{\lambda\varepsilon^2}{4}\left(\|q^{n+1}\|^2+\|2q^{n+1}-q^n\|^2-\|q^{n}\|^2-\|2q^n-q^{n-1}\|^2\right)\nonumber\\
&+\frac{\lambda}{2}\lVert\nabla\left(\dd^{n+1}-2\dd^n+\dd^{n-1}\right)\rVert^2+\frac{\lambda\varepsilon^2}{4}\|q^{n+1}-2q^n+q^{n-1}\|^2=0.\nonumber
\end{align}}
Taking the inner product of \eqref{eq:2stPCSAV4} with $2\Delta t\uu_*^{n+1}$ leads to
\begin{align}\label{ep3}
\left(3\uu_*^{n+1}-4\uu^{n}+\uu^{n-1},\uu_*^{n+1}\right)&+2\Delta t\nu\Vert \nabla\uu_*^{n+1}\Vert^2+2\Delta t\left(\nabla p^n,\uu_*^{n+1}\right)\\
&= 2\Delta t\lambda\frac{s^{n+1}}{2\exp\left(-\frac{t^{n+1}}{T}\right)}\left(\left(\nabla\tilde{\dd}^{n+1}\right)^t\tilde{\ww}^{n+1},\uu_*^{n+1}\right).\nonumber
\end{align}
Recalling \eqref{ep1} and \eqref{eq:2stPCSAV5}, applying the identity \eqref{a-ba} to the first term on the left-hand side of \eqref{ep3}, we have
{\begin{align}\label{ep32}
\left(3\uu_*^{n+1}-4\uu^{n}+\uu^{n-1},\uu_*^{n+1}\right)&=\left(3\left(\uu_*^{n+1}-\uu^{n+1}\right)+3\uu^{n+1}-4\uu^{n}+\uu^{n-1},\uu_*^{n+1}\right)\nonumber\\
&=3\left(\uu_*^{n+1}-\uu^{n+1},\uu_*^{n+1}\right)+
\left(3\uu^{n+1}-4\uu^{n}+\uu^{n-1},\uu^{n+1}\right)\nonumber\\
&\quad+\left(3\uu^{n+1}-4\uu^{n}+\uu^{n-1},\uu_*^{n+1}-\uu^{n+1}\right)\nonumber\\
&=3\left(\uu_*^{n+1}-\uu^{n+1},\uu_*^{n+1}\right)+
\left(3\uu^{n+1}-4\uu^{n}+\uu^{n-1},\uu^{n+1}\right)\nonumber\\
&\quad+\left(3\uu^{n+1}-4\uu^{n}+\uu^{n-1},\frac{2\Delta t}{3}\nabla\left(p^{n+1}-p^n+\nu\nabla\cdot\uu_*^{n+1}\right)\right)\nonumber\\
&=\frac{3}{2}\left(\|\uu_*^{n+1}\|^2-\|\uu^{n+1}\|^2+\|\uu_*^{n+1}-\uu^{n+1}\|^2\right)+\frac{1}{2}\|\uu^{n+1}\|^2\nonumber\\
&\quad+\frac{1}{2}\|2\uu^{n+1}-\uu^n\|^2
-\frac{1}{2}\|\uu^n\|^2-\frac{1}{2}\|2\uu^{n}-\uu^{n-1}\|^2\nonumber\\
&\quad+\frac{1}{2}\|\uu^{n+1}-2\uu^n+\uu^{n-1}\|^2,
\end{align}}
where $\left(\nabla p^{n+1},\uu^{n+1}\right)=-\left(p^{n+1},\nabla\cdot\uu^{n+1}\right)=0$.

Thanks to \eqref{gh-eq}, we can recast \eqref{eq:2stPCSAV5} as
\begin{align}\label{new2stPCSAV5}
\sqrt{3}\uu^{n+1}+\frac{2}{\sqrt{3}}\Delta t\nabla H^{n+1}=\sqrt{3}\uu_*^{n+1}+\frac{2}{\sqrt{3}}\Delta t\nabla H^{n}.
\end{align}
Taking the inner product of both sides of \eqref{new2stPCSAV5} with itself, we get
\begin{align}\label{ep33}
3\|\uu^{n+1}\|^2&+\frac{4}{3}\Delta t^2\|\nabla H^{n+1}\|^2+4\Delta t\left(\uu^{n+1},\nabla H^{n+1}\right)\\
&=3\|\uu_*^{n+1}\|^2+\frac{4}{3}\Delta t^2\|\nabla H^{n}\|^2+4\Delta t\left(\uu_*^{n+1},\nabla p^{n}\right)+4\Delta t\left(\uu_*^{n+1},\nabla g^{n}\right)\nonumber.
\end{align}
In view of the definition of $g^{n+1}$ in \eqref{gh-eq} and the equation \eqref{a-ba}, we have 
\begin{align}
4\Delta t\left(\uu_*^{n+1},\nabla g^{n}\right)&=-4\Delta t\nu^{-1}\left(g^{n+1}-g^n,g^n\right)\nonumber\\
&=2\Delta t\nu^{-1}\|g^n\|^2-2\Delta t\nu^{-1}\|g^{n+1}\|^2+2\Delta t\nu\|\nabla\cdot\uu_*^{n+1}\|^2.\nonumber
\end{align}
Note that 
$$\|\nabla\times\vv\|^2+\|\nabla\cdot\vv\|^2=\|\nabla\vv\|^2,\quad\forall\vv\in \mathbf{H}_0^1(\Omega),$$
we get
\begin{align}\label{ep34}
4\Delta t\left(\uu_*^{n+1},\nabla g^{n}\right)
=&2\Delta t\nu^{-1}\|g^n\|^2-2\Delta t\nu^{-1}\|g^{n+1}\|^2\\
&+2\Delta t\nu\|\nabla\uu_*^{n+1}\|^2-2\Delta t\nu\|\nabla\times\uu_*^{n+1}\|^2.\nonumber
\end{align}
Substitute \eqref{ep34} into \eqref{ep33}, we obtain 
\begin{align}\label{ep35}
\frac{3}{2}\|\uu^{n+1}\|^2+\frac{2}{3}\Delta t^2\|\nabla H^{n+1}\|^2
=&\frac{3}{2}\|\uu_*^{n+1}\|^2+\frac{2}{3}\Delta t^2\|\nabla H^{n}\|^2
+2\Delta t\left(\uu_*^{n+1},\nabla p^{n}\right)\nonumber\\
&+\Delta t\nu^{-1}\|g^n\|^2-\Delta t\nu^{-1}\|g^{n+1}\|^2\\
&+\Delta t\nu\|\nabla\uu_*^{n+1}\|^2-\Delta t\nu\|\nabla\times\uu_*^{n+1}\|^2\nonumber.
\end{align}
Combining \eqref{ep3} with \eqref{ep32} and \eqref{ep35}, we have
\begin{align}\label{ep5}
\frac{1}{2}\|\uu^{n+1}\|^2&+\frac{1}{2}\|2\uu^{n+1}-\uu^n\|^2+\frac{2}{3}\Delta t^2\|\nabla H^{n+1}\|^2+\Delta t\nu^{-1}\|g^{n+1}\|^2\nonumber\\
&+\frac{3}{2}\|\uu_*^{n+1}-\uu^{n+1}\|^2+\Delta t\nu\Vert \uu_*^{n+1}\Vert^2+\Delta t\nu\|\nabla\times\uu_*^{n+1}\|^2\nonumber\\
&+\frac{1}{2}\|\uu^{n+1}-2\uu^n+\uu^{n-1}\|^2\nonumber\\
=&\frac{1}{2}\|\uu^{n}\|^2+\frac{1}{2}\|2\uu^{n}-\uu^{n-1}\|^2+\frac{2}{3}\Delta t^2\|\nabla H^{n}\|^2+\Delta t\nu^{-1}\|g^{n}\|^2\nonumber\\
&\quad+2\Delta t\lambda\frac{s^{n+1}}{2\exp\left(-\frac{t^{n+1}}{T}\right)}\left(\uu_*^{n+1}\cdot\nabla\tilde{\dd}^{n+1},\tilde{\ww}^{n+1}\right).
\end{align}
Multiplying \eqref{eq:2stPCSAV6} with $2\Delta t\lambda s^{n+1}$ and using \eqref{ep1}, we get 
\begin{align}\label{ep6}
\frac{\lambda}{2}|s^{n+1}|^2&+\frac{\lambda}{2}|2s^{n+1}-s^{n}|^2-\frac{\lambda}{2}|s^{n}|^2-\frac{\lambda}{2}|2s^{n}-s^{n-1}|^2+\frac{\lambda}{2}|s^{n+1}-2s^n+s^{n-1}|^2\nonumber\\
&=-\frac{2\lambda\Delta t}{T}|s^{n+1}|^2+
2\Delta t\lambda\frac{s^{n+1}}{2\exp\left(-\frac{t^{n+1}}{T}\right)} \left(\tilde{\uu}^{n+1}\cdot\nabla\tilde{\dd}^{n+1},\ww^{n+1}\right)\nonumber\\
&\quad-2\Delta t\lambda\frac{s^{n+1}}{2\exp\left(-\frac{t^{n+1}}{T}\right)} \left(\uu_*^{n+1}\cdot\nabla\tilde{\dd}^{n+1},\tilde{\ww}^{n+1}\right).
\end{align}
Adding the equations \eqref{ep2}, \eqref{ep5} and \eqref{ep6} together after some algebraic manipulation, we obtain \eqref{2stenergy} immediately.
This proof is completed.
\end{proof}

\subsection{Implementation of the PCSAV scheme} \label{sec:PCSAV.1}
Scheme \ref{2stPCSAV} is linear but $\uu^{n+1}$ and $\dd^{n+1}$ are coupled with $s^{n+1}$. An effective splitting technique \cite{56Shen} can be applied to decoupled the system \eqref{eq:2stPCSAV2}-\eqref{eq:2stPCSAV6} that allows to solve each variable separately. 

Let
\begin{equation}\label{Kn+1}
K^{n+1}=\frac{s^{n+1}}{\exp\left(-\frac{t^{n+1}}{T}\right)},
\end{equation}
and set
\begin{equation} \label{hatbreve} 
\left\{  
\begin{aligned} 
&\dd^{n+1}=\hat{\dd}^{n+1}+K^{n+1}\breve{\dd}^{n+1},\\&\uu_*^{n+1}=\hat{\uu}_*^{n+1}+K^{n+1}\breve{\uu}_*^{n+1},\\ &\uu^{n+1}=\hat{\uu}^{n+1}+K^{n+1}\breve{\uu}^{n+1},\\
&p^{n+1}=\hat{p}^{n+1}+K^{n+1}\breve{p}^{n+1}.
\end{aligned}  
\right.  
\end{equation} 
From \eqref{eq:2stPCSAV3}, we get
\begin{equation}\label{q2stPCSAV}
q^{n+1}=\frac{4}{3}q^n-\frac{1}{3}q^{n-1}+\frac{2}{3\varepsilon^2} \left(\tilde{\dd}^{n+1}\right)^t \left(3\dd^{n+1}-4\dd^{n}+\dd^{n-1}\right), 
\end{equation}
and \eqref{eq:2weq} becomes
\begin{align}\label{BDFneww}
\ww^{n+1} = &-\Delta\dd^{n+1}+\left(\frac{4}{3}q^n-\frac{1}{3}q^{n-1}\right)\tilde{\dd}^{n+1}\nonumber\\
&+\frac{2}{3\varepsilon^2} \left(\tilde{\dd}^{n+1}\right)^t \left(3\dd^{n+1}-4\dd^{n}+\dd^{n-1}\right)\tilde{\dd}^{n+1}.
\end{align}
Substituting it into \eqref{eq:2stPCSAV2}, we have
\begin{align}
\frac{3\dd^{n+1}-4\dd^{n}+\dd^{n-1}}{2\Delta t}&+\frac{s^{n+1}}{\exp\left(-\frac{t^{n+1}}{T}\right)} \tilde{\uu}^{n+1}\cdot\nabla \tilde{\dd}^{n+1}
-\gamma\Delta\dd^{n+1}+\gamma\left(\frac{4}{3}q^n-\frac{1}{3}q^{n-1}\right)\tilde{\dd}^{n+1}\nonumber\\
&+\frac{2\gamma}{3\varepsilon^2} \left(\tilde{\dd}^{n+1}\right)^t \left(3\dd^{n+1}-4\dd^{n}+\dd^{n-1}\right)\tilde{\dd}^{n+1}
 =0.  \label{eq:2stPCSAV2new} 
\end{align}
Plugging \eqref{hatbreve} in \eqref{eq:2stPCSAV2new}, \eqref{eq:2stPCSAV4}-\eqref{eq:2stPCSAV5}, and collecting terms without $K^{n+1}$, we can obtain $(\hat{\dd}^{n+1}, \breve{\dd}^{n+1})$,  $(\hat{\uu}_*^{n+1}, \breve{\uu}_*^{n+1})$,  $(\hat{\uu}^{n+1}, \breve{\uu}^{n+1})$,  $(\hat{p}^{n+1}, \breve{p}^{n+1})$ and $(\hat{\ww}^{n+1},\breve{\ww}^{n+1})$ separately and individually.

\textbf{Step 1:} Find $(\hat{\dd}^{n+1},\breve{\dd}^{n+1})$ such that
\begin{equation}\label{PCSAV Subproblem 1}
\begin{split}
\frac{3\hat{\dd}^{n+1}-4\dd^n+\dd^{n-1}}{2\Delta t}&-\gamma\left( \Delta \hat{\dd}^{n+1}\right)
+\frac{\gamma}{3}\left(4q^n-q^{n-1}\right)\tilde{\dd}^{n+1}\\
&+\frac{2\gamma}{3\varepsilon^2}\left(\tilde{\dd}^{n+1}\right)^t\left(3\hat{\dd}^{n+1}-4\dd^n+\dd^{n-1}\right)\tilde{\dd}^{n+1}=0,
\end{split}
\end{equation}
\begin{equation}\label{PCSAV Subproblem 2}
\frac{3\breve{\dd}^{n+1}}{2\Delta t}+ \tilde{\uu}^{n+1}\cdot\nabla \tilde{\dd}^{n+1}-\gamma\left( \Delta \breve{\dd}^{n+1}\right)+
\frac{2\gamma}{\varepsilon^2}\left(\left(\tilde{\dd}^{n+1}\right)^t\breve{\dd}^{n+1}\right)\tilde{\dd}^{n+1}=0.
\end{equation}

\textbf{Step 2:} Find $(\hat{\uu}_*^{n+1},\breve{\uu}_*^{n+1})$ such that
\begin{equation}\label{PCSAV Subproblem 3}
\begin{split}
\frac{3\hat{\uu}_*^{n+1}-4\uu^n+\uu^{n-1}}{2\Delta t} + \left(\tilde{\uu}^{n+1}\cdot\nabla\right)\hat{\uu}_*^{n+1}&+ \frac{1}{2}\left(\nabla\cdot \tilde{\uu}^{n+1}\right)\hat{\uu}_*^{n+1}-\nu\Delta\hat{\uu}_*^{n+1}\\
&+\nabla p^n = 0,\quad\hat{\uu}_*^{n+1}|_{\partial\Omega}=0,
\end{split}
\end{equation}
\begin{equation}\label{PCSAV Subproblem 4}
\begin{split}
\frac{3\breve{\uu}_*^{n+1}}{2\Delta t} + \left(\tilde{\uu}^{n+1}\cdot\nabla\right)\breve{\uu}_*^{n+1} &+ \frac{1}{2}\left(\nabla\cdot \tilde{\uu}^{n+1}\right)\breve{\uu}_*^{n+1}
\\
&-\nu\Delta\breve{\uu}_*^{n+1}= \lambda\left(\nabla\tilde{\dd}^{n+1}\right)^t\tilde{\ww}^{n+1},\quad\breve{\uu}_*^{n+1}|_{\partial\Omega}=0.
\end{split}
\end{equation}

\textbf{Step 3:} Find $(\hat{\uu}^{n+1},\breve{\uu}^{n+1})$ and $(\hat{p}^{n+1},\breve{p}^{n+1})$ such that
\begin{equation}\label{PCSAV Subproblem 5}
\left\{\begin{aligned}
&3\hat{\uu}^{n+1}-3\hat{\uu}_*^{n+1}+
2\Delta t\nabla\left(\hat{p}^{n+1}-p^n+\nu\nabla\cdot\hat{\uu}_*^{n+1}\right)
= 0,\\
&\nabla\cdot\hat{\uu}^{n+1}=0,\quad \hat{\uu}^{n+1}\cdot\nn|_{\partial\Omega}=0.
\end{aligned}\right.
\end{equation}
\begin{equation}\label{PCSAV Subproblem 6}
\left\{\begin{aligned}
&3\breve{\uu}^{n+1}-3\breve{\uu}_*^{n+1}+
2\Delta t\nabla\left(\breve{p}^{n+1}+\nu\nabla\cdot\breve{\uu}_*^{n+1}\right)
= 0,\\
&\nabla\cdot\breve{\uu}^{n+1}=0,\quad \breve{\uu}^{n+1}\cdot\nn|_{\partial\Omega}=0.
\end{aligned}\right.
\end{equation}
By taking the divergence operator on each of the above system \eqref{PCSAV Subproblem 5} and \eqref{PCSAV Subproblem 6}, we obtain:
\begin{align}
&\Delta\left(\hat{p}^{n+1}-p^n\right)=\frac{3}{2\Delta t}\nabla\cdot\hat{\uu}_*^{n+1}-\nabla\cdot\nabla\left(\nu\nabla\cdot\hat{\uu}_*^{n+1}\right),\label{BDFphat}\\
&\Delta\breve{p}^{n+1}=\frac{3}{2\Delta t}\nabla\cdot\breve{\uu}_*^{n+1}
-\nabla\cdot\nabla\left(\nu\nabla\cdot\breve{\uu}_*^{n+1}\right).\label{BDFpbreve}
\end{align}
We find that $\hat{p}^{n+1}$ and $\breve{p}^{n+1}$ can be determined by solving a Poisson equation with homogeneous boundary conditions, and then $\hat{\uu}^{n+1}$ and  $\breve{\uu}^{n+1}$ can be updated by
\begin{align}
&\hat{\uu}^{n+1}=\hat{\uu}_*^{n+1}-\frac{2\Delta t}{3}\nabla\left(\hat{p}^{n+1}-p^n\right)
-\frac{2\Delta t}{3}\nabla\left(\nu\nabla\cdot\hat{\uu}_*^{n+1}\right),\label{BDFuhat}\\
&\breve{\uu}^{n+1}=\breve{\uu}_*^{n+1}-\frac{2\Delta t}{3}\nabla\breve{p}^{n+1}
-\frac{2\Delta t}{3}\nabla\left(\nu\nabla\cdot\breve{\uu}_*^{n+1}\right).\label{BDFubreve}
\end{align}

With the decomposition of $\dd^{n+1}$ and \eqref{eq:2stPCSAV2}, $\ww^{n+1}$ can be divided into
\begin{equation}\label{eqw}
\ww^{n+1} = \hat{\ww}^{n+1} + K^{n+1}\breve{\ww}^{n+1},
\end{equation}

\textbf{Step 4:} Find $(\hat{\ww}^{n+1},\breve{\ww}^{n+1})$ such that
\begin{align}
& \hat{\ww}^{n+1} = -\frac{1}{\gamma}\left(\frac{3\hat{\dd}^{n+1}-4\dd^n+\dd^{n-1}}{2\Delta t}\right),\label{2stwhat}\\
& \breve{\ww}^{n+1} = -\frac{1}{\gamma}\left(\frac{3\breve{\dd}^{n+1}}{2\Delta t}+\tilde{\uu}^{n+1}\cdot\nabla\tilde{\dd}^{n+1}\right). \label{2stwbre}
\end{align}

As $\hat{\dd}^{n+1}$, $\breve{\dd}^{n+1}$,  $\hat{\uu}_*^{n+1}$, $\breve{\uu}_*^{n+1}$,  $\hat{\uu}^{n+1}$, $\breve{\uu}^{n+1}$,  $\hat{p}^{n+1}$ and $\breve{p}^{n+1}$ are known, we now derive the expression for $K^{n+1}$. From \eqref{Kn+1}, we have
\begin{align}
s^{n+1}=\exp\left(-\frac{t^{n+1}}{T}\right)K^{n+1}.\label{2stseq}
\end{align}

\textbf{Step 5:} 
Plugging \eqref{2stseq} into \eqref{eq:2stPCSAV6}, and replacing $\ww^{n+1} = \hat{\ww}^{n+1} + K^{n+1}\breve{\ww}^{n+1}$ and $\uu_*^{n+1} = \hat{\uu}_*^{n+1}+K^{n+1}\breve{\uu}_*^{n+1}$, then $K^{n+1}$ can be explicitly determined by the following equation:
\begin{align}\label{2stK^n+1}
A^{n+1}K^{n+1} = B^{n+1} ,
\end{align}
where
\begin{align}\label{An+1}
A^{n+1} = 
\left(\frac{3}{2\Delta t}+\frac{1}{T}\right)\exp\left(-\frac{2t^{n+1}}{T}\right)
&-\left( \tilde{\uu}^{n+1} \cdot \nabla \tilde{\dd}^{n+1}, \breve{\ww}^{n+1}\right)\nonumber\\
&+\left(\breve{\uu}_*^{n+1}\cdot \nabla \tilde{\dd}^{n+1},\tilde{\ww}^{n+1}\right),
\end{align}
and
\begin{align}\label{Bn+1}
B^{n+1} =  \left(\frac{2s^{n}}{\Delta t}-\frac{s^{n-1}}{2\Delta t}\right) \exp\left(-\frac{t^{n+1}}{T}\right)
&+ \left( \tilde{\uu}^{n+1}\cdot \nabla \tilde{\dd}^{n+1},\hat{\ww}^{n+1}\right)\nonumber\\
&- \left(\hat{\uu}_*^{n+1} \cdot \nabla \tilde{\dd}^{n+1},\tilde{\ww}^{n+1}\right) .
\end{align}

In the following, we prove that the linear equation \eqref{2stK^n+1} is uniquely solvable.
Taking the inner product of \eqref{2stwbre} with $\breve{\ww}^{n+1}$, we get
\[
-\left( \tilde{\uu}^{n+1} \cdot \nabla \tilde{\dd}^{n+1} ,\breve{\ww}^{n+1}\right)   
=\gamma\|\breve{\ww}^{n+1}\|^2 + \frac{3}{2\Delta t}(\breve{\dd}^{n+1}, \breve{\ww}^{n+1}).
\]
Substituting \eqref{2stwbre} into \eqref{PCSAV Subproblem 2}, and taking the inner product with $\breve{\dd}^{n+1}$, yields
\[
(\breve{\dd}^{n+1},\breve{\ww}^{n+1}) = \|\nabla\breve{\dd}^{n+1}\|^2 + \frac{2}{\varepsilon^2}\|\breve{\dd}^{n+1}\cdot\tilde{\dd}^{n+1}\|^2.
\]
Combining the above two equations, we obtain
\[
-\left( \tilde{\uu}^{n+1} \cdot \nabla \tilde{\dd}^{n+1} ,\breve{\ww}^{n+1}\right) 
=\gamma\|\breve{\ww}^{n+1}\|^2 + \frac{3}{2\Delta t}\|\nabla\breve{\dd}^{n+1}\|^2 + \frac{3}{\Delta t\varepsilon^2}\|\breve{\dd}^{n+1}\cdot\tilde{\dd}^{n+1}\|^2 \geq 0.
\]
Taking the inner product of \eqref{PCSAV Subproblem 4} with $\frac{\breve{\uu}_*^{n+1}}{\lambda}$, we have
\[
\left(\breve{\uu}_*^{n+1} \cdot \nabla \tilde{\dd}^{n+1} ,\tilde{\ww}^{n+1}\right) = \frac{3}{2\lambda\Delta t}\|\breve{\uu}_*^{n+1}\|^2 + \frac{\nu}{\lambda}\|\nabla\breve{\uu}_*^{n+1}\|^2 \geq 0.
\]
The non-negativity of the above two terms in the right hand side of \eqref{An+1} shows $A^{n+1} > 0$. Hence, $A^{n+1}K^{n+1} + B^{n+1} = 0$ contains a unique solution $K^{n+1}$.	


\section{PCSAV method with explicit scheme for convection term}\label{sec:PCSAV-ECT}
The nonlinear convection term $\uu\cdot\nabla\uu$ in \eqref{eq:EL-3} is discretized using a semi-implicit scheme, resulting in time-dependent coefficient matrices for linear equations of the fully discrete system. 
To make the numerical scheme more efficient, we can treat the convection term explicitly, which leads to a constant coefficient matrix for solving \eqref{eq:EL-3}.

We first reformulate the system \eqref{eq:sav-1}-\eqref{eq:sav-5} to a equivalent system with the scalar auxiliary variable $s(t)$.  
Notice that 
\[
\int_\Omega\uu\cdot\nabla\uu\cdot\uu\;d\bx=\frac{1}{2}\int_\Omega\uu\cdot\nabla|\uu|^2\;d\bx=0,\]
system \eqref{eq:sav-1}-\eqref{eq:sav-5} is equivalent to 
\begin{numcases}{}
\text{Equations}\;\eqref{eq:sav-1}-\eqref{eq:sav-qt} \text{, Equation}\;\eqref{eq:sav-4}, \nonumber\\
\frac{\partial \uu}{\partial t}+\frac{s\left(t\right)}{\exp\left(-\frac{t}{T}\right)}\left(\uu\cdot\nabla\right)\uu-\nu\Delta\uu+\nabla p=\lambda\frac{s\left(t\right)}{\exp\left(-\frac{t}{T}\right)}\left(\nabla\dd\right)^t \ww,\label{eq:ECTPCSAV-NS1}\\
\frac{\partial s}{\partial t}=-\frac{s}{T}+\frac{1}{\exp\left(-\frac{t}{T}\right)}\int_\Omega \left(\uu\cdot\nabla\dd\right)\ww-\left(\left(\nabla\dd\right)^t\ww\right)\uu\;d\bx\label{eq:ECTPCSAV-drdt}\nonumber\\
\qquad\quad\qquad+\frac{1}{\lambda}\frac{1}{\exp\left(-\frac{t}{T}\right)}\int_\Omega\uu\cdot\nabla\uu\cdot\uu\;d\bx.
\end{numcases}

The PCSAV method with an explicit time-stepping scheme for the convection term (PCSAV-ECT) can be stated as follows.
\begin{sch}\label{2stPCSAV-ect}
Given $\uu^{n-1}$, $\uu^n$, $\dd^{n-1}$, $\dd^n$, $p^{n-1}$, $p^n$, $q^{n-1}$, $q^n$, $s^{n-1}$, $s^n$, find $\uu^{n+1}$, $\dd^{n+1}$, $p^{n+1}$, $q^{n+1}$, and $s^{n+1}$ satisfying equation \eqref{eq:2stPCSAV2}-\eqref{eq:2stPCSAV3}, equation \eqref{eq:2stPCSAV5}, equation \eqref{eq:nabu0}, and
\end{sch}
\begin{align}
\frac{3\uu_*^{n+1}-4\uu^{n}+\uu^{n-1}}{2\Delta t}&+\frac{s^{n+1}}{\exp\left(-\frac{t^{n+1}}{T}\right)}\left(\tilde{\uu}^{n+1}\cdot\nabla\right)\tilde{\uu}^{n+1}
-\nu\Delta\uu_*^{n+1}
+\nabla p^{n} \nonumber\\
=& \lambda\frac{s^{n+1}}{\exp\left(-\frac{t^{n+1}}{T}\right)} \left(\nabla\tilde{\dd}^{n+1}\right)^t \tilde{\ww}^{n+1},\quad\uu_*^{n+1}|_{\partial\Omega}=0;\label{eq:2stPCSAV4-ect}\\
\frac{3s^{n+1}-4s^{n}+s^{n-1}}{2\Delta t}
=&-\frac{s^{n+1}}{T} + \frac{1}{\exp\left(-\frac{t^{n+1}}{T}\right)} \int_{\Omega}\left(\tilde{\uu}^{n+1}\cdot \nabla\tilde{\dd}^{n+1}\right)\ww^{n+1}\;d\bx\nonumber\\
&-\frac{1}{\exp\left(-\frac{t^{n+1}}{T}\right)} \int_{\Omega}\left(\uu_*^{n+1}\cdot \nabla\tilde{\dd}^{n+1}\right)\tilde{\ww}^{n+1} \, d\bx\nonumber\\
&+\frac{1}{\lambda}\frac{1}{\exp\left(-\frac{t^{n+1}}{T}\right)}
\int_{\Omega}\left(\tilde{\uu}^{n+1}\cdot \nabla\tilde{\uu}^{n+1}\right)\uu_*^{n+1} \, d\bx,\label{eq:2stPCSAV6-ect}
\end{align}
where $\tilde{\uu}^{n+1}=2\uu^n-\uu^{n-1}$, $\tilde{\dd}^{n+1}=2\dd^n-\dd^{n-1}$, $\tilde{\ww}^{n+1}=2\ww^n-\ww^{n-1}$.

Following the proof procedure of the stability of Scheme \ref{2stPCSAV} in Theorem \ref{pcsavthm}, we can derive the following theorem.

\begin{theorem}\label{pcsav-ectthm} The Scheme \ref{2stPCSAV-ect} is unconditionally energy stable in the sense that
\begin{equation}\label{2stenergy-ect}
\begin{aligned}
&E^*\left(\uu^{n+1},\dd^{n+1},q^{n+1},p^{n+1},s^{n+1}\right)+2\Delta t\lambda M\lVert \ww^{n+1}\rVert^2+\nu\Delta t\lVert \nabla\uu_*^{n+1}\rVert^2
\\
&\qquad\qquad\qquad+\nu\Delta t \lVert \nabla\times\uu_*^{n+1}\rVert^2
+\lambda\frac{2\Delta t}{T}\lvert s^{n+1}\rvert^2
\leq E^*\left(\uu^{n},\dd^{n},q^{n},p^{n},s^{n}\right),
\end{aligned}
\end{equation}
where
\begin{align*}
E^*\left(\uu^{n},\dd^{n},q^{n},p^{n},s^{n}\right)=&\frac{1}{2}\lVert\uu^{n}\rVert^2+\frac{1}{2}\lVert2\uu^{n}-\uu^{n-1}\rVert^2+\frac{2}{3}\Delta t^2\lVert\nabla H^{n}\rVert^2
+\nu^{-1}\Delta t\lVert g^{n}\rVert^2\\
&+\frac{\lambda}{2}\lVert\nabla\dd^{n}\rVert^2
+\frac{\lambda}{2}\lVert\nabla\left(2\dd^{n}-\dd^{n-1}\right)\rVert^2\\
&+\frac{\lambda\varepsilon^2}{4}\lVert q^{n}\rVert^2
+\frac{\lambda\varepsilon^2}{4}\lVert 2q^{n}-q^{n-1}\rVert^2\\
&+\frac{\lambda}{2}\lvert s^{n}\rvert^2
+\frac{\lambda}{2}\lvert 2s^{n}-s^{n-1}\rvert^2.
\end{align*}
\end{theorem}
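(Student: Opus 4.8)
The plan is to follow the proof of Theorem~\ref{pcsavthm} essentially verbatim, exploiting that Scheme~\ref{2stPCSAV-ect} differs from Scheme~\ref{2stPCSAV} only in the momentum equation~\eqref{eq:2stPCSAV4-ect} and the auxiliary-variable equation~\eqref{eq:2stPCSAV6-ect}; the director, $\ww$, $q$, projection, and incompressibility relations \eqref{eq:2stPCSAV2}, \eqref{eq:2weq}, \eqref{eq:2stPCSAV3}, \eqref{eq:2stPCSAV5}, \eqref{eq:nabu0} are untouched. First I would test \eqref{eq:2stPCSAV2}, \eqref{eq:2weq}, \eqref{eq:2stPCSAV3} against $2\Delta t\lambda\ww^{n+1}$, $-\lambda(3\dd^{n+1}-4\dd^{n}+\dd^{n-1})$, and $\Delta t\lambda\varepsilon^2 q^{n+1}$ respectively, and apply the identity~\eqref{ep1}. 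This reproduces \eqref{ep2} word for word: the $\nabla\dd$ and $q$ energy increments, the dissipation $2\Delta t\lambda\gamma\|\ww^{n+1}\|^2$, and the director cross term $2\Delta t\lambda\,\sigma^{n+1}\left(\tilde{\uu}^{n+1}\cdot\nabla\tilde{\dd}^{n+1},\ww^{n+1}\right)$, where I abbreviate $\sigma^{n+1}=s^{n+1}/\exp(-t^{n+1}/T)$.

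Second, I would take the inner product of \eqref{eq:2stPCSAV4-ect} with $2\Delta t\,\uu_*^{n+1}$. The time-difference, viscous, and pressure-gradient terms are identical to those entering \eqref{ep3}, so the entire pressure-correction bookkeeping of \eqref{ep32}--\eqref{ep35} — which uses \eqref{eq:2stPCSAV5}, the auxiliary sequences \eqref{gh-eq}, and the decomposition $\|\nabla\times\vv\|^2+\|\nabla\cdot\vv\|^2=\|\nabla\vv\|^2$ — transfers unchanged and yields the velocity energy increments together with the dissipation $\nu\Delta t\|\nabla\uu_*^{n+1}\|^2+\nu\Delta t\|\nabla\times\uu_*^{n+1}\|^2$ and the elastic forcing cross term in $\left(\uu_*^{n+1}\cdot\nabla\tilde{\dd}^{n+1},\tilde{\ww}^{n+1}\right)$. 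The single new feature is the convection contribution: since it is now discretized fully explicitly as $\sigma^{n+1}(\tilde{\uu}^{n+1}\cdot\nabla)\tilde{\uu}^{n+1}$ rather than in the skew-symmetric form of \eqref{eq:2stPCSAV4}, it is no longer energy neutral and leaves the residual
$$2\Delta t\,\sigma^{n+1}\left((\tilde{\uu}^{n+1}\cdot\nabla)\tilde{\uu}^{n+1},\,\uu_*^{n+1}\right),$$
a term the proof of Theorem~\ref{pcsavthm} never had to handle.

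The crux, which I expect to be the only real step, is that this residual is annihilated exactly by the extra integral engineered into \eqref{eq:2stPCSAV6-ect}. Multiplying \eqref{eq:2stPCSAV6-ect} by $2\Delta t\lambda s^{n+1}$ and invoking \eqref{ep1} reproduces the $s$-energy increments, the dissipative term $-\tfrac{2\Delta t}{T}\lambda|s^{n+1}|^2$, and the two cross terms of \eqref{ep6}; the supplementary integral then contributes precisely
$$2\Delta t\,\sigma^{n+1}\left((\tilde{\uu}^{n+1}\cdot\nabla)\tilde{\uu}^{n+1},\,\uu_*^{n+1}\right),$$
because the factor $1/\lambda$ built into \eqref{eq:2stPCSAV6-ect} is cancelled by the multiplier $\lambda$ and the prefactor $\sigma^{n+1}$ is common to both. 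Adding the three identities, the director cross term cancels the first cross term of the $s$-balance and the elastic forcing cancels the second, exactly as in Theorem~\ref{pcsavthm}; the new convection pair cancels in the same way once the momentum residual and the $s$-equation contribution are collected on opposite sides, since they are identical in magnitude and coefficient. What survives is precisely the stated inequality \eqref{2stenergy-ect}.

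The main obstacle is therefore purely one of careful coefficient matching: confirming that the explicit convection term in the momentum step and the compensating integral in the $s$-step share the identical weight $2\Delta t\,\sigma^{n+1}$ and enter with opposite signs, so that their sum vanishes without generating any leftover. Once this exact compensation is verified, no new sign, positivity, or coupling difficulty appears, and the unconditional energy stability of Scheme~\ref{2stPCSAV-ect} follows by transcription of the estimate already established for Scheme~\ref{2stPCSAV}.
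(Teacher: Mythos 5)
Your proposal is correct and follows exactly the route the paper intends: the paper gives no separate proof of Theorem \ref{pcsav-ectthm}, stating only that it follows the procedure of Theorem \ref{pcsavthm}, and you correctly identify and verify the single new ingredient, namely that the explicit convection residual $2\Delta t\,\sigma^{n+1}\left((\tilde{\uu}^{n+1}\cdot\nabla)\tilde{\uu}^{n+1},\uu_*^{n+1}\right)$ from testing \eqref{eq:2stPCSAV4-ect} with $2\Delta t\,\uu_*^{n+1}$ is cancelled exactly by the extra integral in \eqref{eq:2stPCSAV6-ect} after multiplication by $2\Delta t\lambda s^{n+1}$ (the $1/\lambda$ prefactor being designed for precisely this purpose). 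The only cosmetic remark is that the constant $M$ in \eqref{2stenergy-ect} should read $\gamma$, matching the dissipation term $2\Delta t\lambda\gamma\lVert\ww^{n+1}\rVert^2$ your argument produces.
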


Similar to the implementation of PCSAV algorithm, we apply the same techniques to Scheme \ref{2stPCSAV-ect} to obtain a fully decoupled system.

Plugging \eqref{hatbreve} in \eqref{eq:2stPCSAV2}, \eqref{eq:2stPCSAV4-ect} and \eqref{eq:2stPCSAV5}, and collecting terms without $K^{n+1}$, we can obtain $\hat{\dd}^{n+1}$, $\breve{\dd}^{n+1}$,  $\hat{\uu}_*^{n+1}$, $\breve{\uu}_*^{n+1}$,  $\hat{\uu}^{n+1}$, $\breve{\uu}^{n+1}$,  $\hat{p}^{n+1}$ and $\breve{p}^{n+1}$ as follows:

\textbf{Step 1:} Solve \eqref{PCSAV Subproblem 1} and \eqref{PCSAV Subproblem 2} to obtain $\hat{\dd}^{n+1}$, $\breve{\dd}^{n+1}$.

\textbf{Step 2:} Find $(\hat{\uu}_*^{n+1},\breve{\uu}_*^{n+1})$ such that
\begin{equation}\label{PCSAV-ECT Subproblems 3}
\begin{split}
\frac{3\hat{\uu}_*^{n+1}-4\uu^n+\uu^{n}}{2\Delta t} -\nu\Delta\hat{\uu}_*^{n+1}+\nabla p^n = 0,\;\hat{\uu}_*^{n+1}|_{\partial\Omega}=0,
\end{split}
\end{equation}
\begin{equation}\label{PCSAV-ECT Subproblems 4}
\begin{split}
\frac{3\breve{\uu}_*^{n+1}}{2\Delta t} + \left(\tilde{\uu}^{n+1}\cdot\nabla\right)\tilde{\uu}^{n+1}-\nu\Delta\breve{\uu}_*^{n+1}= \lambda\left(\nabla\tilde{\dd}^{n+1}\right)^t\tilde{\ww}^{n+1},\;\breve{\uu}_*^{n+1}|_{\partial\Omega}=0.
\end{split}
\end{equation}

\textbf{Step 3:} Solve \eqref{BDFphat} and \eqref{BDFpbreve} to get
$\hat{p}^{n+1}$ and $\breve{p}^{n+1}$. Solve \eqref{BDFuhat} and \eqref{BDFubreve} to obtain
$\hat{\uu}^{n+1}$ and $\breve{\uu}^{n+1}$.

\textbf{Step 4:} Solve 
\eqref{2stwhat} and \eqref{2stwbre} to get
$\hat{\ww}^{n+1}$ and $\breve{\ww}^{n+1}$.

\textbf{Step 5:} Taking \eqref{2stseq} into \eqref{eq:2stPCSAV6-ect}, and replacing $\ww^{n+1}$, $\uu_*^{n+1}$ with \eqref{hatbreve}, 
one can get $K^{n+1}$ by the following equation:
\begin{align}\label{2stK^n+1-ect}
A^{n+1}K^{n+1} = B^{n+1} ,
\end{align}
where
\begin{align}\label{2stAn+1-ect}
A^{n+1} =&  
\left(\frac{3}{2\Delta t}+\frac{1}{T}\right)\exp\left(-\frac{2t^{n+1}}{T}\right) 
-\left( \tilde{\uu}^{n+1} \cdot \nabla \tilde{\dd}^{n+1}, \breve{\ww}^{n+1}\right)\nonumber\\
&+\left(\breve{\uu}_*^{n+1}\cdot \nabla \tilde{\dd}^{n+1},\tilde{\ww}^{n+1}\right)
-\frac{1}{\lambda}\left(\tilde{\uu}^{n+1} \cdot \nabla \tilde{\uu}^{n+1} ,\breve{\uu}_*^{n+1}\right)
,
\end{align}
\begin{align}
B^{n+1} =&  \left(\frac{2s^{n}}{\Delta t}-\frac{s^{n-1}}{2\Delta t}\right) \exp\left(-\frac{t^{n+1}}{T}\right)
+ \left( \tilde{\uu}^{n+1}\cdot \nabla \tilde{\dd}^{n+1},\hat{\ww}^{n+1}\right)\nonumber\\
&- \left(\hat{\uu}_*^{n+1} \cdot \nabla \tilde{\dd}^{n+1},\tilde{\ww}^{n+1}\right)
+\frac{1}{\lambda}\left(\tilde{\uu}^{n+1} \cdot \nabla \tilde{\uu}^{n+1} ,\hat{\uu}_*^{n+1}\right).
\end{align}

\section{Numerical Experiments}\label{sec:result}
In this section, we carry out several numerical tests to show the accuracy and stability of the proposed PCSAV Scheme \ref{2stPCSAV} and PCSAV-ECT Scheme \ref{2stPCSAV-ect}. For the spatial discretization, we use the Taylor-Hood element pair $P^2-P^1$ to solve the system, i.e., the director field $\dd$ and velocity $\uu$ are approximated with continuous $P^2$ finite element and the pressure $p$ is approximated with continuous $P^1$ finite element. Furthermore, we compare the CPU time between PCSAV and PCSAV-ECT scheme to verify the superior computational efficiency of PCSAV-ECT scheme.

\begin{example}\label{621}
In this example, we prescribe the right-hand sides of equations \eqref{eq:EL-1} and \eqref{eq:EL-3} such that the following functions be an exact solution
\begin{numcases}{}
\dd=\left(\cos\left(a\right),\sin\left(a\right)\right)^t,\quad\text{where}\;a:=0.25\pi\left(1-\cos\left(2\pi x\right)\right)
+\pi t,\nonumber\\
\uu=\left(\sin(\pi x)^2\sin(2\pi y)\sin(t),-\sin(2\pi x)\sin(\pi y)^2\sin(t)\right)^t,\nonumber\\
p=\left(xy-0.25\right)\cos(\pi t),\nonumber
\end{numcases}
in $\Omega = [0,1]^2$, with parameters $\nu=0.01, \lambda=0.1$, $\gamma=1$, $\varepsilon=0.01$ and $T=0.2$.
\end{example} 

For the temporal convergence analysis, the spatial grid was fixed at $h = 1/200$, and the time step was successively halved from an initial value of $0.05$. The $L^2$ norm errors between the numerical and exact solutions were computed for the velocity field $\uu$, director field $\dd$, and pressure $p$.
As shown in Tables \ref{tab:PCSAV-temporal-convergence} and \ref{tab:PCSAV-ect-temporal-convergence}, all three variables exhibit second-order accuracy in time.
To further verify the spatial accuracy of the proposed schemes, the time step was fixed at $\Delta t = 0.0001$, and the spatial mesh was successively refined from an initial size of $h = 1/20$ by halving at each refinement level.
The expected convergence rates are of second order for $\dd$ in the $H^1$ semi-norm, $\uu$ in the $H^1$ semi-norm, and $p$ in the $L^2$ norm.
The numerical results, summarized in Table \ref{tab:PCSAV-spatial-convergence} and \ref{tab:PCSAV-ect-spatial-convergence}, confirm that the proposed schemes achieve second-order accuracy in spatial discretization.

\begin{table}[htbp]
\centering
\caption{\noindent
Example \ref{621}, temporal convergence test by the PCSAV Scheme.}
\label{tab:PCSAV-temporal-convergence}
\begin{tabular}{|c||c|c||c|c||c|c|}
\hline
$\Delta t$ 
& $\Vert \dd(t^{n+1})-\dd^{n+1}\Vert$ & rate 
& $\Vert \uu(t^{n+1})-\uu^{n+1}\Vert$ & rate 
& $\Vert p(t^{n+1})-p^{n+1}\Vert$ & rate \\ 
\hline
$0.05$ & 0.002112271486 & --- & 0.0004804149703 & --- & 0.007190496554 & --- \\ 
$0.025$ & 0.0005320806841 & 1.99 & 0.000127411606 & 1.91 & 0.002995586478 & 1.26 \\ 
$0.0125$ & 0.00012842493 & 2.05 & 3.351759872e-05 & 1.93 & 0.0007849692449 & 1.93 \\ 
$0.00625$ & 3.153682278e-05 & 2.03 & 8.764377154e-06 & 1.94 & 0.0001905359879 & 2.04 \\ 
\hline
\end{tabular}
\end{table}

\begin{table}[htbp]
\centering
\caption{\noindent
Example \ref{621}, temporal convergence test by the PCSAV-ECT Scheme.}
\label{tab:PCSAV-ect-temporal-convergence}
\begin{tabular}{|c||c|c||c|c||c|c|}
\hline
$\Delta t$ 
& $\Vert \dd(t^{n+1})-\dd^{n+1}\Vert$ & rate 
& $\Vert \uu(t^{n+1})-\uu^{n+1}\Vert$ & rate 
& $\Vert p(t^{n+1})-p^{n+1}\Vert$ & rate \\ 
\hline
$0.05$ & 0.002112274066 & --- & 0.0004826303364 & --- & 0.007229155041 & --- \\ 
$0.025$ & 0.0005320803668 & 1.99 & 0.0001274986405 & 1.92 & 0.003002408943 & 1.27 \\ 
$0.0125$ & 0.0001284248925 & 2.05 & 3.350616432e-05 & 1.93 & 0.0007869641031 & 1.93 \\ 
$0.00625$ & 3.153682042e-05 & 2.03 & 8.757893628e-06 & 1.94 & 0.0001911005545 & 2.04 \\ 
\hline
\end{tabular}
\end{table}

\begin{table}[htbp]
\centering
\caption{\noindent
Example \ref{621}, spatial convergence test by the PCSAV Scheme.}
\label{tab:PCSAV-spatial-convergence}
\begin{tabular}{|c||c|c||c|c||c|c|}
\hline
$h$ 
& $\Vert \nabla\dd(t^{n+1})-\nabla\dd^{n+1}\Vert$ & rate 
& $\Vert \nabla\uu(t^{n+1})-\nabla\uu^{n+1}\Vert$ & rate 
& $\Vert p(t^{n+1})-p^{n+1}\Vert$ & rate \\ 
\hline
$\frac{1}{20}$ & 0.02390803842 & --- & 0.009717436416 & --- & 0.0003171025928 & --- \\ 
$\frac{1}{40}$ & 0.006020579636 & 1.99 & 0.001817095916 & 2.42 & 8.045013385e-05 & 1.98 \\ 
$\frac{1}{80}$ & 0.00150892181 & 2.00 & 0.0004189245682 & 2.12 & 2.018082949e-05 & 2.00 \\ 
$\frac{1}{160}$ & 0.000377602901 & 2.00 & 0.0001025831416 & 2.03 & 5.021425135e-06 & 2.01 \\ 
\hline
\end{tabular}
\end{table}

\begin{table}[htbp]
\centering
\caption{\noindent
Example \ref{621}, spatial convergence test by the PCSAV-ECT Scheme.}
\label{tab:PCSAV-ect-spatial-convergence}
\begin{tabular}{|c||c|c||c|c||c|c|}
\hline
$h$ 
& $\Vert \nabla\dd(t^{n+1})-\nabla\dd^{n+1}\Vert$ & rate 
& $\Vert \nabla\uu(t^{n+1})-\nabla\uu^{n+1}\Vert$ & rate 
& $\Vert p(t^{n+1})-p^{n+1}\Vert$ & rate \\ 
\hline
$\frac{1}{20}$ & 0.02390803842 & --- & 0.00972741342 & --- & 0.0003175228991 & --- \\ 
$\frac{1}{40}$ & 0.006020579635 & 1.99 & 0.001817704732 & 2.42 & 8.051509003e-05 & 1.98 \\ 
$\frac{1}{80}$ & 0.00150892181 & 2.00 & 0.000418960855 & 2.12 & 2.019660692e-05 & 2.00 \\ 
$\frac{1}{160}$ & 0.0003776029009 & 2.00 & 0.0001025838076 & 2.03 & 5.025226243e-06 & 2.01 \\ 
\hline
\end{tabular}
\end{table}

\begin{example}\label{62}
In this example, we apply the PCSAV Scheme \ref{2stPCSAV} and PCSAV-ECT Scheme \ref{2stPCSAV-ect} for numerically solving the modified Ericksen-Leslie model \eqref{eq:PEL-1}-\eqref{eq:PEL-3} with $\Omega = [-1,1]^2$, $\nu=0.1, \lambda=1, \gamma=1$, and the initial condition:
\begin{numcases}{}
\dd_0=\left(\sin\left(a\right),\cos\left(a\right)\right)^t,\quad\text{where}\;a:=2.0\pi\left(\cos\left(x\right)-\sin\left(y\right)\right)\nonumber\\
\uu_0=\left(0,0\right)^t,\nonumber\\
p_0=0.\nonumber
\end{numcases}
\end{example} 

We verify the convergence rate of the numerical schemes by approximating the smooth solutions of \eqref{eq:sav-2}-\eqref{eq:sav-5} under the same initial conditions in \cite{24Becker}. 
We run the numerical tests on four successively refined meshes with mesh size $h=\frac{2\sqrt{2}}{5\times 2^{l-1}}, l=1,..,4$. 
We also take a
constant ratio of the time step and mesh size so that 
$\Delta t = \frac{0.005}{2\sqrt{2}}h$. 
Then we calculate the Cauchy difference of the solutions at two successive levels, namely $v^{n+1}_{l+1}-v^{n+1}_{l}$ at $t^{n+1}=(n+1)\Delta t=0.1$, where $v= \dd_1, \dd_2, \uu_1, \uu_2$, $l=1,2,3$. 
The Cauchy error results computed by the PCSAV scheme are listed in Table \ref{tab:PROSAV}. 
The results demonstrate that the PCSAV scheme exhibits second-order convergence in time.
For the PCSAV-ECT scheme, the Cauchy error result listed in Table \ref{tab:PROSAV-ECT}, also showing second order in time convergent.

\begin{table}[tp]
\centering
{\small
\caption{\noindent
Example \ref{62}, Cauchy difference of numerical solutions calculated by the PCSAV Scheme \ref{2stPCSAV}.}
\label{tab:PROSAV}
\begin{tabular}{|c||c|c||c|c||c|c|}
\hline
$l$& $\Vert \nabla \dd^{n+1}_{l+1}-\nabla \dd^{n+1}_{l}\Vert$ & rate&  $\Vert \nabla \uu^{n+1}_{l+1}-\nabla \uu^{n+1}_{l}\Vert$ & rate& $\Vert p^{n+1}_{l+1}-p^{n+1}_{l}\Vert$ & rate \\
\hline
1  &0.4164495743  &   --- & 2.700120977  &   --- & 0.692172455 &   ---  \\
2  & 0.09817322033 &  2.08  &  0.8160214813 &   1.73  & 0.2014638903 &  1.78  \\
3  & 0.02446193817  & 2.00  & 0.1571174499	 &   2.38  & 0.05370746945 &  1.91  \\
4  &  0.006176025916 & 1.99 &  0.03627691036 &   2.11  & 0.01420654234 &   1.92  \\
\hline
\end{tabular}
}
\end{table}

\begin{table}[tp]
\centering
{\small
\caption{\noindent
Example \ref{62}, Cauchy difference of numerical solutions calculated by the PCSAV-ECT Scheme \ref{2stPCSAV-ect}.}
\label{tab:PROSAV-ECT}
\begin{tabular}{|c||c|c||c|c||c|c|}
\hline
$l$& $\Vert \nabla \dd^{n+1}_{l+1}-\nabla \dd^{n+1}_{l}\Vert$ & rate&  $\Vert \nabla \uu^{n+1}_{l+1}-\nabla \uu^{n+1}_{l}\Vert$ & rate& $\Vert p^{n+1}_{l+1}-p^{n+1}_{l}\Vert$ & rate \\
\hline
1  &0.4162841217  &   --- & 2.600968688 &   --- & 0.6978122543 &   ---  \\
2  & 0.09806612727 &  2.09  &  0.7724048347 &   1.75  & 	0.1922718919 &  1.86  \\
3  & 0.02445782532  & 2.00  & 0.1558344576 &   2.31  & 0.05102810161 &  1.91  \\
4  & 0.006175426523 & 1.99 &  0.0365693909 &   2.09  & 0.01394458104 &   1.87  \\
\hline
\end{tabular}
}
\end{table}

We perform the numerical simulations by PCSAV and PCSAV-ECT schemes with a time step size of $\Delta t=0.0025$.
Fig. \ref{fig:smooth_dir} and Fig. \ref{fig:smooth_u} show the evolution of the director field $\dd$ and velocity $\uu$ by PCSAV scheme, respectively.
Similarly, Fig. \ref{fig:NS-smooth_dir} and Fig. \ref{fig:NS-smooth_u} depict the profile state of the director field $\dd$ and velocity field $\uu$ at different times obtained by PCSAV-ECT scheme. 
Notably, the results of the PCSAV-ECT scheme are in perfect agreement with those of the PCSAV scheme, confirming their equivalence. 
Therefore, in the following numerical experiments, we only present the results of the PCSAV-ECT scheme.

To verify the energy stability of the PCSAV and PCSAV-ECT schemes, we present the time evolution of the kinetic energy $W_{kin}$, elastic energy $W_{ela}$, penalty energy $W_{pen}$, modified total energy $\tilde{W}$, and original energy $W_s$ with various penalty parameters $\varepsilon=0.2, 0.1, 0.05, 0.025$, as shown in Fig. \ref{fig:pcsav_energy} and \ref{fig:nspcsav_energy}.
We observe that both $\tilde{W}$ and $W_s$ exhibit a monotonically decreasing trend for all values of $\varepsilon$, indicating that the proposed schemes consistently preserve the energy dissipation property.
Moreover, as the penalty parameter $\varepsilon$ increases, the contribution of the penalty energy $W_{pen}$ becomes more significant, while the kinetic energy $W_{kin}$ is correspondingly diminished. 
This indicates that a large penalty parameter may overly constrain the system, thereby affecting the dynamic behavior and resulting in a loss of physical fidelity.
In contrast, when $\varepsilon$ is small, the modified energy $\tilde{W}$ is closer to the original energy $W_s$, which means the modified model \eqref{eq:PEL-1}-\eqref{eq:PEL-3} better approximates the original model \eqref{eq:SEL-1}-\eqref{eq:SEL-4}. This agrees with the theoretical expectation that as $\varepsilon\to0$, the modified model converges to the original one.
Overall, the results confirm that the PCSAV and PCSAV-ECT schemes maintain energy stability and performs well for a wide range of $\varepsilon$.

Furthermore, Fig. \ref{fig:diffdt_energy} presents the energy evolution of the PCSAV and PCSAV-ECT schemes under different time step sizes. It can be clearly observed that the modified total energy consistently exhibits monotonic decay, further strongly confirming the unconditional stability of the proposed schemes.

\begin{figure}[tp]
\begin{center}
\includegraphics[scale=0.24]{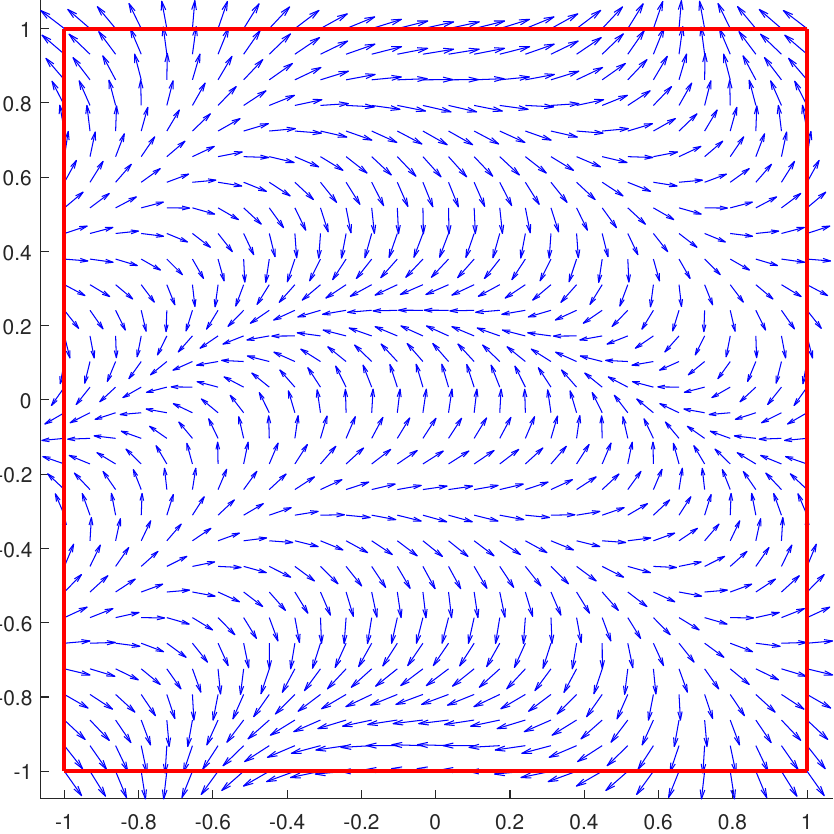}
\includegraphics[scale=0.24]{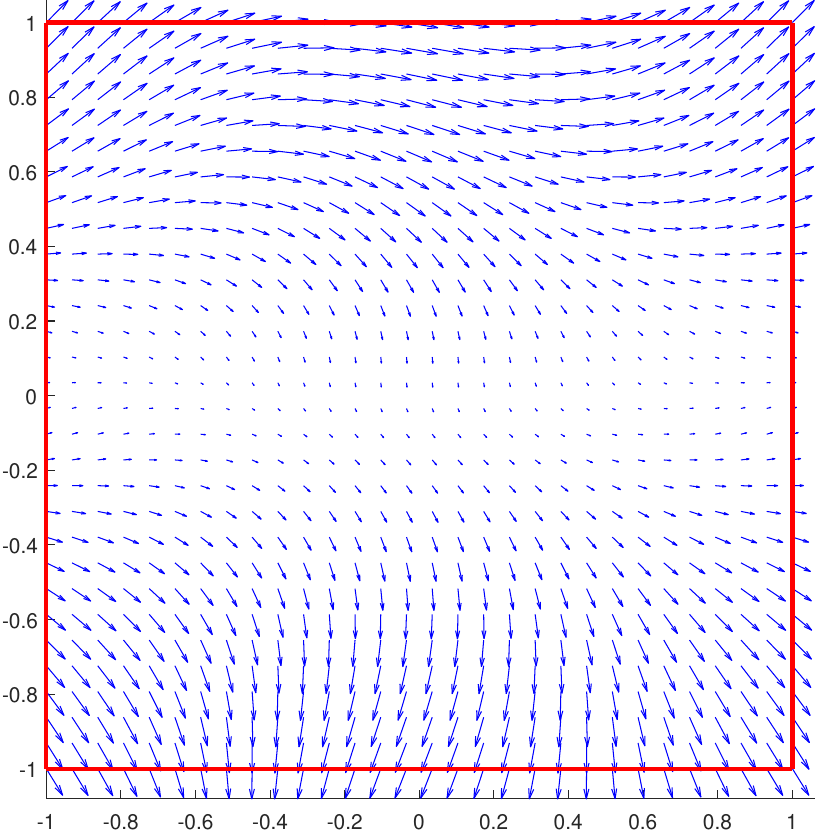}
\includegraphics[scale=0.24]{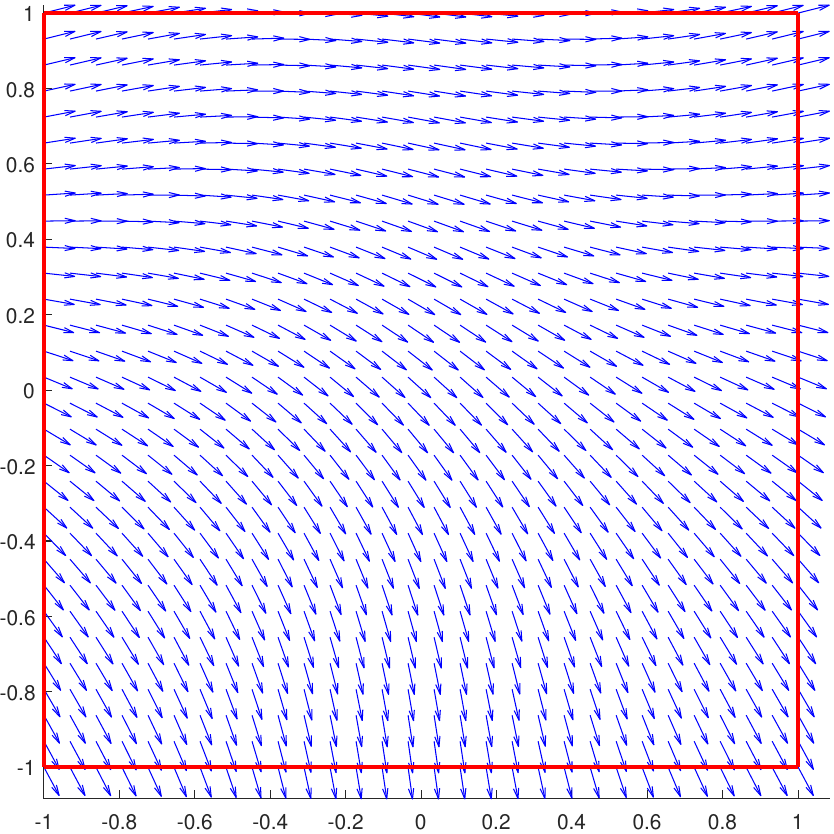}
\includegraphics[scale=0.24]{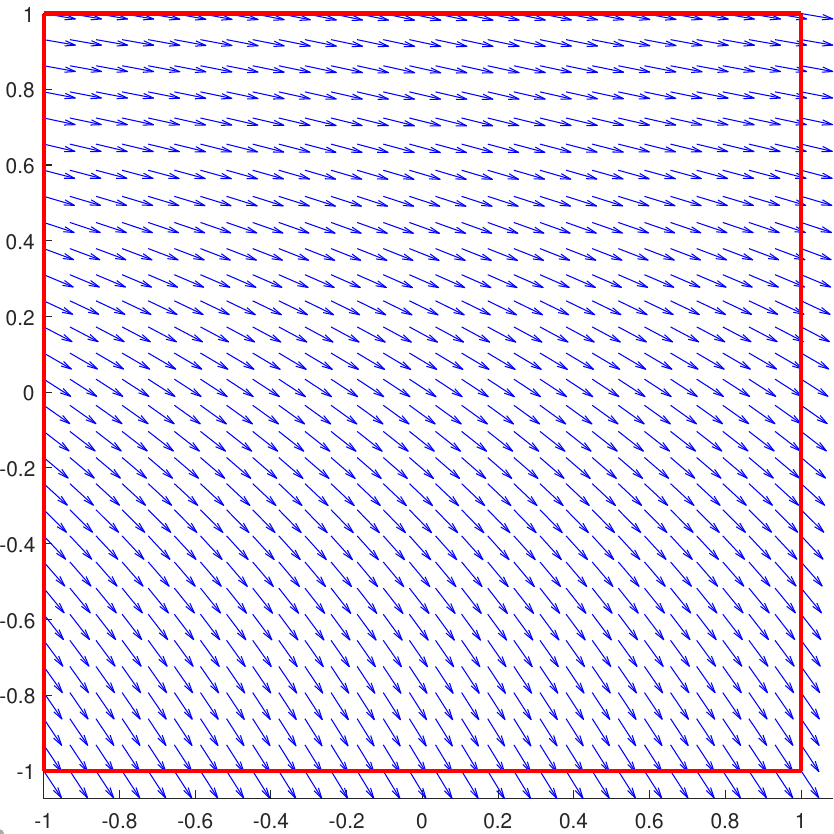}

\includegraphics[scale=0.24]{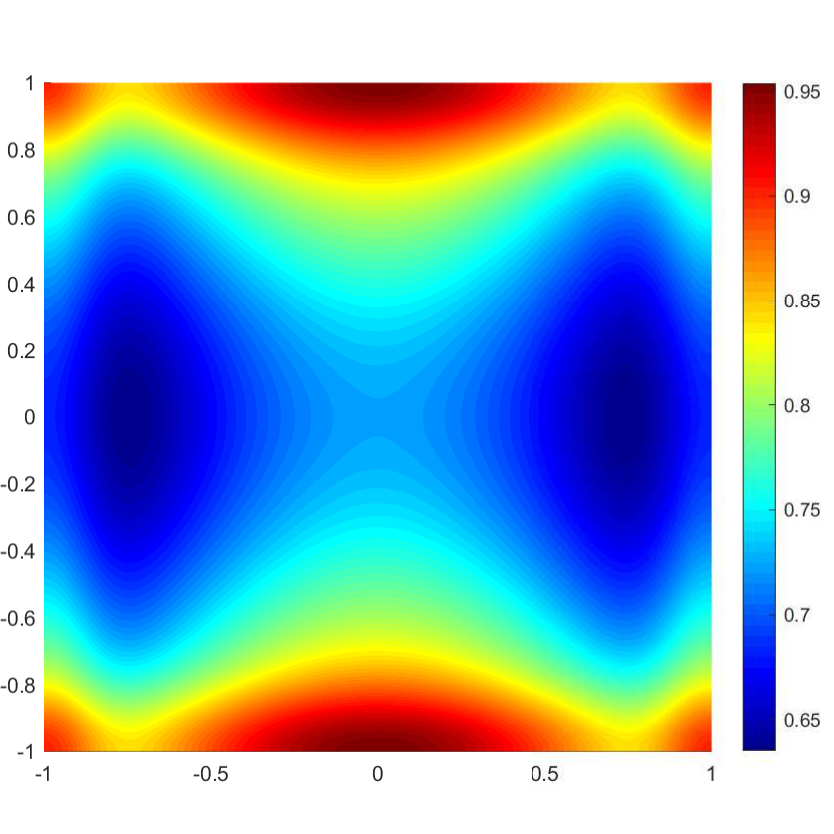}
\includegraphics[scale=0.24]{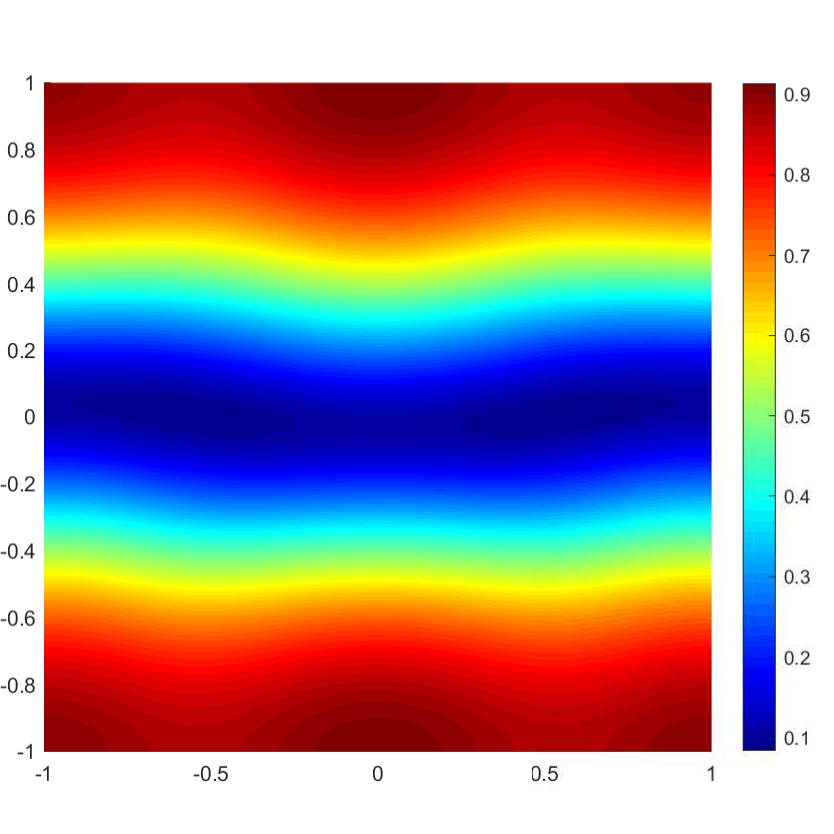}
\includegraphics[scale=0.24]{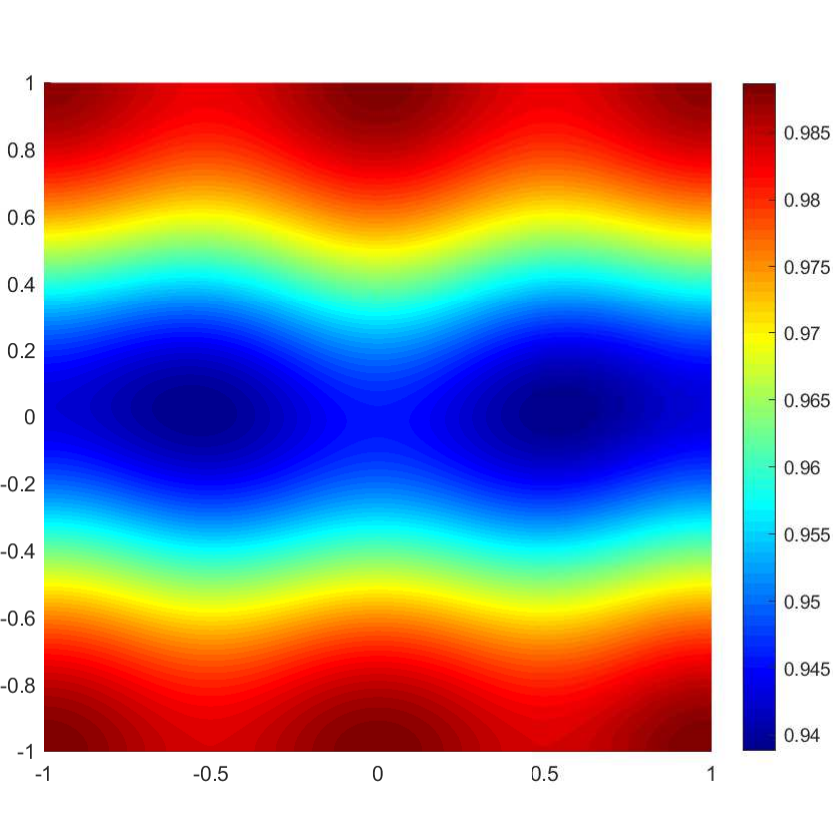}
\includegraphics[scale=0.24]{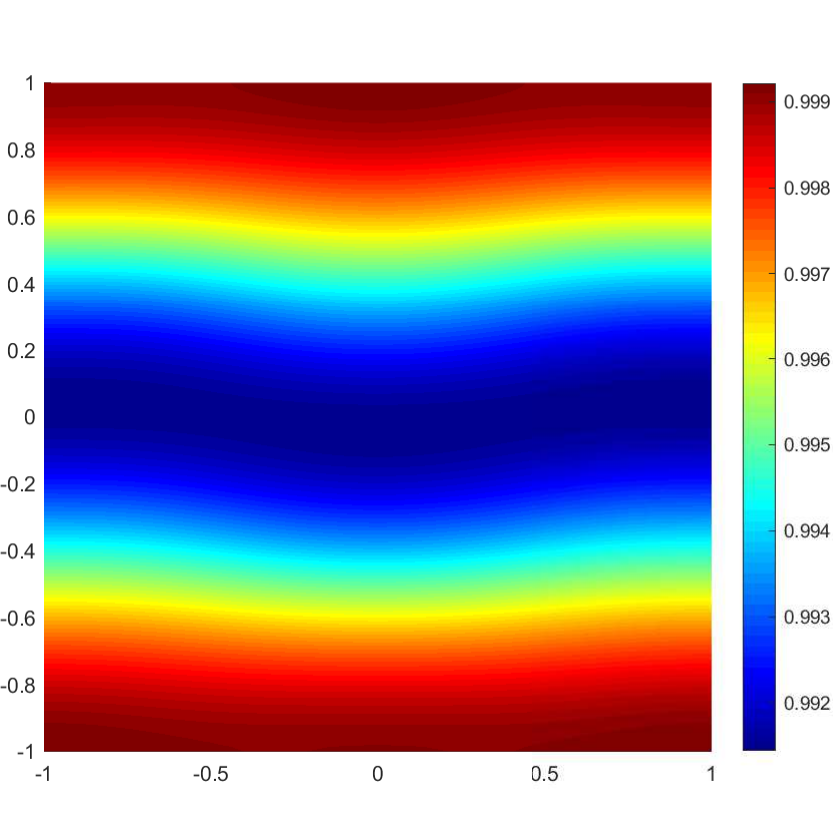}
\caption{Example \ref{62}, images of the director field $\dd$ (first row) and $\lvert\dd\rvert$ (second row) at $t=0.01, 0.1, 0.2, 0.4$ computed by PCSAV scheme.}
\label{fig:smooth_dir}
\end{center}
\end{figure}
\begin{figure}[tp]
\begin{center}
\includegraphics[scale=0.24]{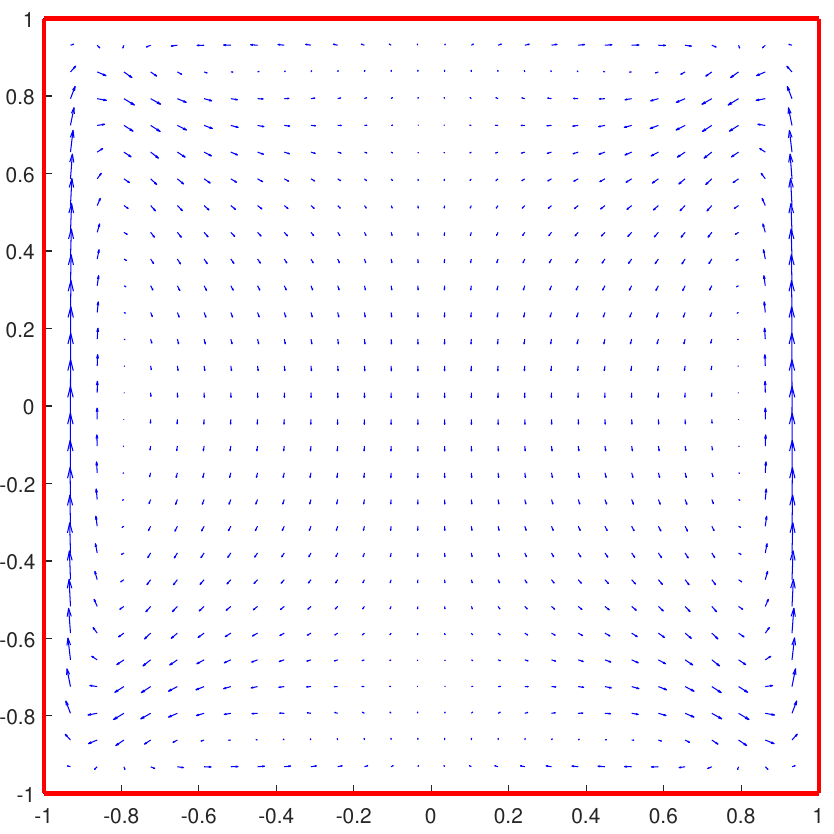}
\includegraphics[scale=0.24]{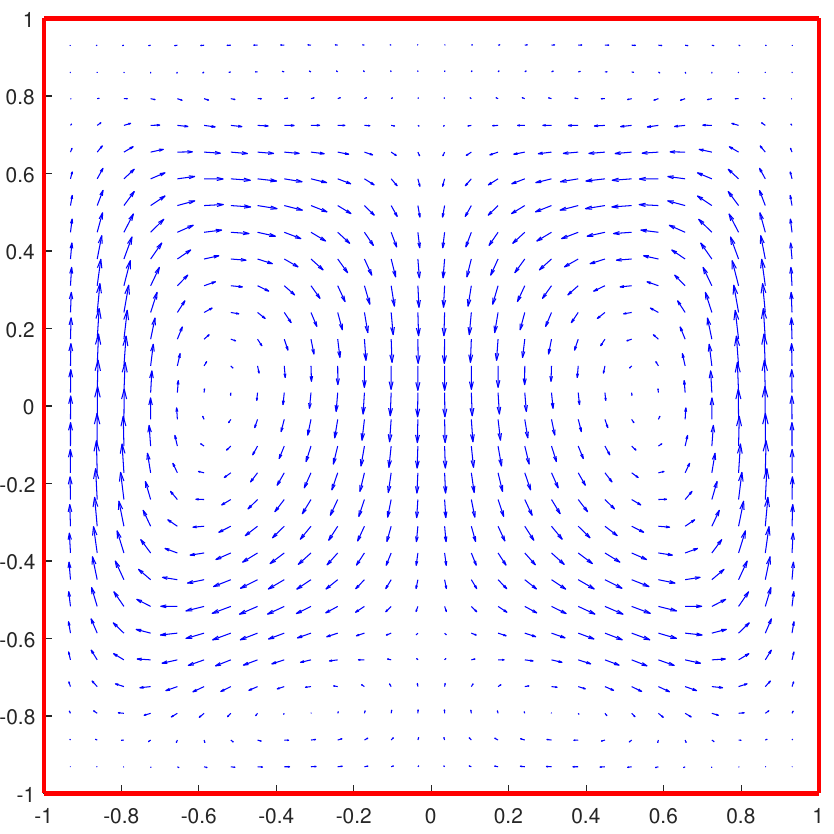}
\includegraphics[scale=0.24]{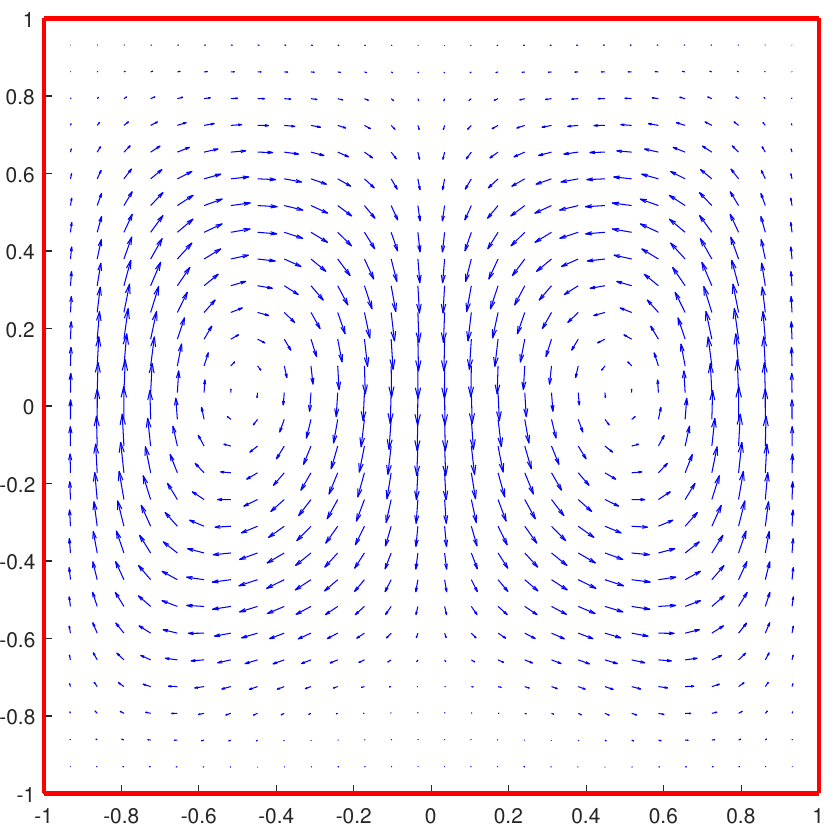}
\includegraphics[scale=0.24]{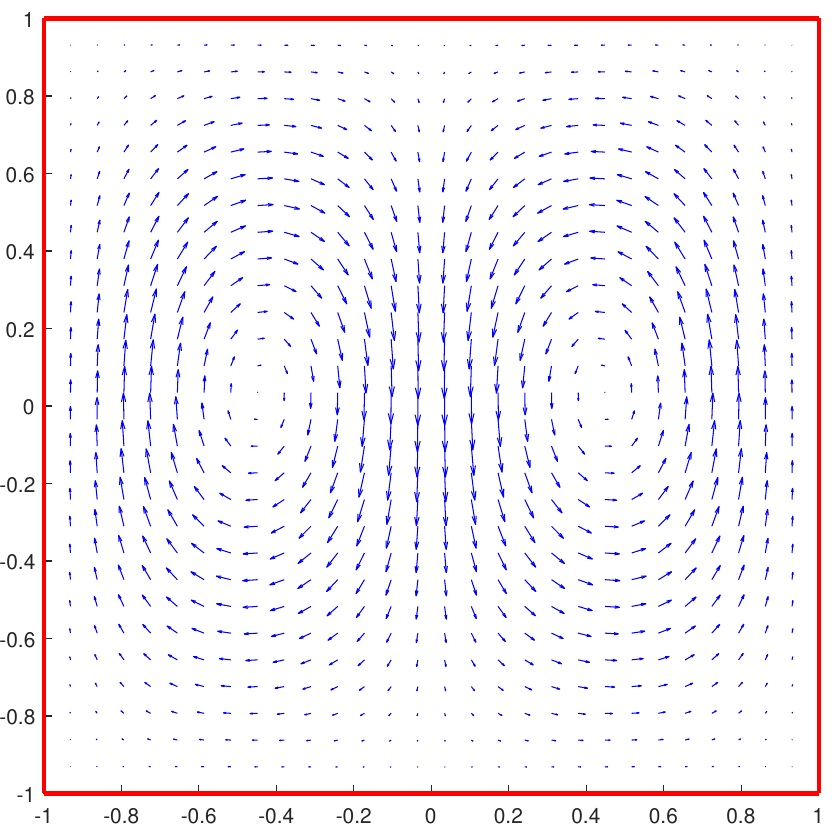}

\includegraphics[scale=0.24]{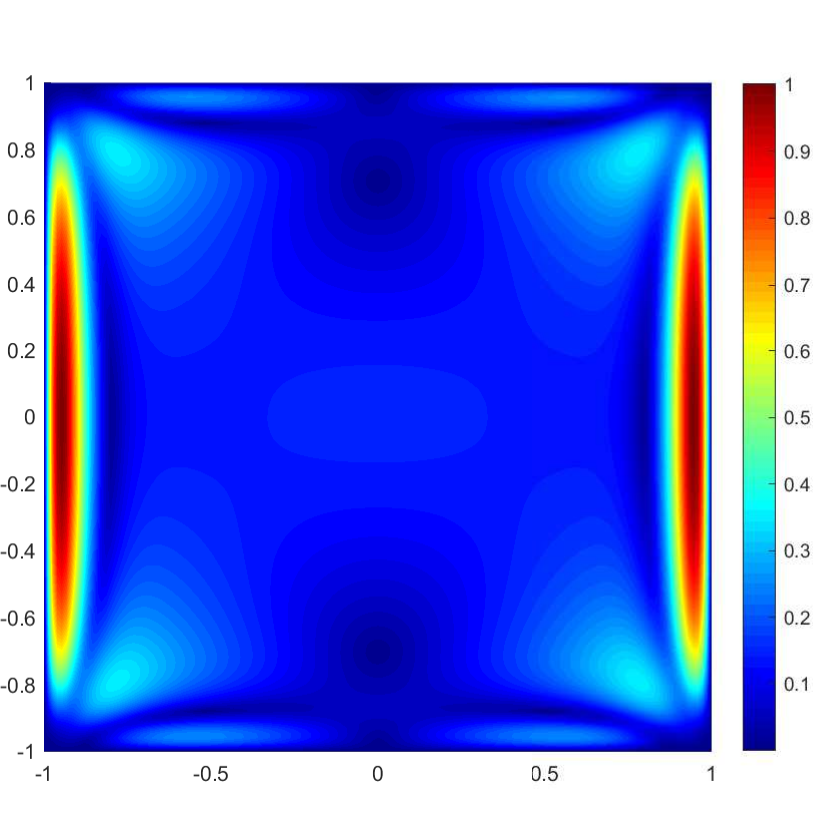}
\includegraphics[scale=0.24]{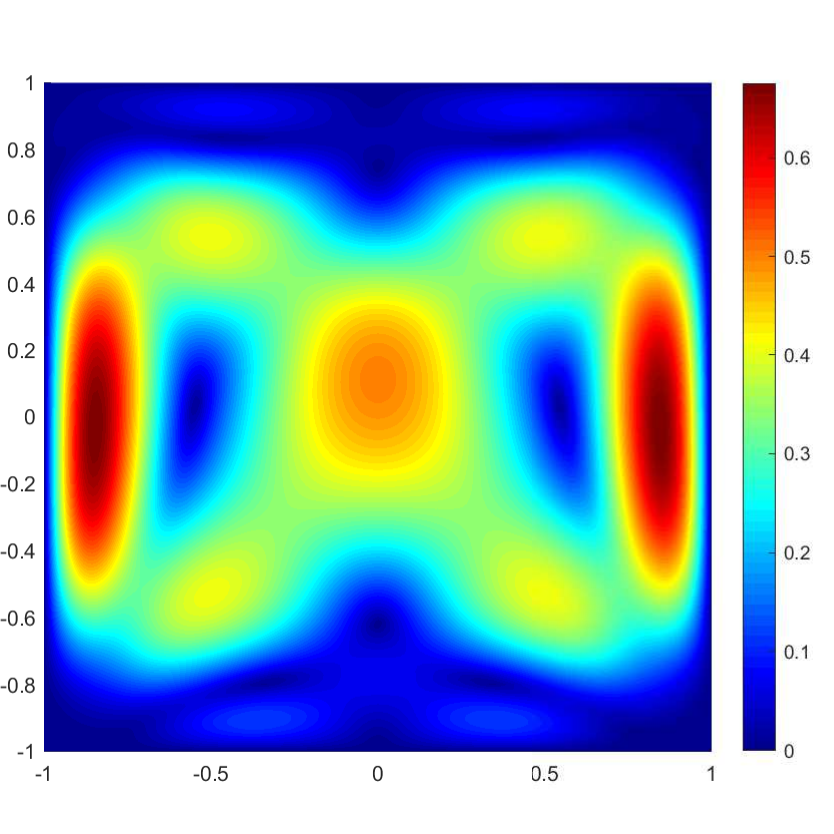}
\includegraphics[scale=0.24]{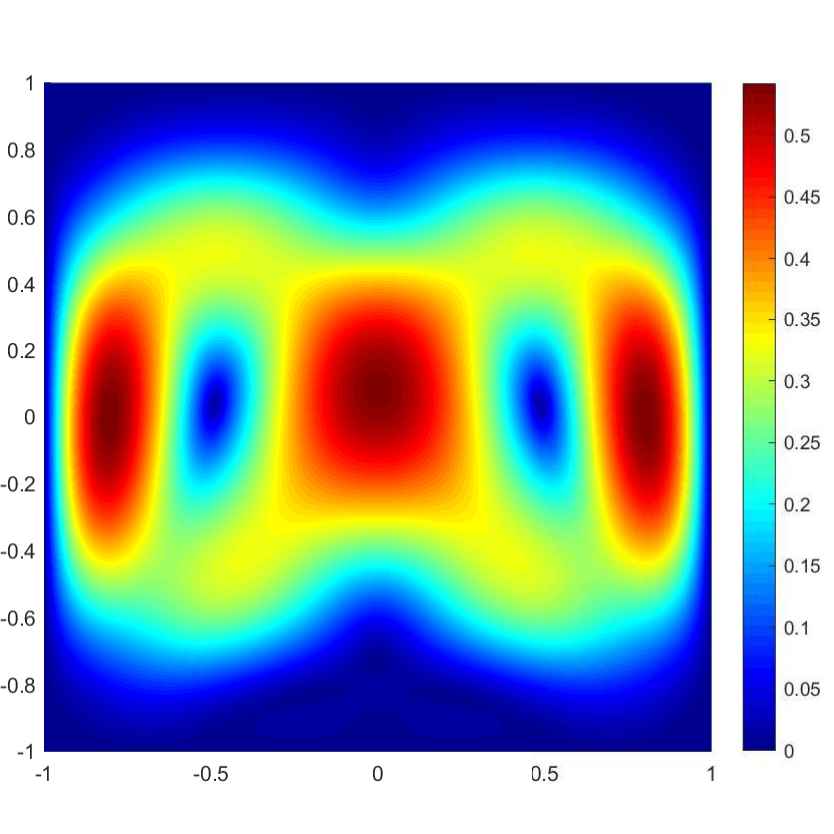}
\includegraphics[scale=0.24]{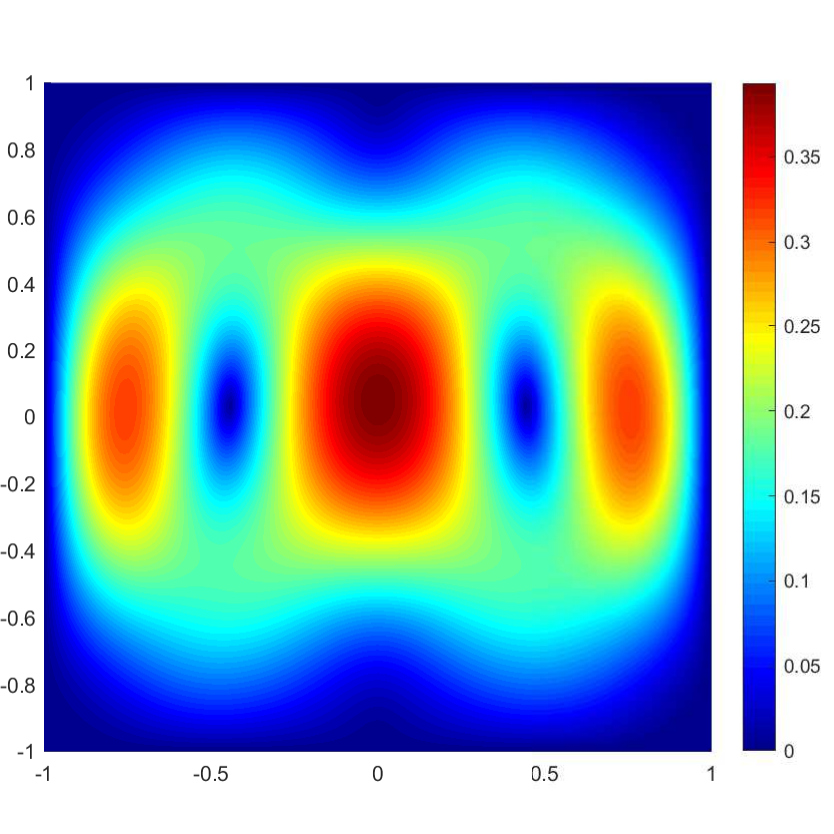}
\caption{Example \ref{62}, images of the velocity $\uu$ (first row) and $\lvert\uu\rvert$ (second row) at $t=0.01, 0.1, 0.2, 0.4$ computed by PCSAV scheme.}
\label{fig:smooth_u}
\end{center}
\end{figure}

\begin{figure}[tp]
\begin{center}
\includegraphics[scale=0.24]{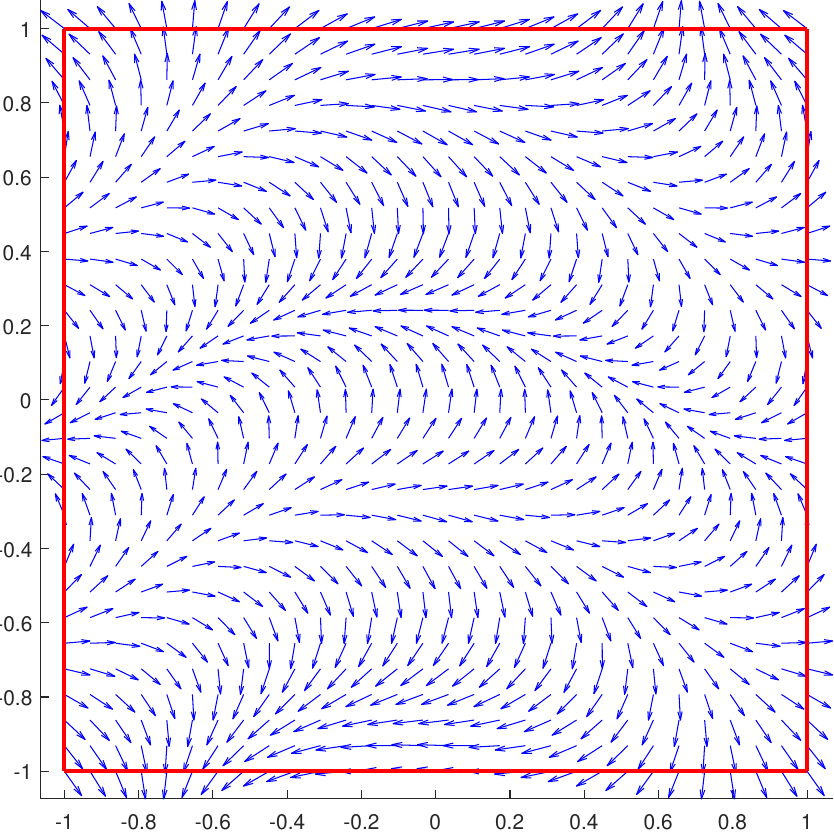}
\includegraphics[scale=0.24]{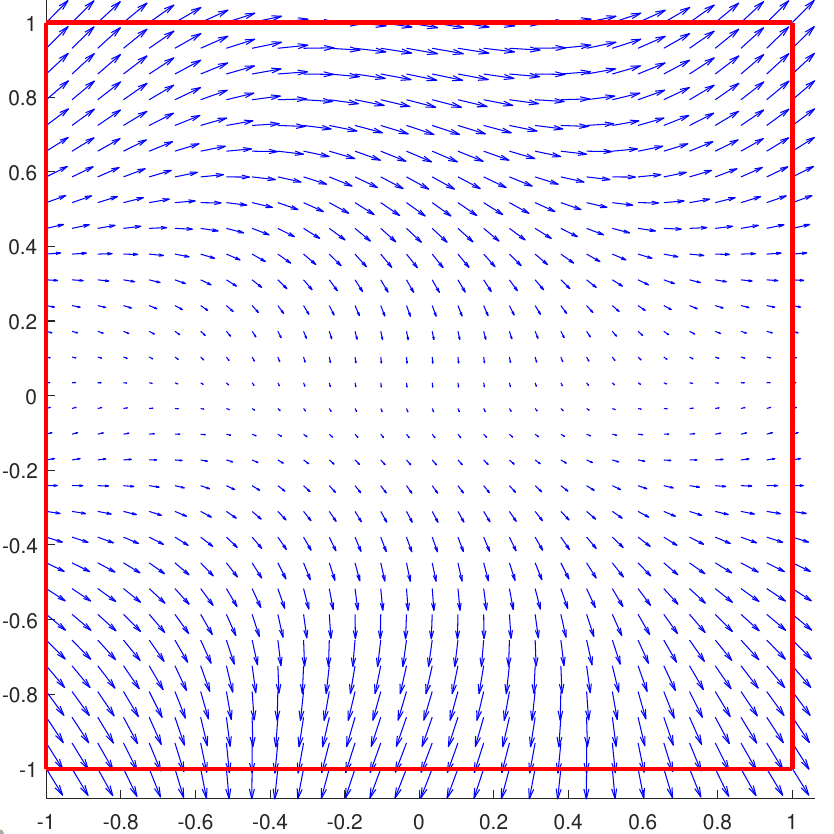}
\includegraphics[scale=0.24]{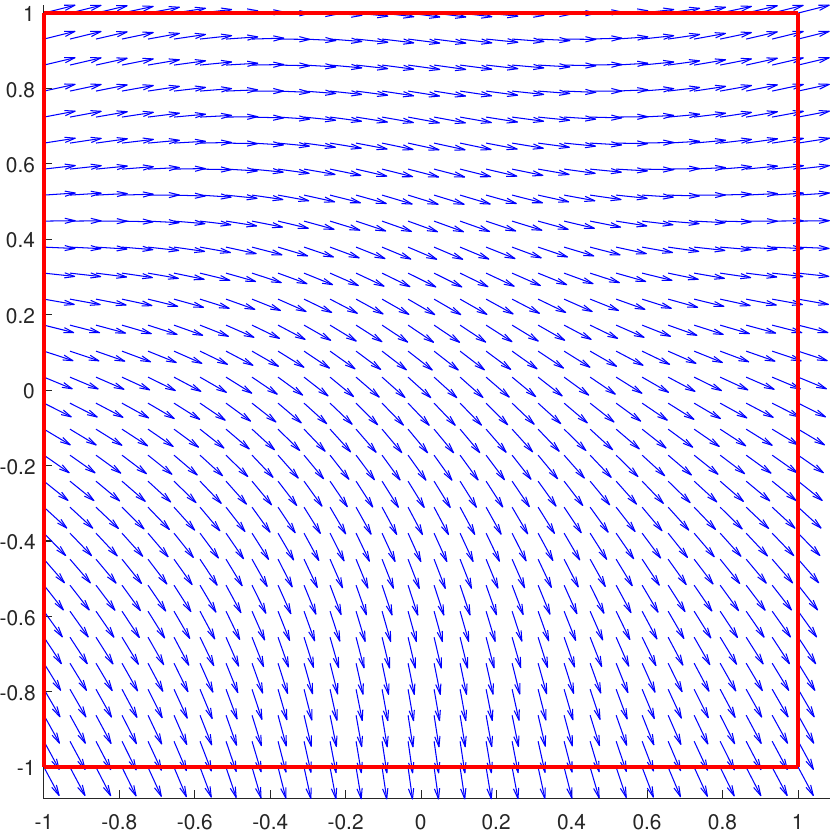}
\includegraphics[scale=0.24]{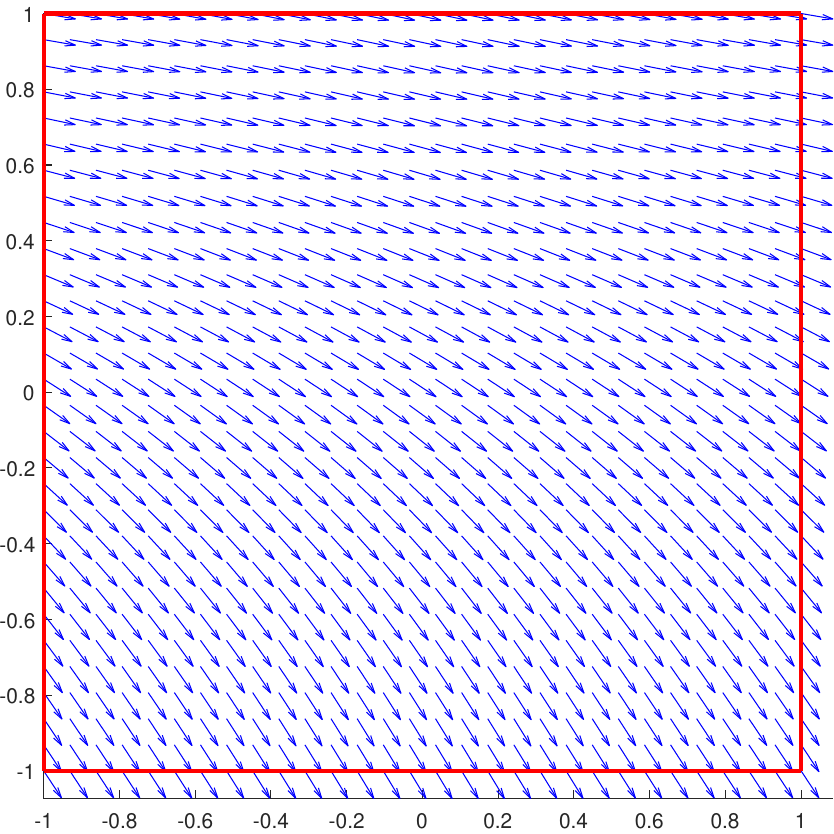}

\includegraphics[scale=0.24]{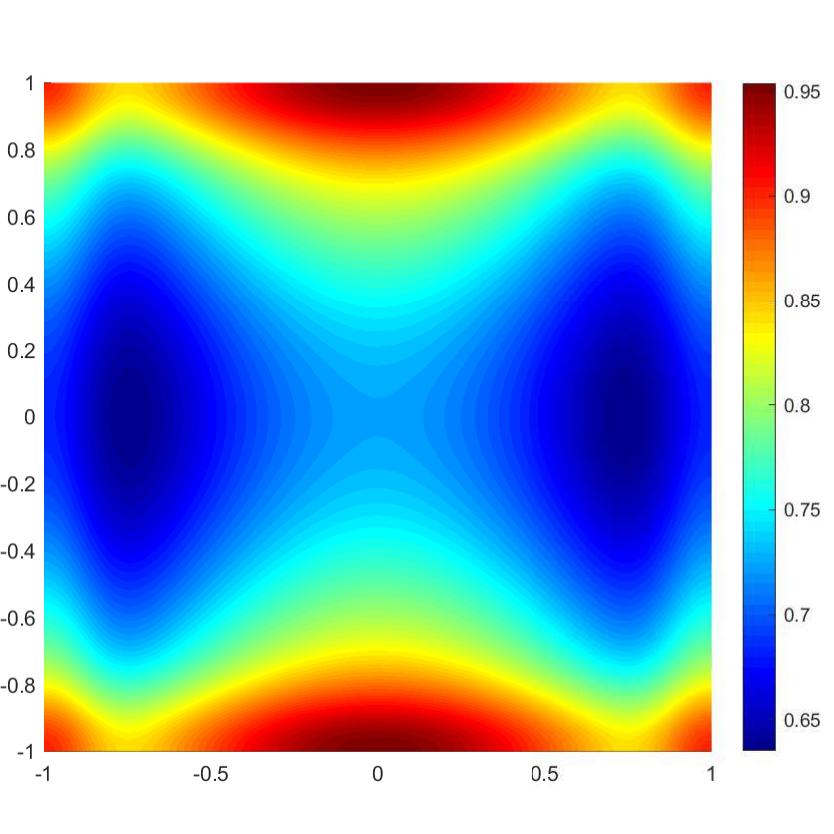}
\includegraphics[scale=0.24]{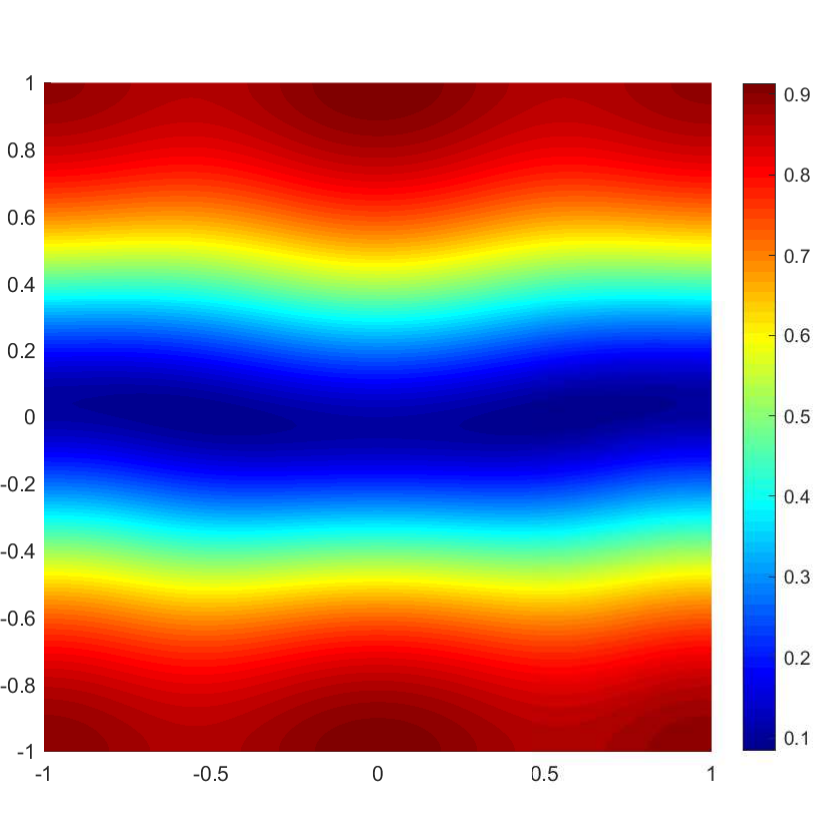}
\includegraphics[scale=0.24]{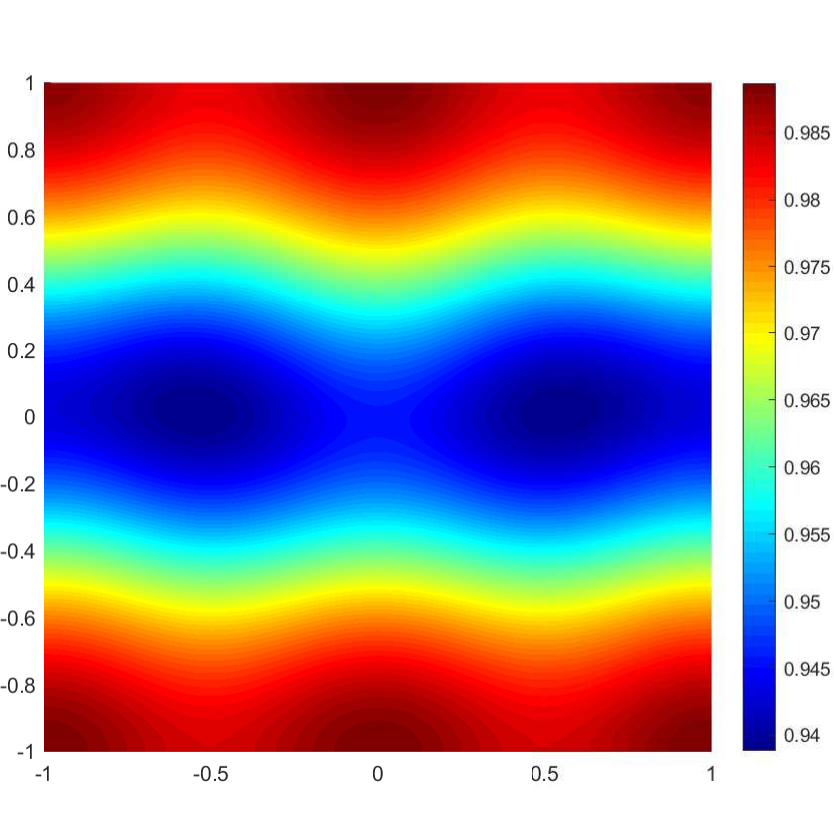}
\includegraphics[scale=0.24]{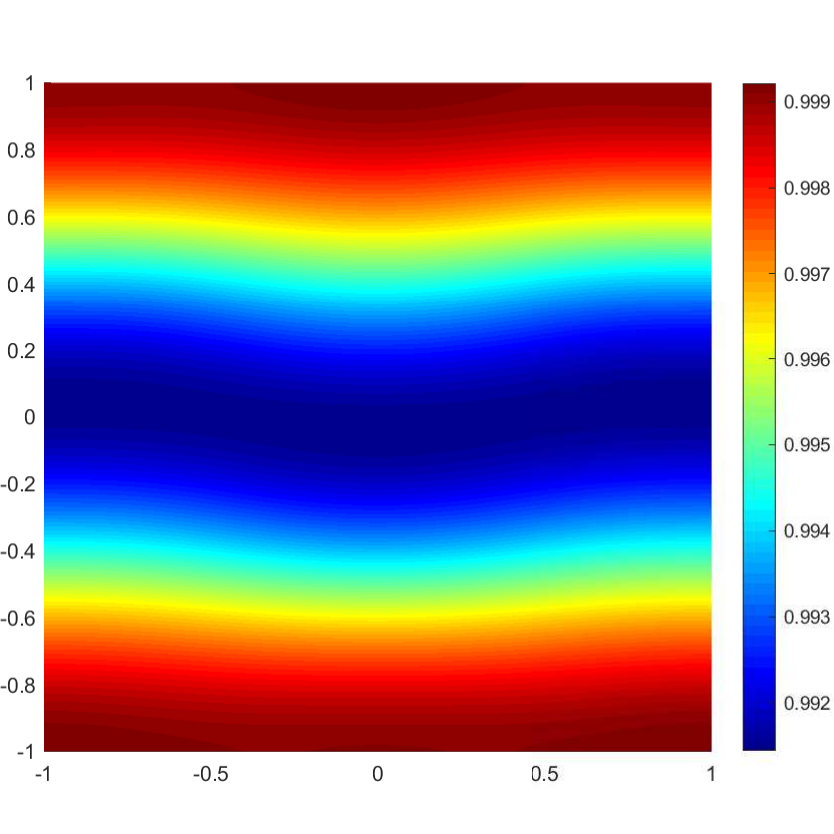}
\caption{Example \ref{62}, images of the director field $\dd$ (first row) and $\lvert\dd\rvert$ (second row) at $t=0.01, 0.1, 0.2, 0.4$ computed by PCSAV-ECT scheme.}
\label{fig:NS-smooth_dir}
\end{center}
\end{figure}
\begin{figure}[tp]
\begin{center}
\includegraphics[scale=0.24]{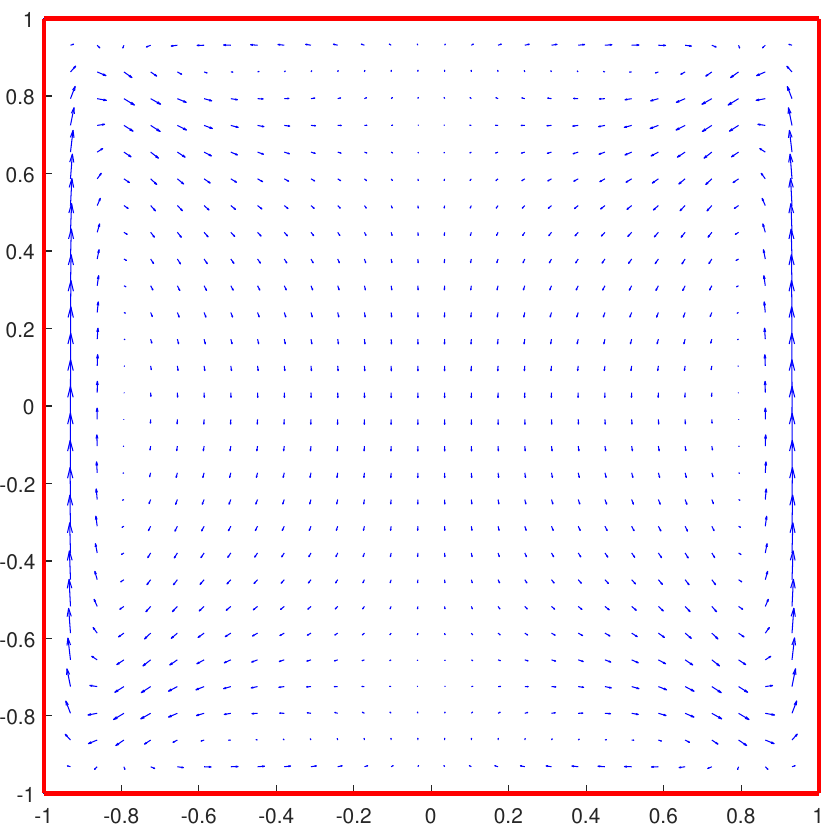}
\includegraphics[scale=0.24]{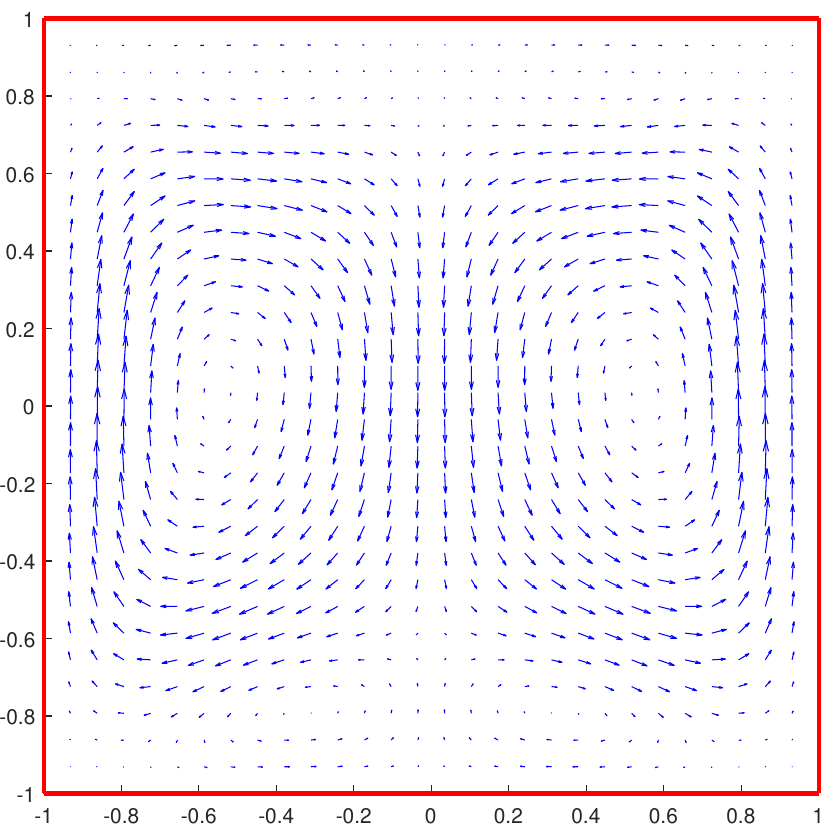}
\includegraphics[scale=0.24]{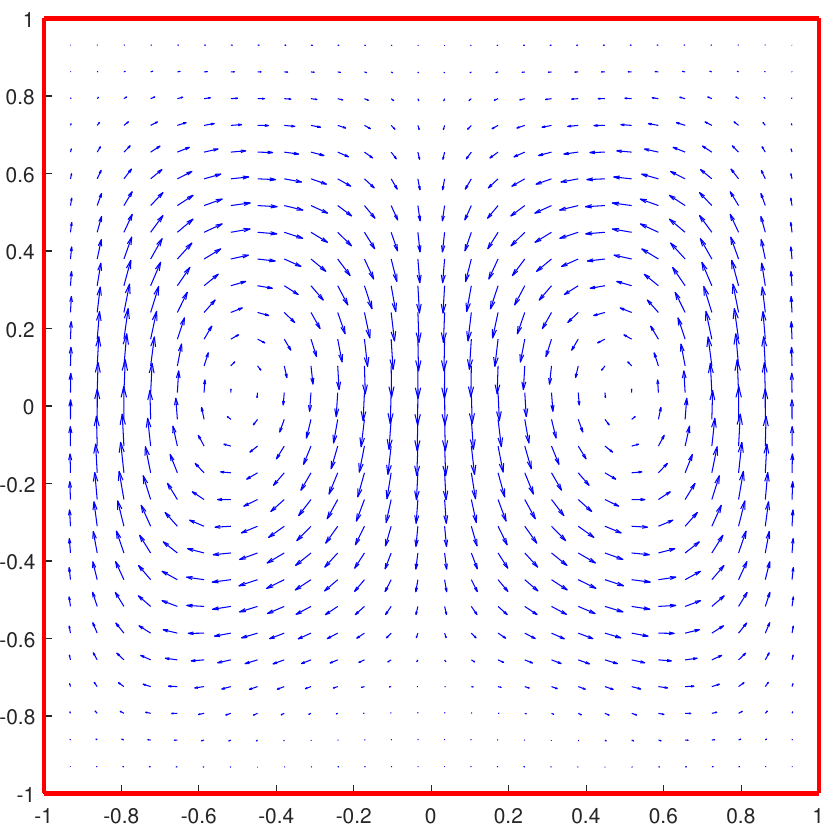}
\includegraphics[scale=0.24]{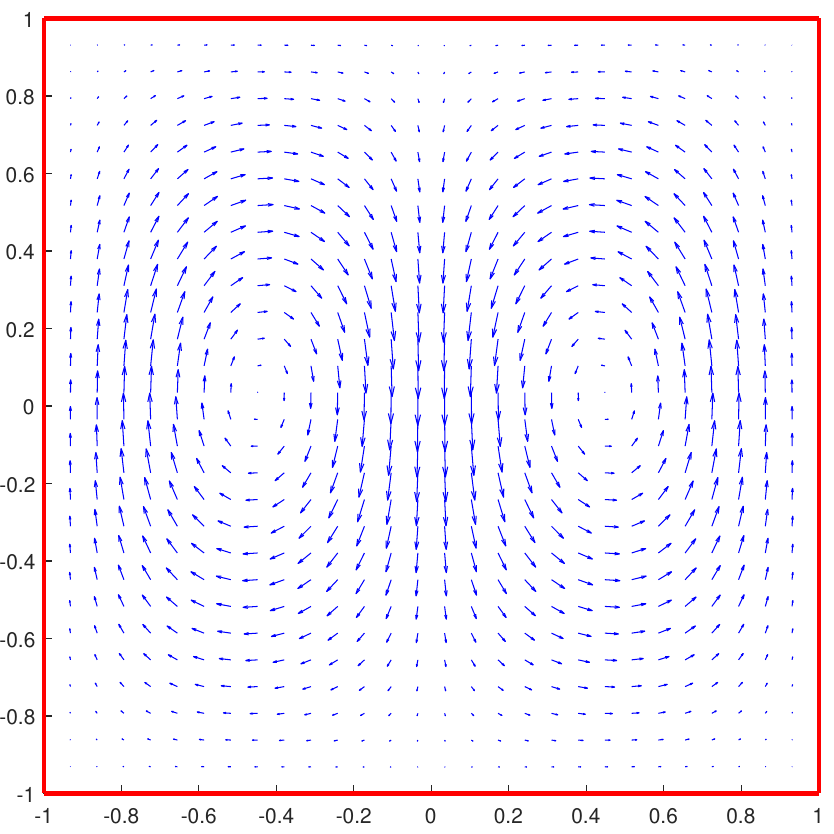}

\includegraphics[scale=0.24]{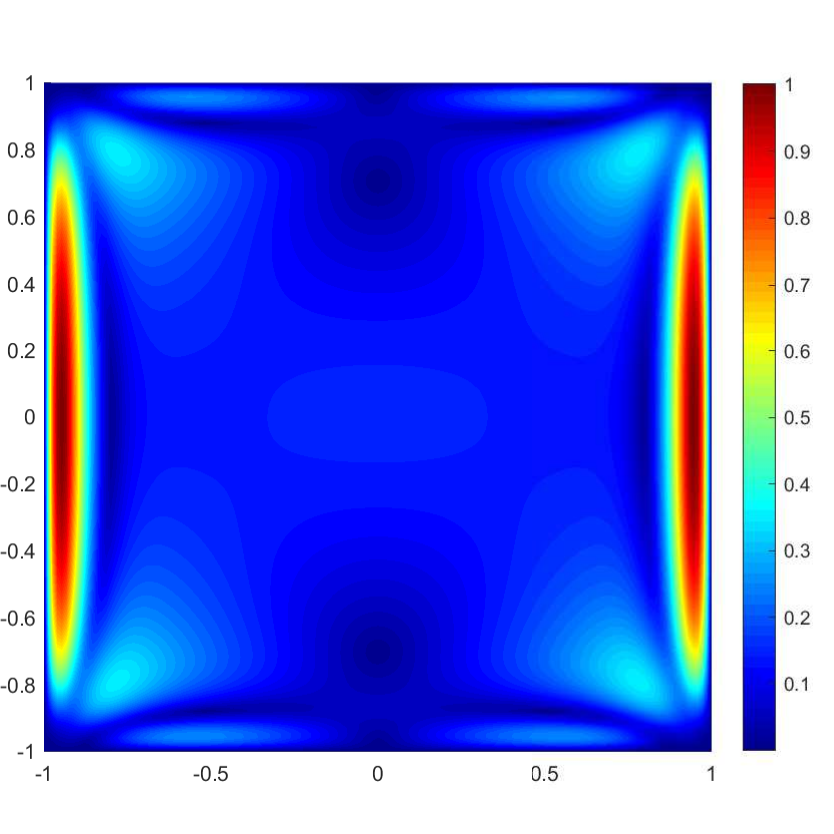}
\includegraphics[scale=0.24]{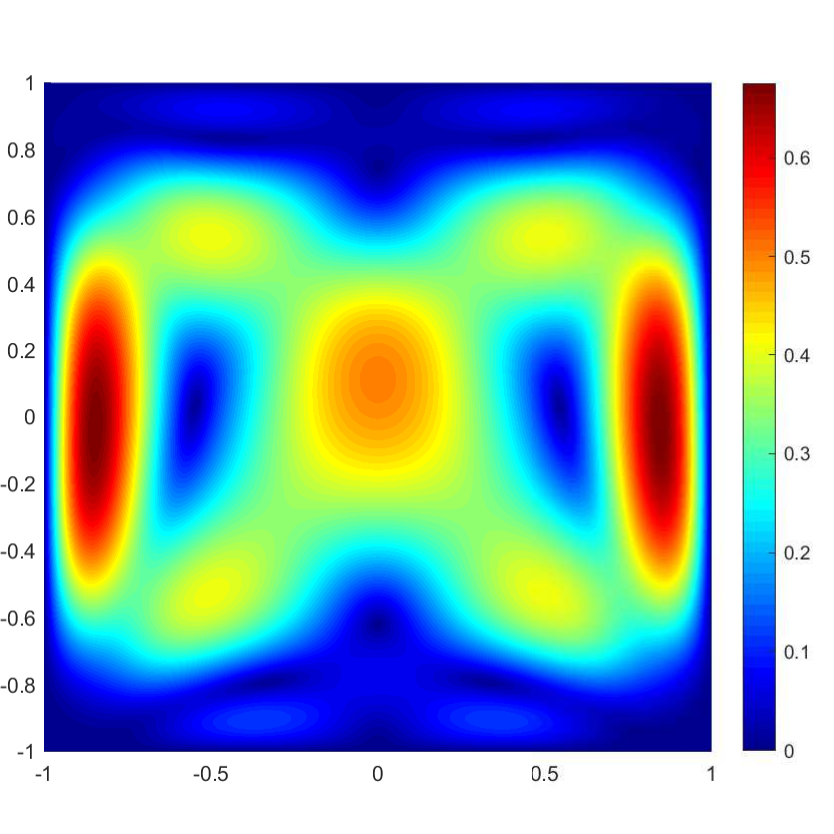}
\includegraphics[scale=0.24]{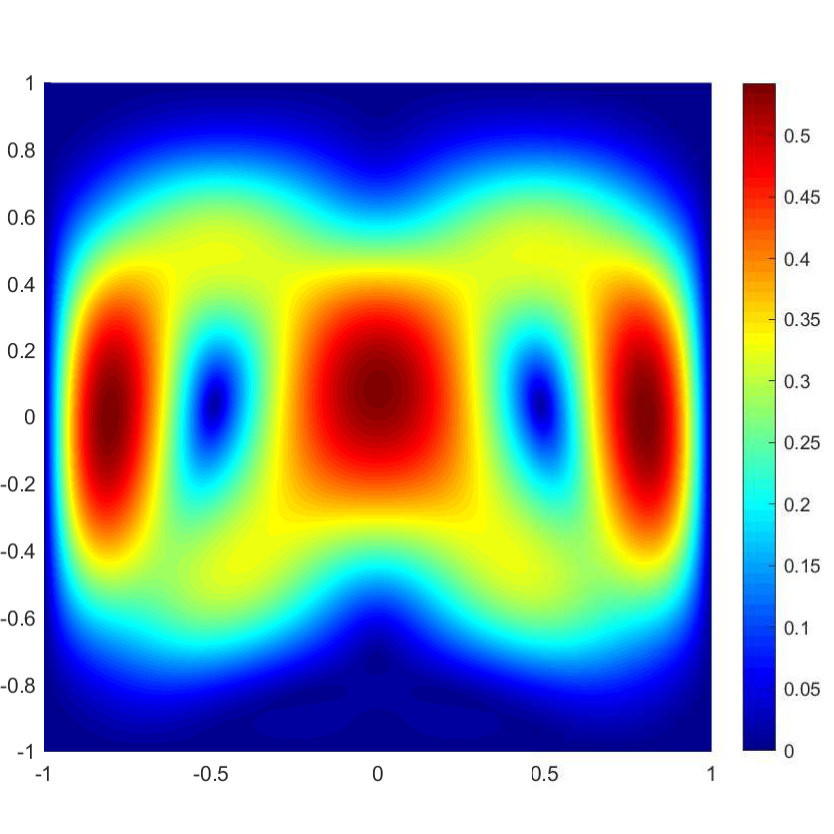}
\includegraphics[scale=0.24]{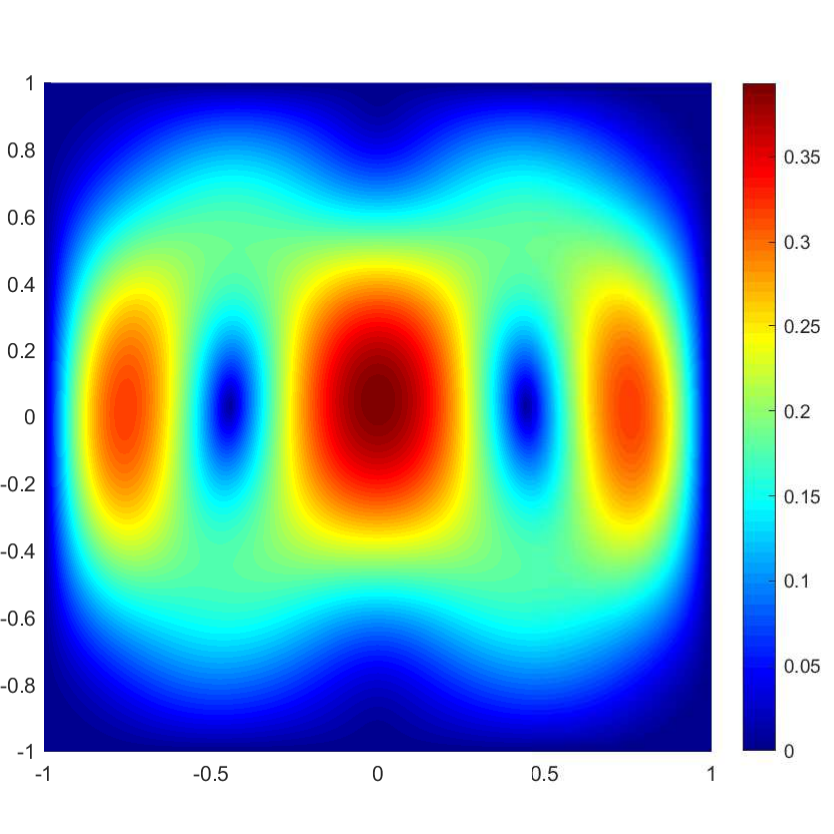}
\caption{Example \ref{62}, images of the velocity $\uu$ (first row) and $\lvert\uu\rvert$ (second row) at $t=0.01, 0.1, 0.2, 0.4$ computed by PCSAV-ECT scheme.}
\label{fig:NS-smooth_u}
\end{center}
\end{figure}

\begin{figure}[tp]
\begin{center}
\subfigure[]
{\label{fig:PCSAVKin_energy}\includegraphics[scale=0.45]{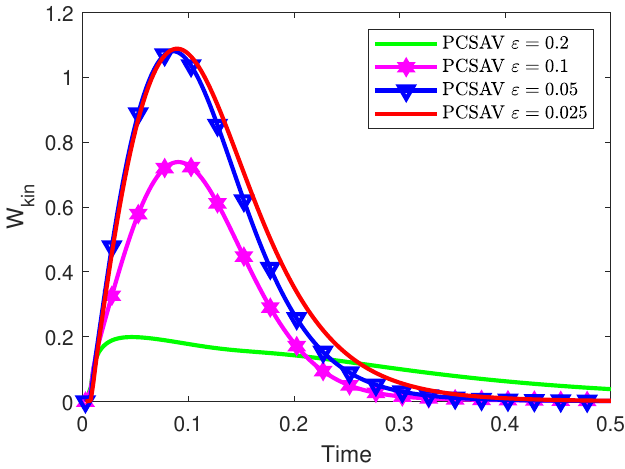}}	
\subfigure[]
{\label{fig:PCSAVela_energy}\includegraphics[scale=0.45]{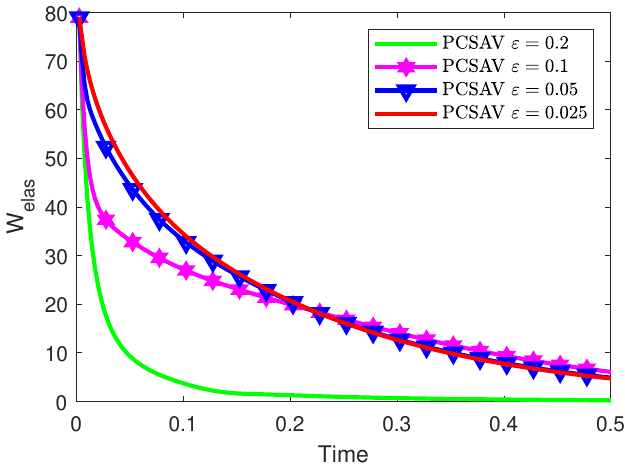}}	
\subfigure[]
{\label{fig:PCSAVpen_energy}\includegraphics[scale=0.45]{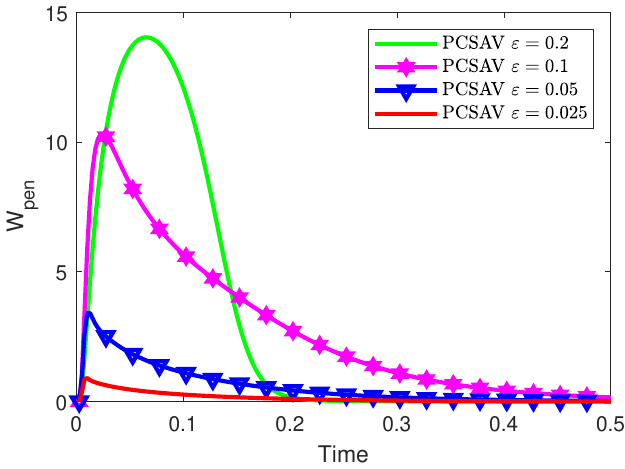}}	\\
\subfigure[]
{\label{fig:PCSAVenergy}\includegraphics[scale=0.45]{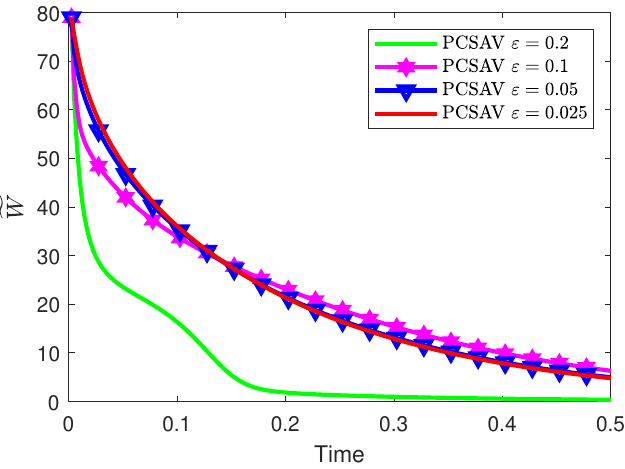}}
\subfigure[]
{\label{fig:PCSAVenergyori}\includegraphics[scale=0.45]{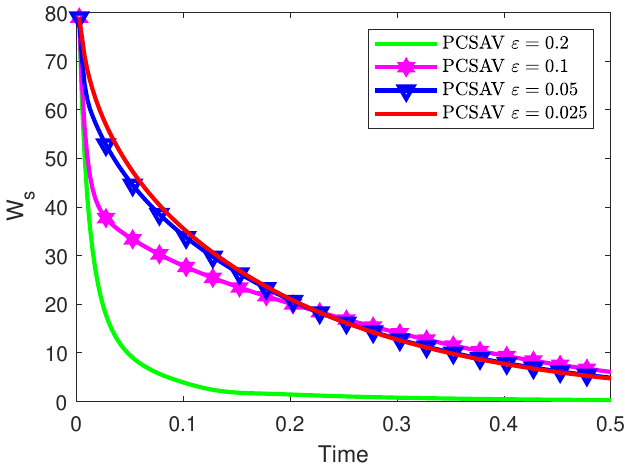}}	
\caption{Example \ref{62}, kinetic energy, elastic energy, penalty energy and modified energy computed by the PCSAV scheme.}
\label{fig:pcsav_energy}
\end{center}
\end{figure}

\begin{figure}[tp]
\begin{center}
\subfigure[]
{\label{fig:NSPCSAVKin_energy}\includegraphics[scale=0.45]{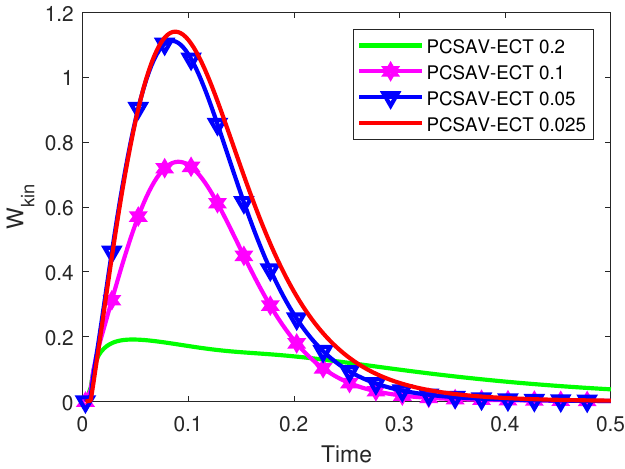}}	
\subfigure[]
{\label{fig:NSPCSAVela_energy}\includegraphics[scale=0.45]{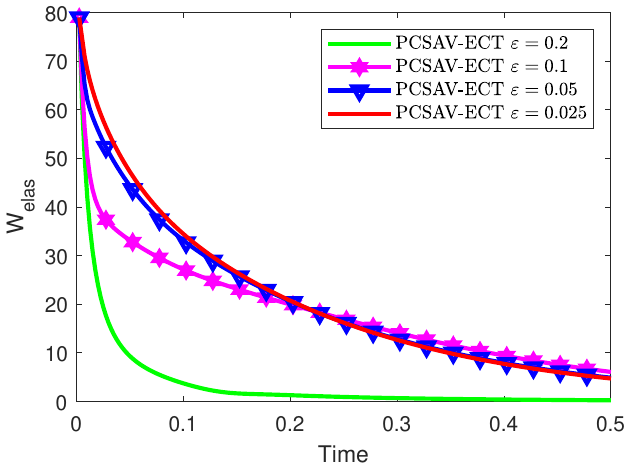}}	
\subfigure[]
{\label{fig:NSPCSAVpen_energy}\includegraphics[scale=0.45]{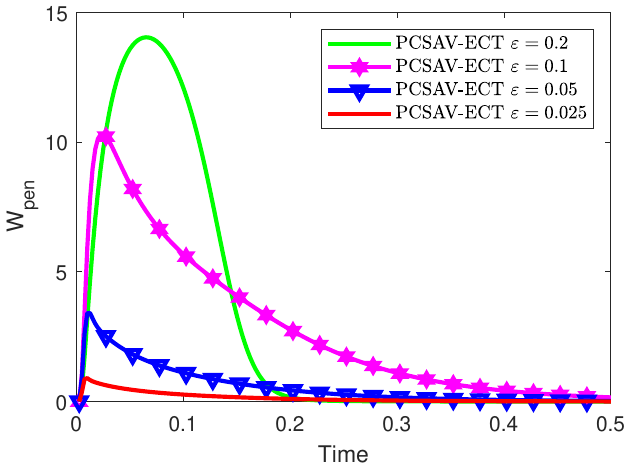}}\\
\subfigure[]
{\label{fig:NSPCSAVenergy}\includegraphics[scale=0.45]{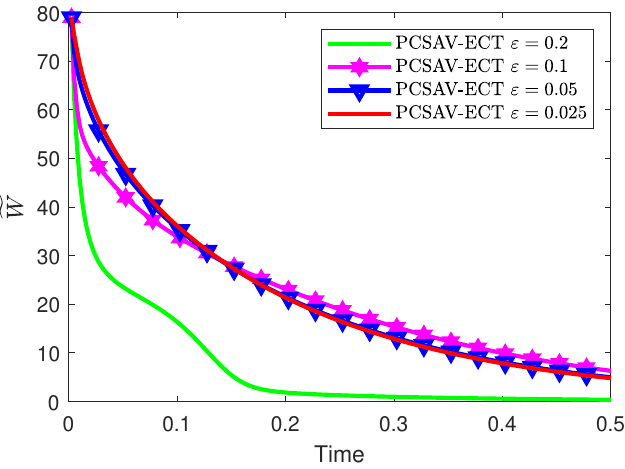}}
\subfigure[]
{\label{fig:NSPCSAVenergyori}\includegraphics[scale=0.45]{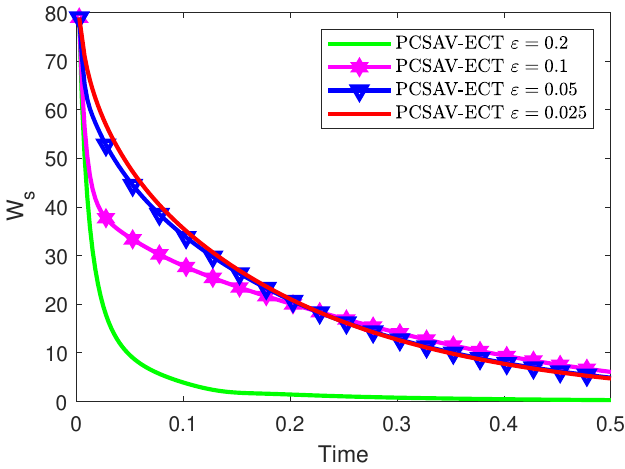}}	
\caption{Example \ref{62}, kinetic energy, elastic energy, penalty energy and modified energy computed by the PCSAV-ECT scheme.}
\label{fig:nspcsav_energy}
\end{center}
\end{figure}

\begin{figure}[tp]
\begin{center}
\subfigure[]
{\includegraphics[scale=0.6]{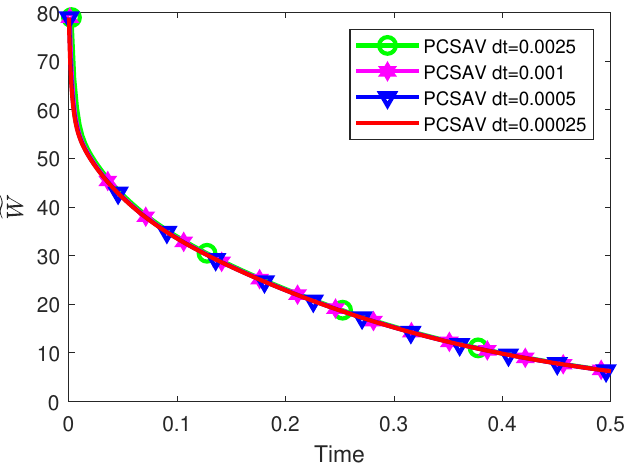}}	
\subfigure[]
{\includegraphics[scale=0.6]{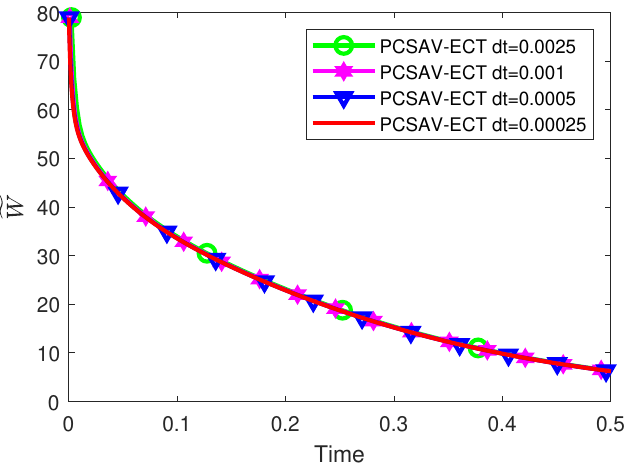}}
\caption{Example \ref{62}, modified energy computed by the PCSAV and PCSAV-ECT scheme.}
\label{fig:diffdt_energy}
\end{center}
\end{figure}

\begin{example}\label{63}
In this example, we apply the PCSAV Scheme \ref{2stPCSAV} and PCSAV-ECT Scheme \ref{2stPCSAV-ect} for numerically solving the modified Ericksen-Leslie model \eqref{eq:PEL-1}-\eqref{eq:PEL-3} with $\Omega=(-1,1)\times(-1,1)$, $\nu=1, \lambda=0.01, \gamma=1, \varepsilon=0.05$, and the initial condition \cite{23zheng}:
\begin{numcases}{}
\dd^0=\tilde{\dd}/\sqrt{\lvert\tilde{\dd}\rvert^2+0.05^2},\quad\text{where}\;\tilde{\dd}=(x^2+y^2-0.5^2,y),\nonumber\\
\uu^0=\left(0,0\right)^t,\nonumber\\
p^0=0.\nonumber
\end{numcases}
\end{example}

This numerical example simulates the evolution of the liquid crystal flows with singularities, which is an interesting physical phenomenon in liquid crystal modeling.
We use the PCSAV-ECT scheme to solve the modified Ericksen-Leslie model with a time step size of $\Delta t=0.0005$. 
In Fig. \ref{fig:2stsimpled} and Fig. \ref{fig:2stsimpled_dnorm}, we show the evolution of two defects at times $t=$0.01, 0.2, 0.32, 0.4 and 0.5.
The snapshots of the velocity field $\uu$ and its magnitudes at times $t=$0.01, 0.2, 0.32, 0.4 and 0.5 are shown in Fig. \ref{fig:2stsimpleu} and Fig. \ref{fig:2stsimpleu_norm}.
At time $t=0.01$, there are two defects at $(\pm\frac{1}{2},0)$, which cause the velocity field to form four symmetric vortices.
As time passes, the two singularities are gradually carried back to the origin and finally annihilate each other simultaneously.
We note that the evolution of the singularities observed in this example is consistent with the results of previous study \cite{23zheng}. This agreement further supports the correctness and reliability of our proposed schemes.

\begin{figure}[tp]
\begin{center}
\subfigure[]{\label{fig:2stsubfigd1}\includegraphics[scale=0.26]{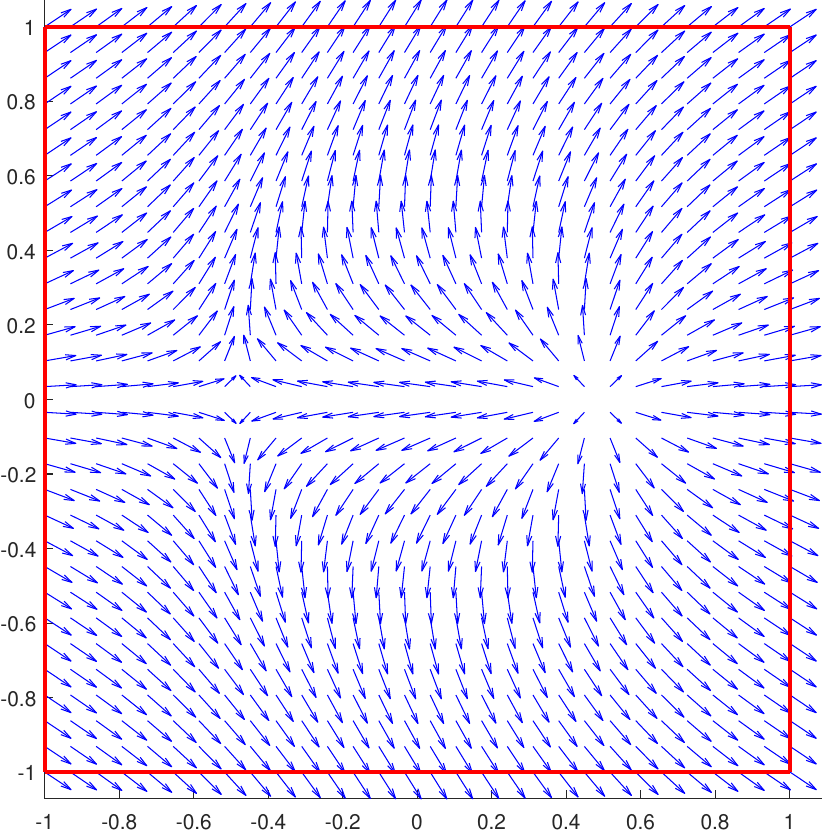}}	
\subfigure[]{\label{fig:2stsubfigd2}\includegraphics[scale=0.26]{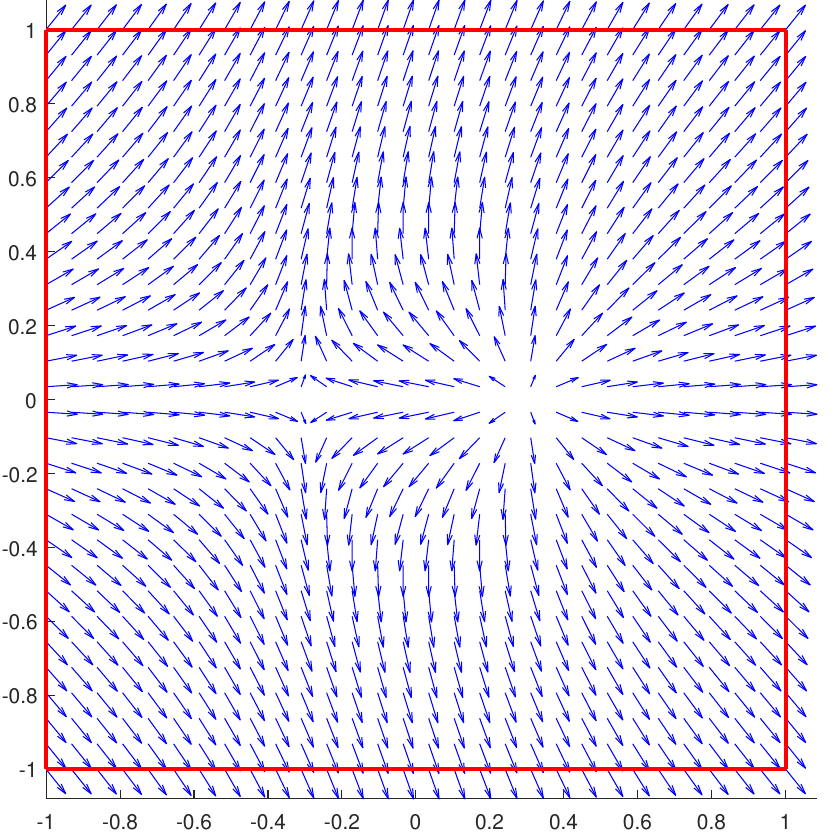}}	
\subfigure[]{\label{fig:2stsubfigd3}\includegraphics[scale=0.26]{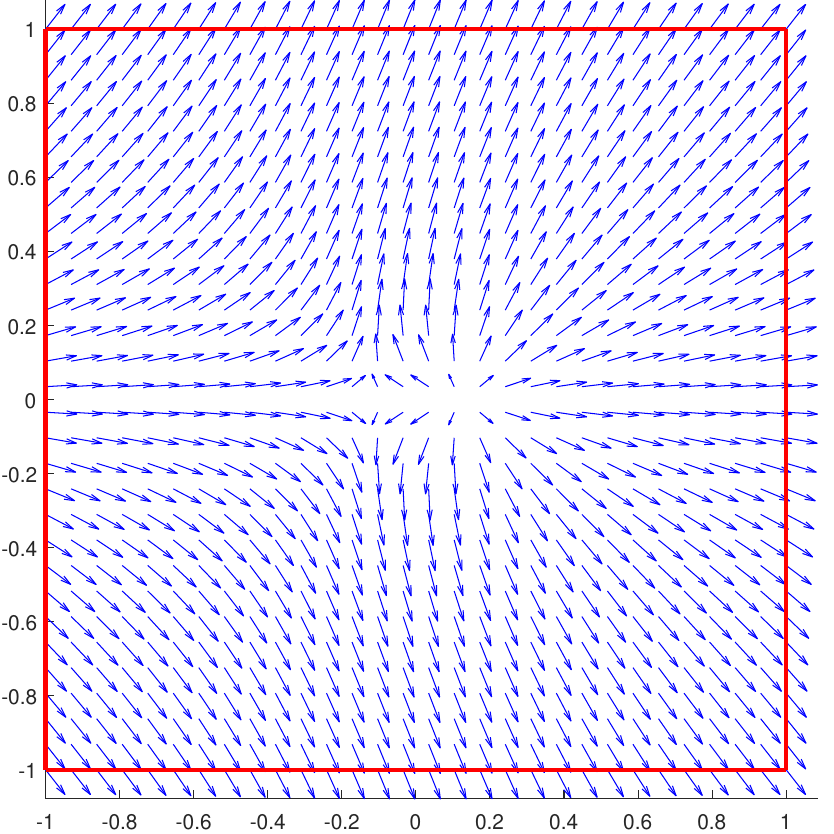}}

\subfigure[]{\label{fig:2stsubfigd4}\includegraphics[scale=0.26]{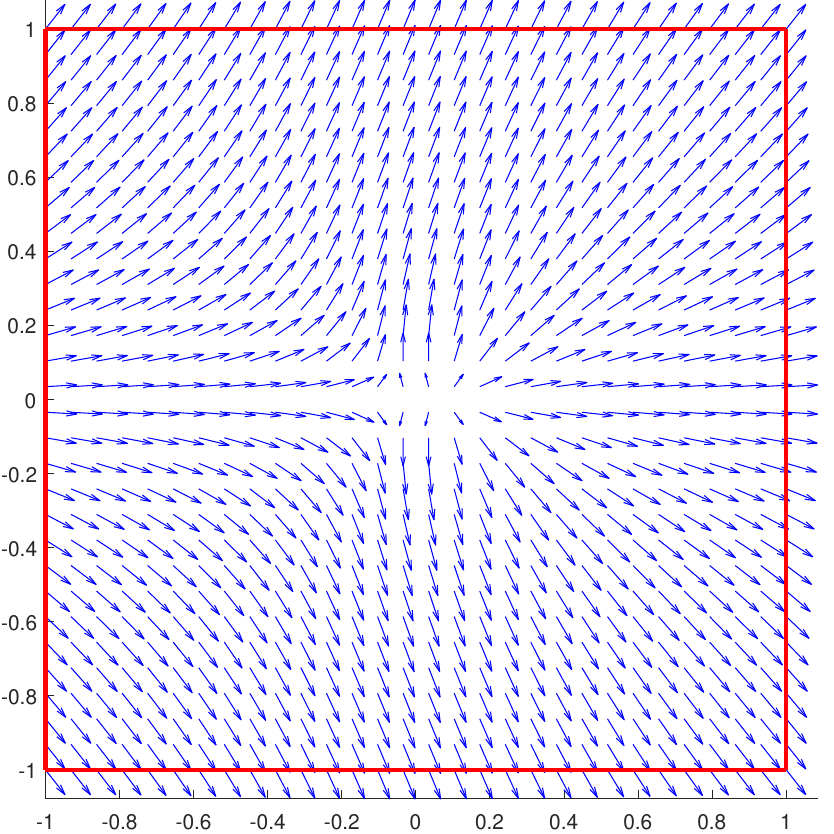}}
\subfigure[]{\label{fig:2stsubfigd5}\includegraphics[scale=0.26]{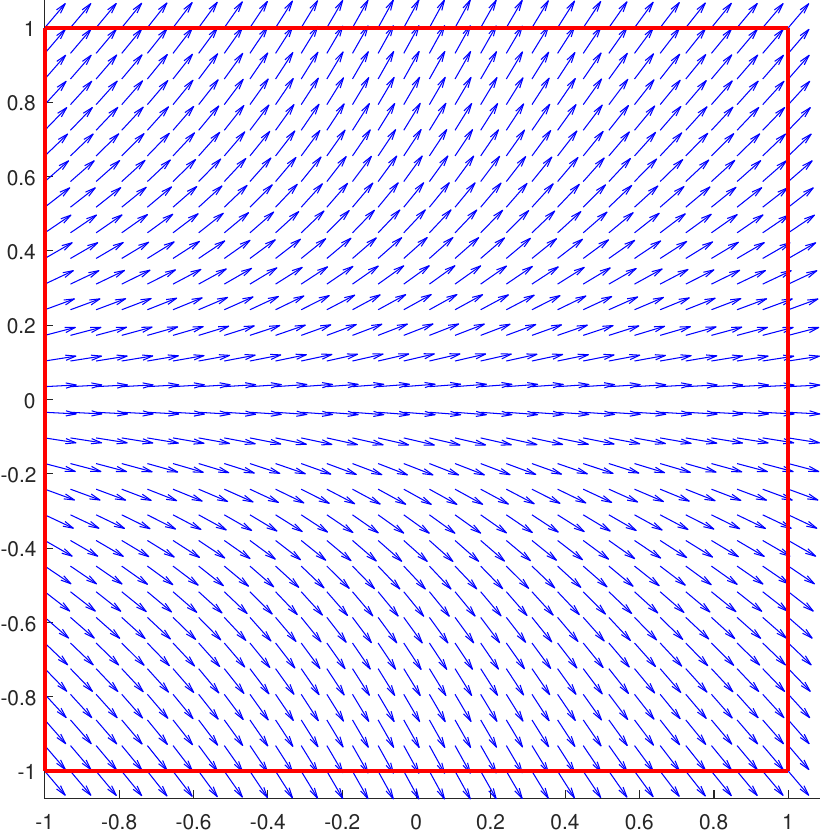}}
\subfigure[]{\label{fig:2stsubfigd6}\includegraphics[scale=0.26]{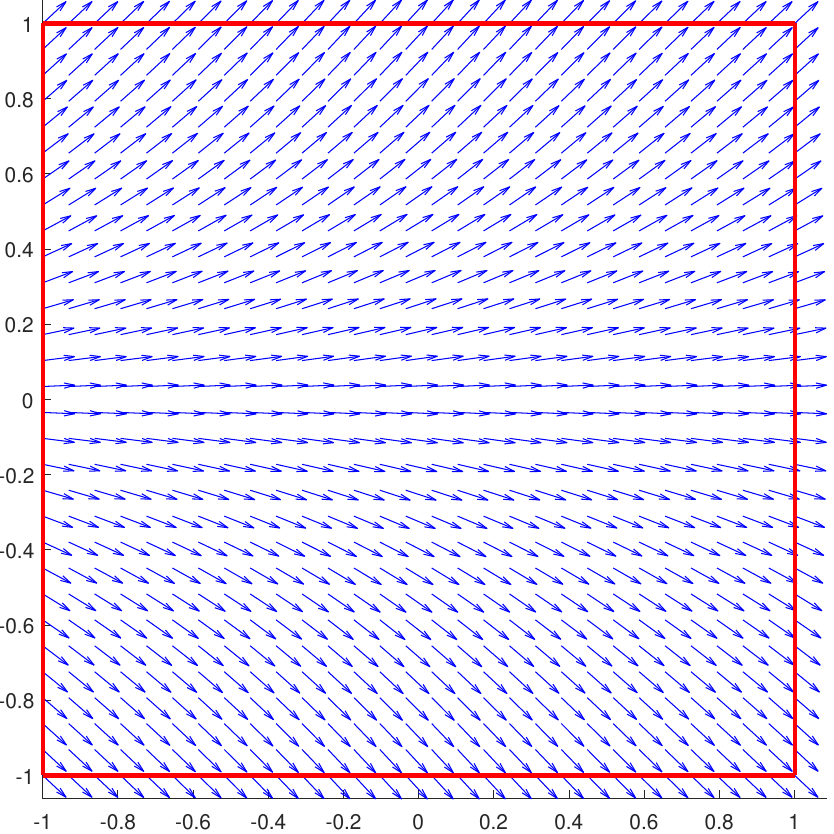}}
\caption{Example \ref{63}, snapshots of the director field $\dd$ by the PCSAV-ECT scheme at $t=$0.01, 0.2, 0.32, 0.33, 0.4, 0.5.}
\label{fig:2stsimpled}
\end{center}
\end{figure}

\begin{figure}[tp]
\begin{center}
\subfigure[]{\label{fig:2stsubfigdnorm1}\includegraphics[scale=0.26]{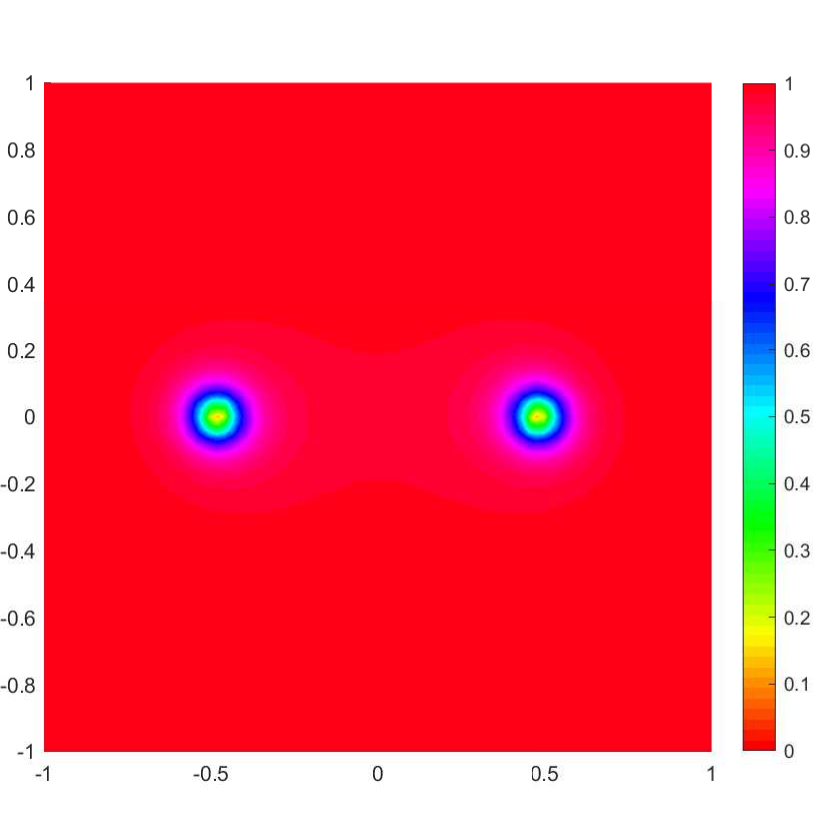}}
\subfigure[]{\label{fig:2stsubfigdnorm2}\includegraphics[scale=0.26]{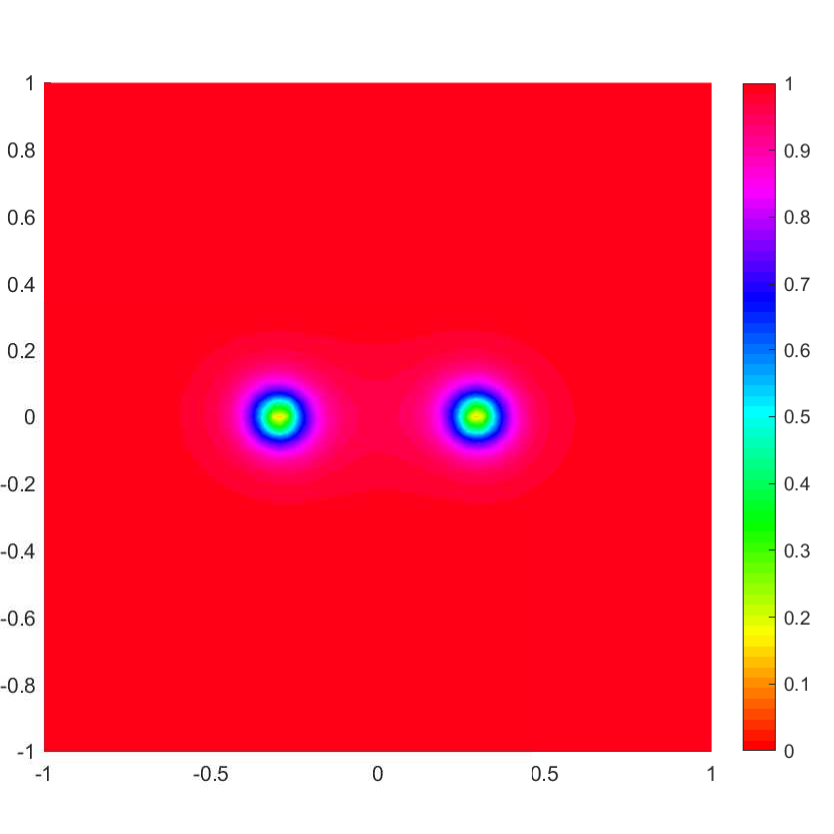}}	
\subfigure[]{\label{fig:2stsubfigdnorm3}\includegraphics[scale=0.26]{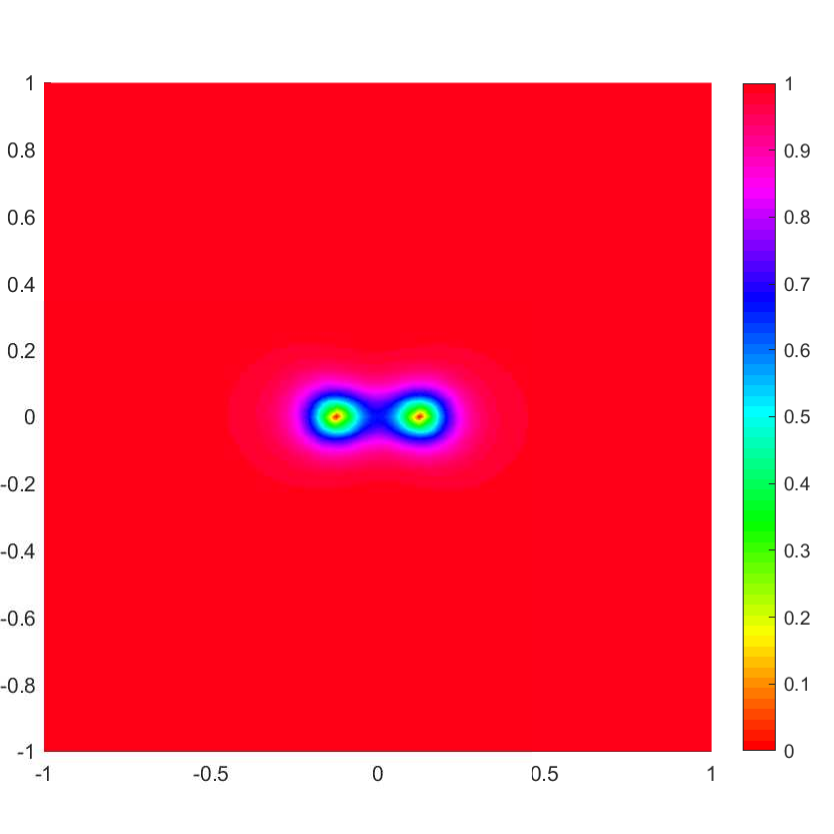}}

\subfigure[]{\label{fig:2stsubfigdnorm4}\includegraphics[scale=0.26]{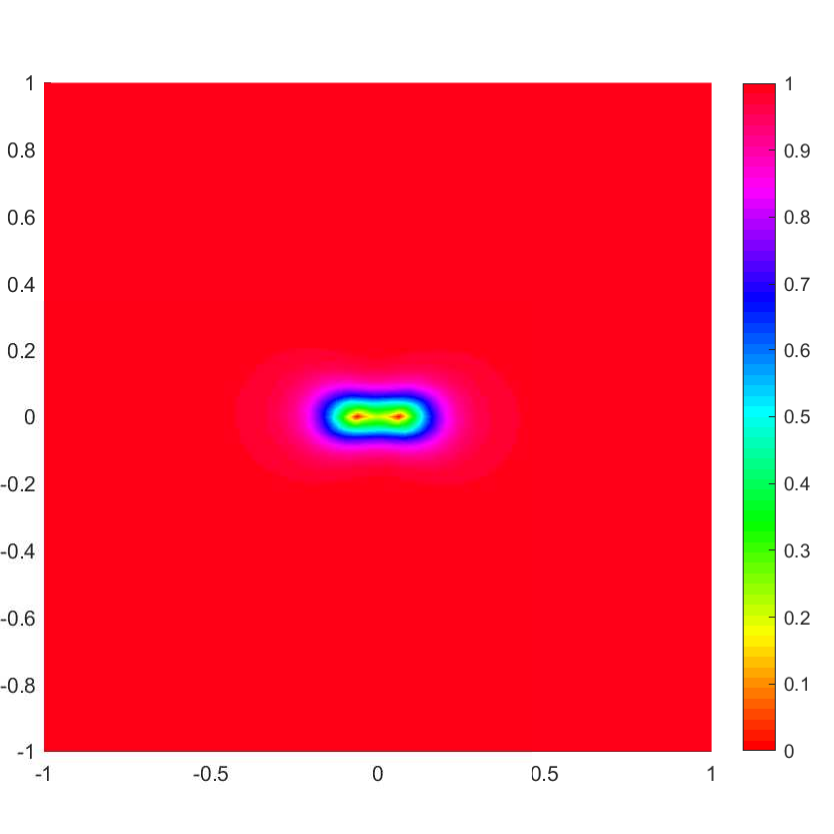}}
\subfigure[]{\label{fig:2stsubfigdnorm5}\includegraphics[scale=0.26]{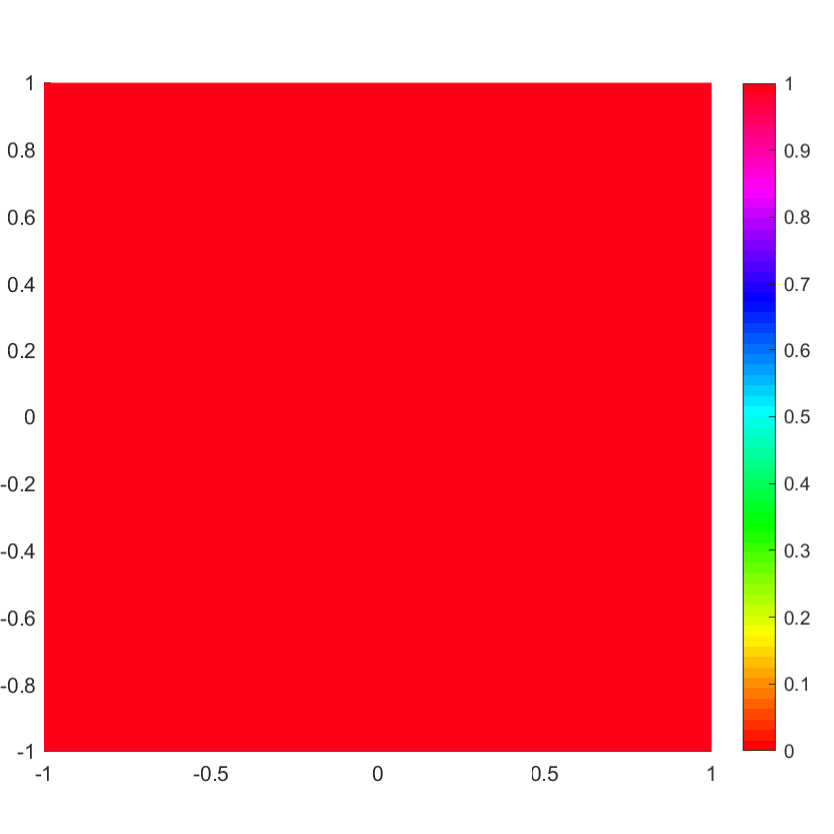}}
\subfigure[]{\label{fig:2stsubfigdnorm6}\includegraphics[scale=0.26]{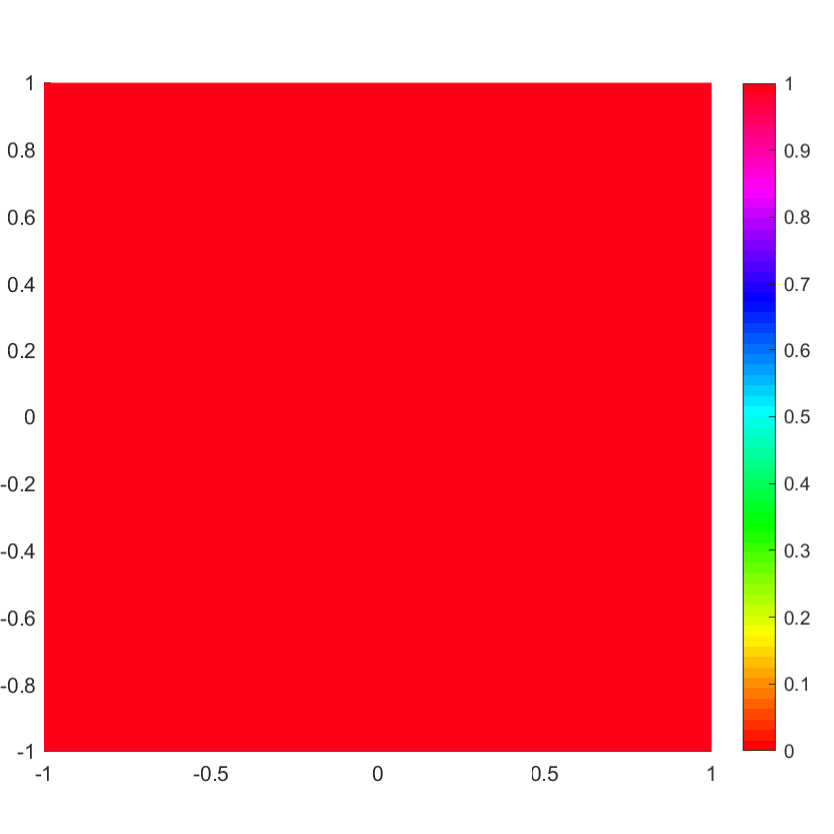}}	
\caption{Example \ref{63}, snapshots of $\lvert\dd\rvert$ by the PCSAV-ECT scheme at $t=$0.01, 0.2, 0.32, 0.33, 0.4, 0.5.}
\label{fig:2stsimpled_dnorm}
\end{center}
\end{figure}

\begin{figure}[tp]
\begin{center}	
\subfigure[]{\label{fig:2stsubfigu1}\includegraphics[scale=0.26]{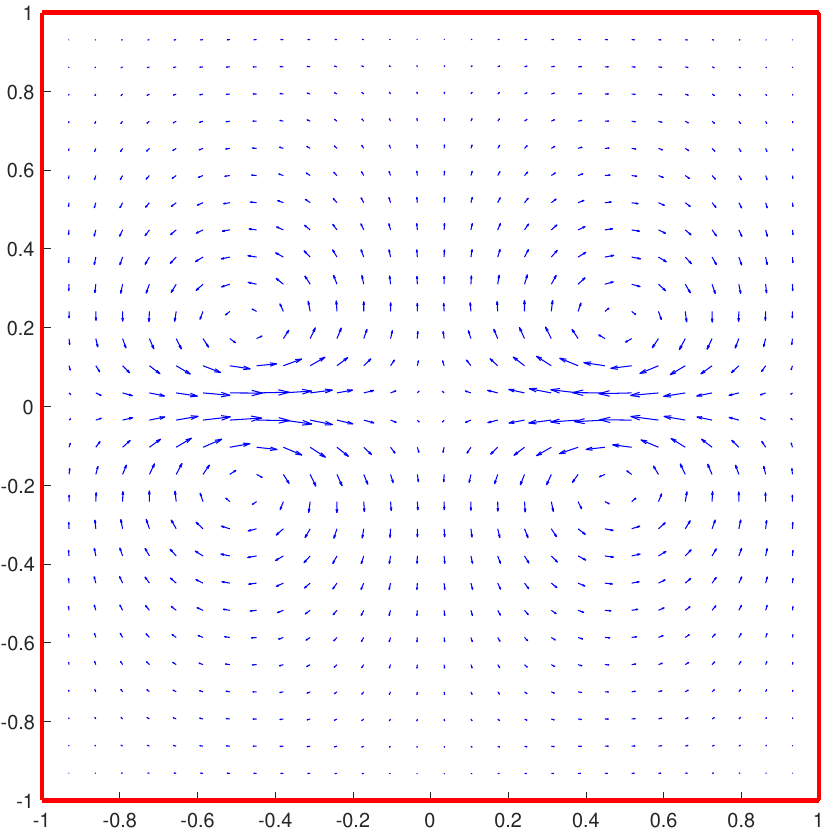}}	
\subfigure[]{\label{fig:2stsubfigu2}\includegraphics[scale=0.26]{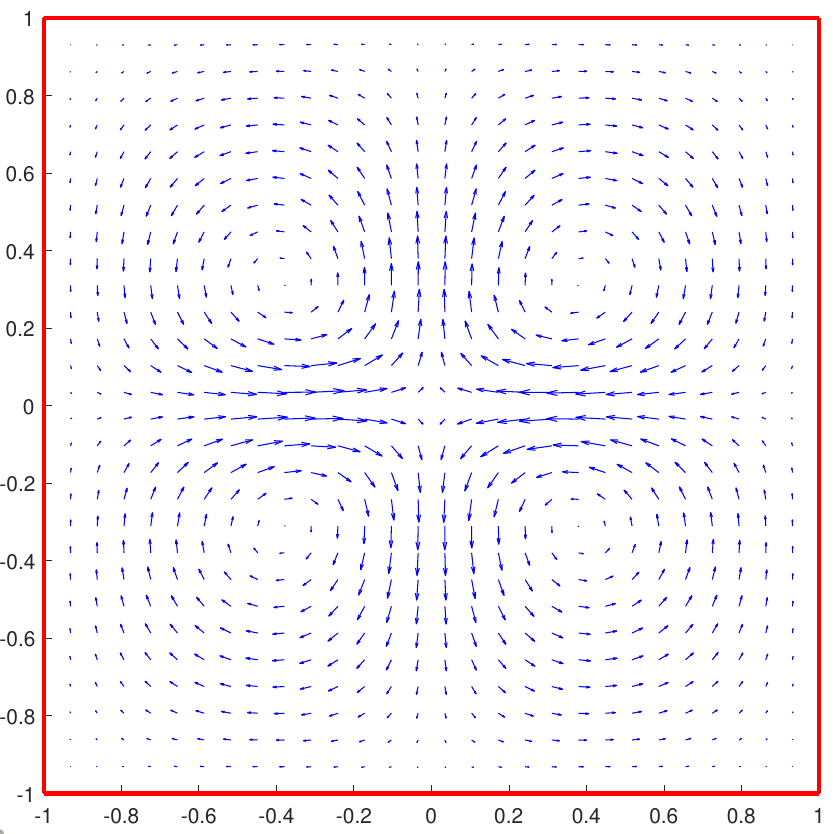}}	
\subfigure[]{\label{fig:2stsubfigu3}\includegraphics[scale=0.26]{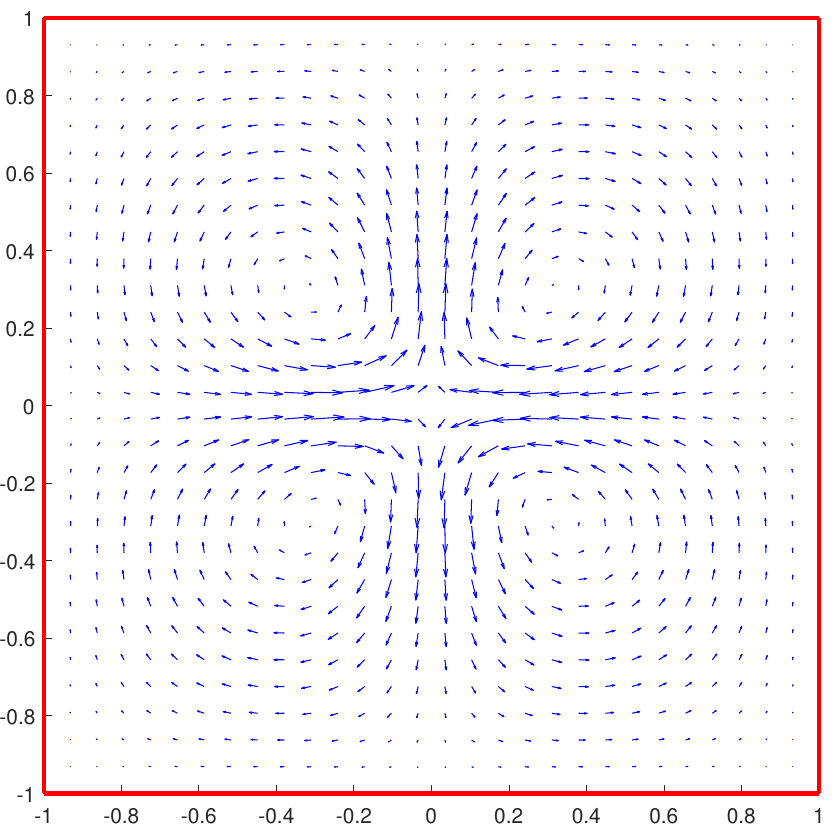}}	

\subfigure[]{\label{fig:2stsubfigu4}\includegraphics[scale=0.26]{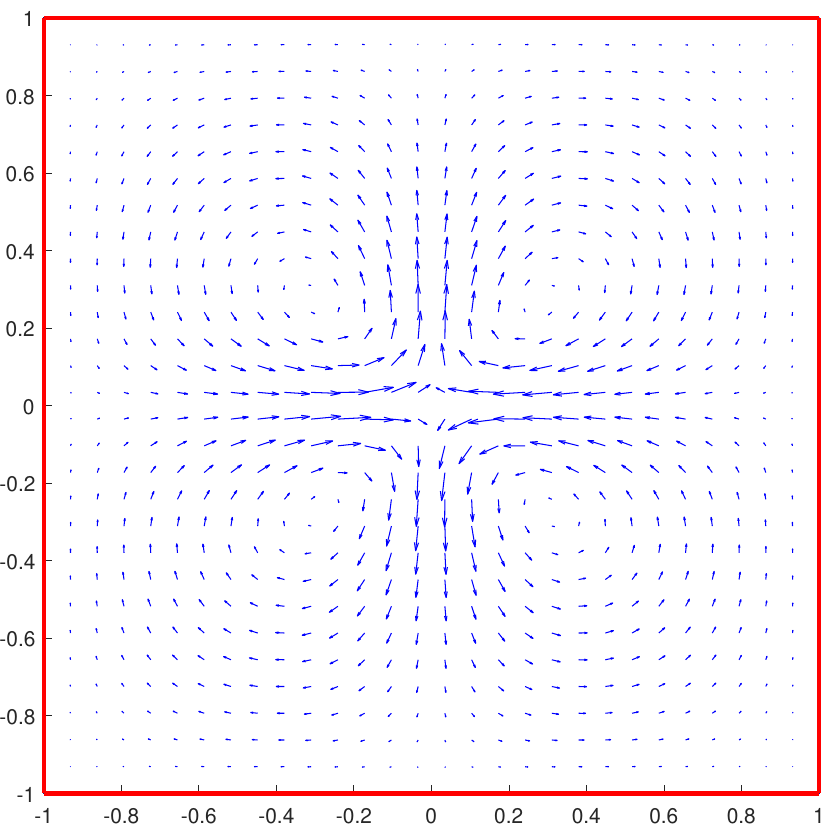}}
\subfigure[]{\label{fig:2stsubfigu5}\includegraphics[scale=0.26]{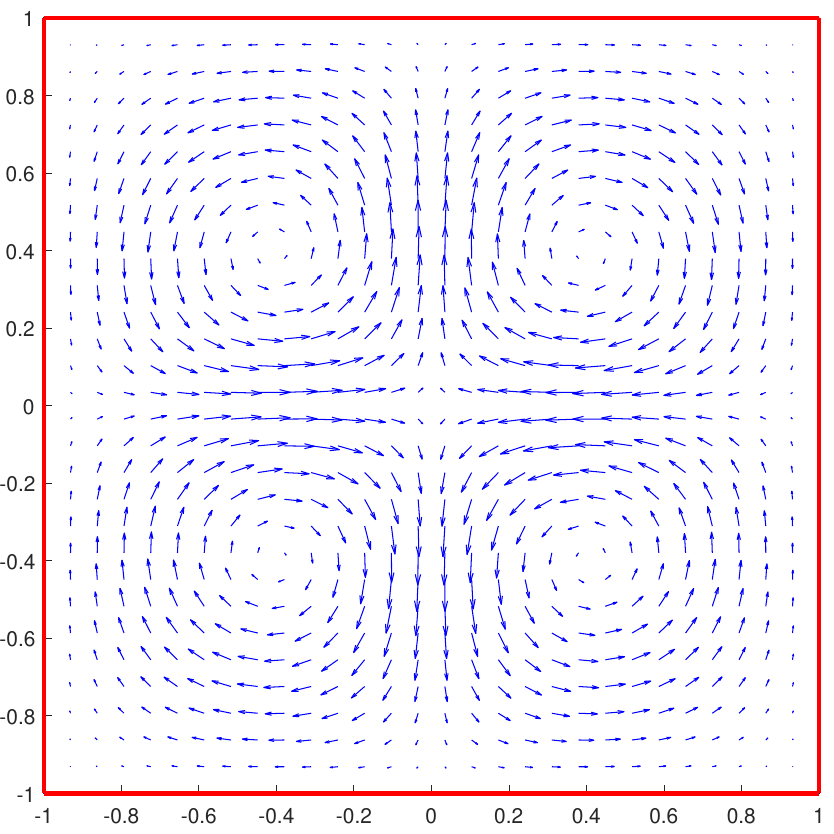}}
\subfigure[]{\label{fig:2stsubfigu6}\includegraphics[scale=0.26]{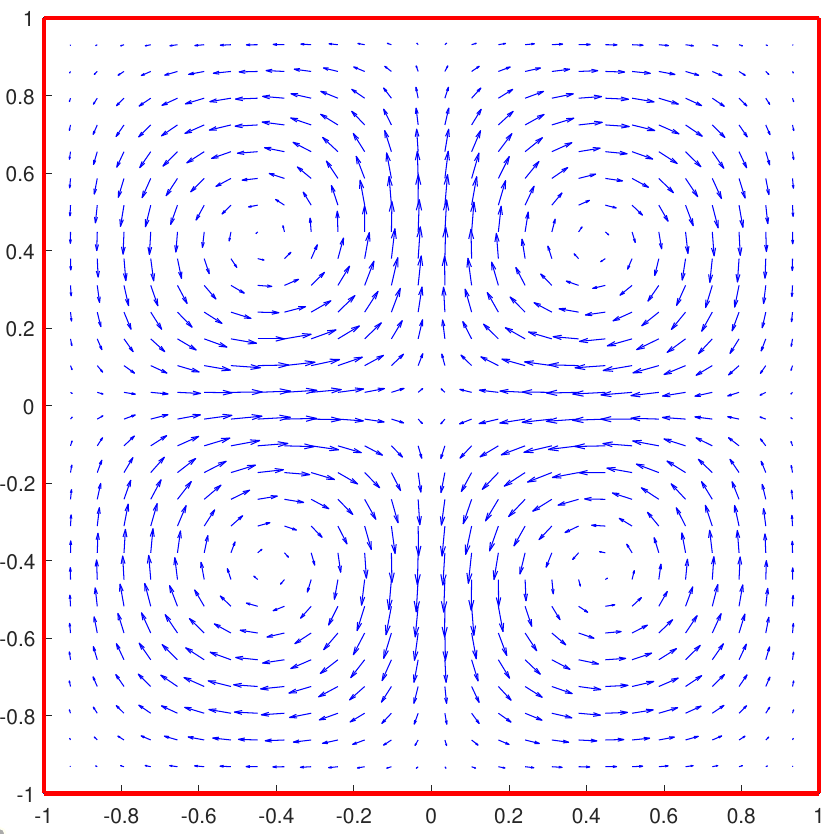}}	
\caption{Example \ref{63}, snapshots of the velocity $\uu$ by the PCSAV-ECT scheme at $t=$0.01, 0.2, 0.32, 0.33, 0.4, 0.5.}
\label{fig:2stsimpleu}
\end{center}
\end{figure}

\begin{figure}[tp]
\begin{center}	
\subfigure[]{\label{fig:2stsubfigunorm1}\includegraphics[scale=0.26]{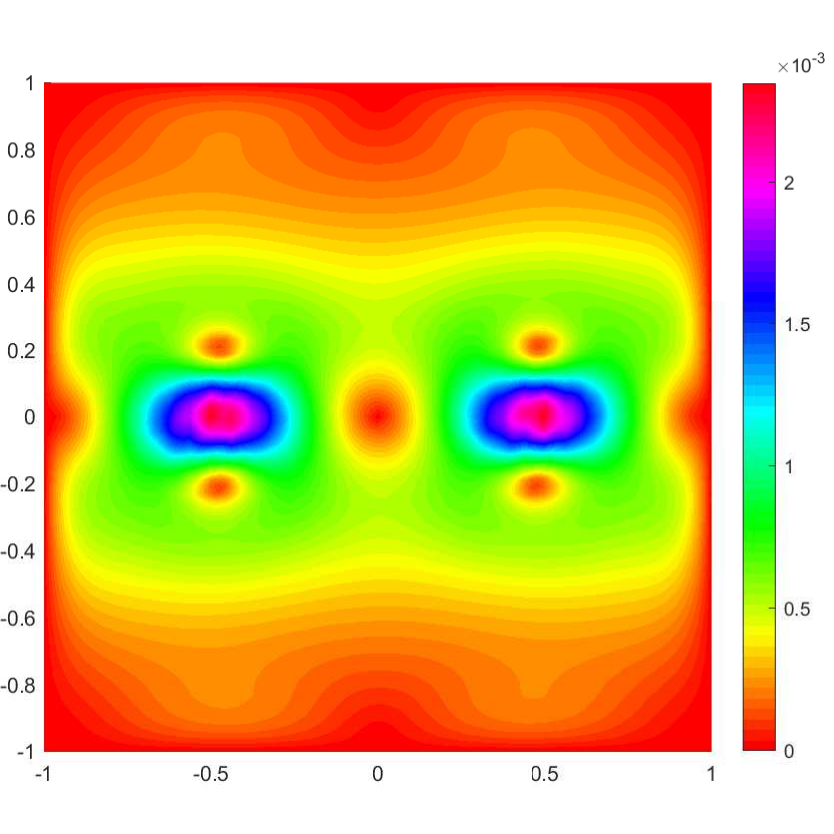}}	
\subfigure[]{\label{fig:2stsubfigunorm2}\includegraphics[scale=0.26]{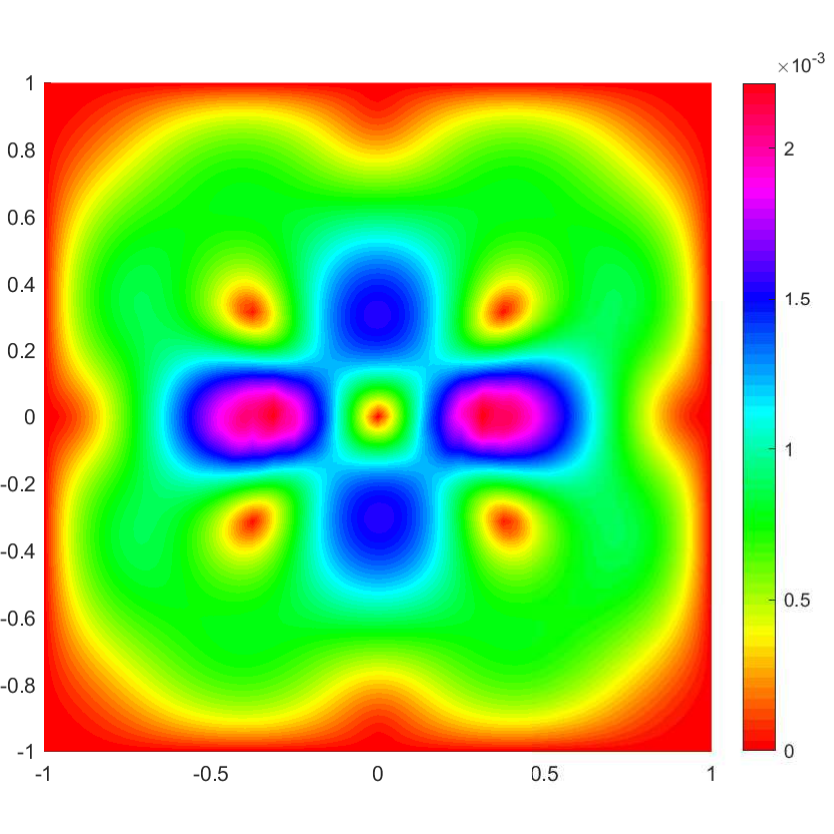}}	
\subfigure[]{\label{fig:2stsubfigunorm3}\includegraphics[scale=0.26]{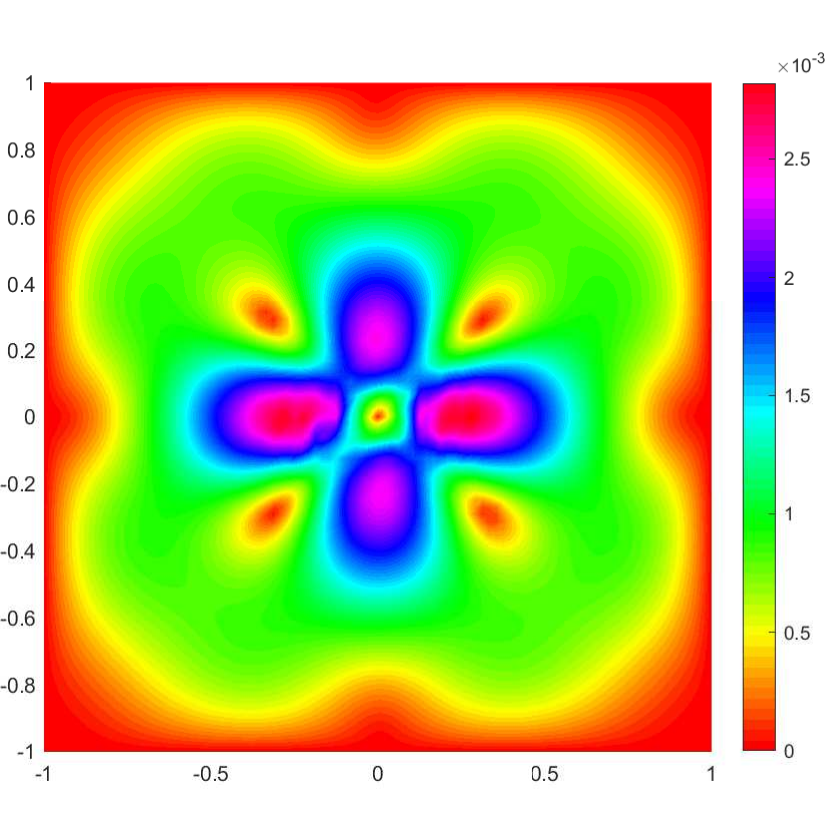}}	

\subfigure[]{\label{fig:2stsubfigunorm4}\includegraphics[scale=0.26]{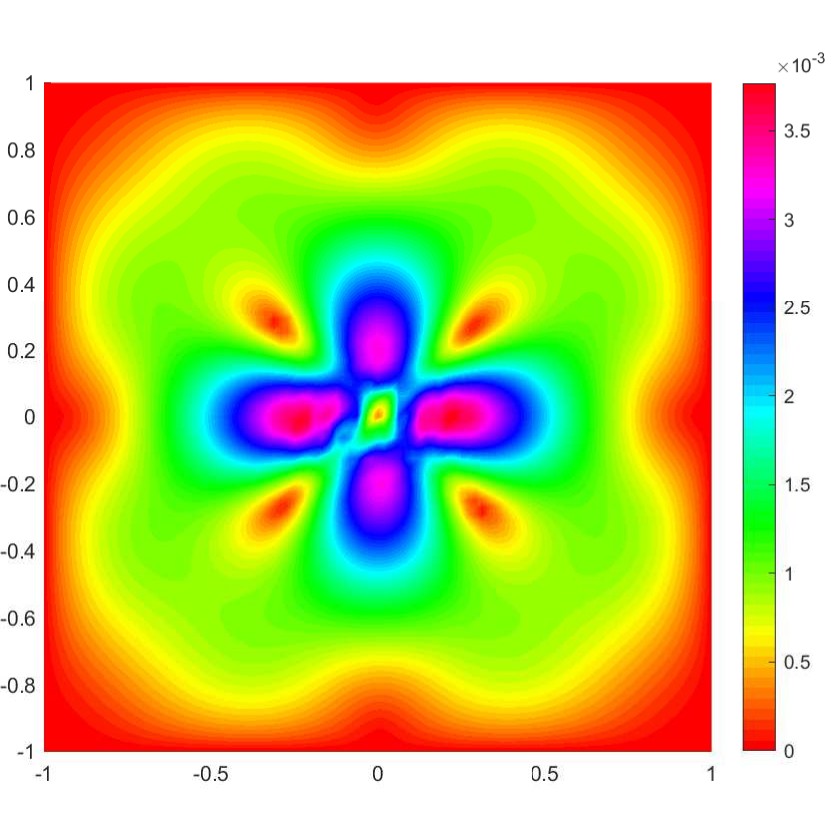}}
\subfigure[]{\label{fig:2stsubfigunorm5}\includegraphics[scale=0.26]{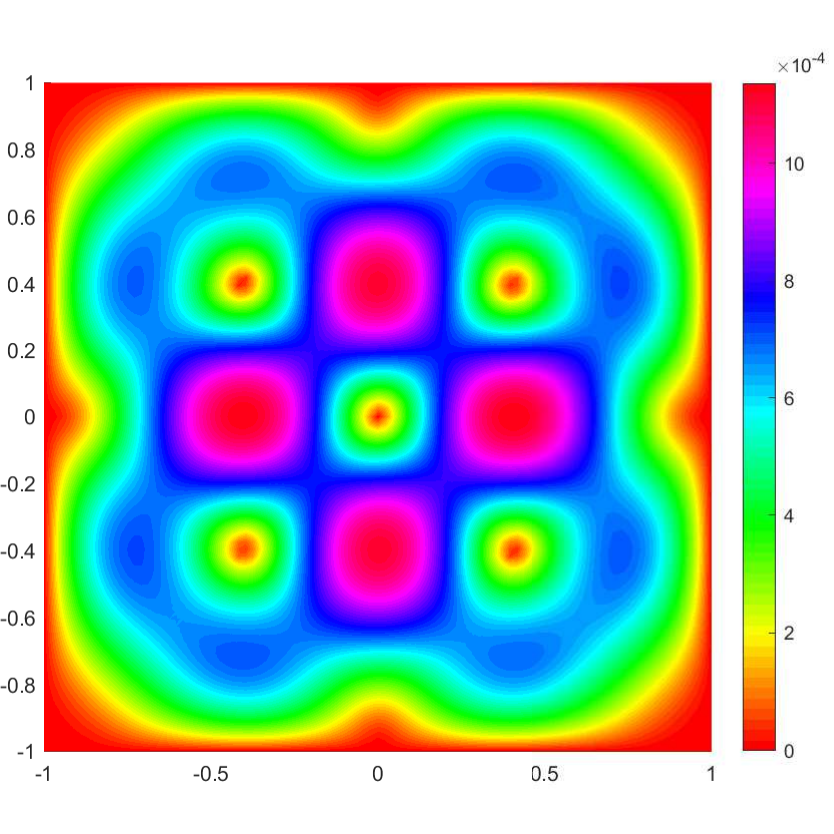}}
\subfigure[]{\label{fig:2stsubfigunorm6}\includegraphics[scale=0.26]{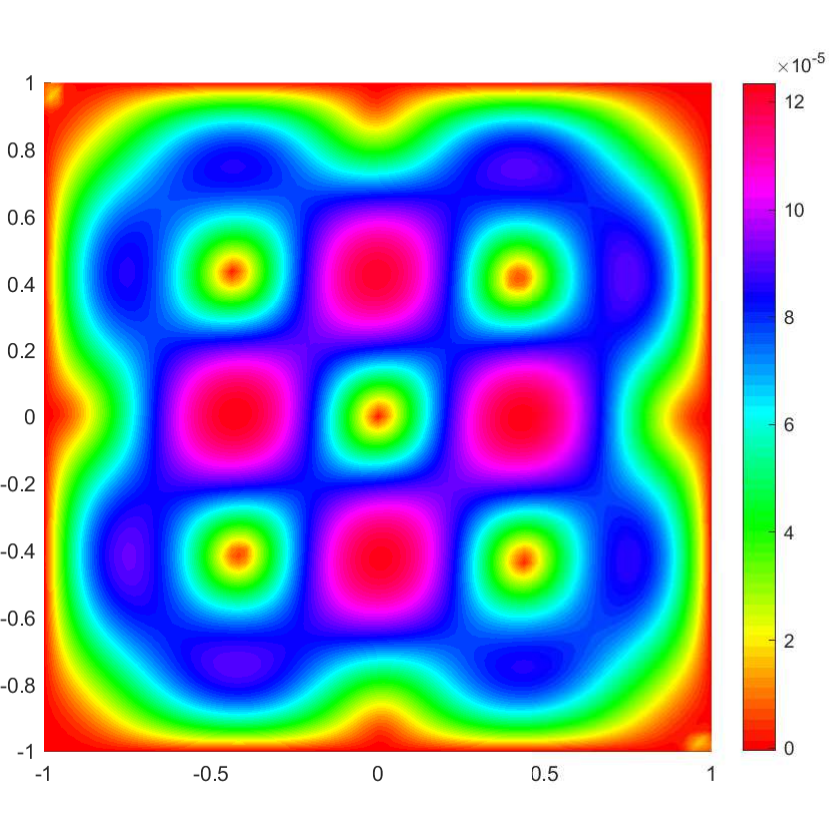}}	
\caption{Example \ref{63}, snapshots of $\lvert\uu\rvert$ by the PCSAV-ECT scheme at $t=$0.01, 0.2, 0.32, 0.33, 0.4, 0.5.}
\label{fig:2stsimpleu_norm}
\end{center}
\end{figure}

To assess the computational efficiency of the PCSAV-ECT scheme relative to the PCSAV scheme, the CPU times for Examples~\ref{621}, \ref{62}, and \ref{63} are summarized in Table~\ref{tab:cputime}. The results indicate that the PCSAV-ECT scheme consistently achieves lower computational cost, reducing the total runtime to 70.3\% on average, while preserving the accuracy and stability of the PCSAV scheme.

\begin{table}[tp]
\centering
\caption{\noindent CPU time spent on Examples~\ref{621}, Examples~\ref{62} and~\ref{63}, along with the relative percentage (PCT) compared to the PCSAV scheme.}
\label{tab:cputime}
\begin{tabular}{|c||c|c||c|c||c|c|}
\hline
\multirow{2}{*}{Scheme} 
& \multicolumn{2}{c||}{Example~\ref{621}}
& \multicolumn{2}{c||}{Example~\ref{62}} 
& \multicolumn{2}{c|}{Example~\ref{63}} \\
\cline{2-7}
& CPU time & PCT& CPU time & PCT & CPU time & PCT \\
\hline
PCSAV & 2092.005s & 100.0\% & 6056.58s & 100.0\% & 1132.41s & 100.0\% \\
PCSAV-ECT & 1524.7s & 72.9\% & 4150.19s & 68.5\% & 787.511s & 69.5\% \\
\hline
\end{tabular}
\end{table}

\section*{Conclusion}
In this work, we proposed a linear, unconditionally stable, and fully decoupled numerical scheme for the modified Ericksen-Leslie model of nematic liquid crystals. 
The scheme integrates the SAV technique to handle the nonlinear convection terms, a Lagrange multiplier method to linearize the nonlinear term, and a rotational pressure correction approach to decouple the incompressibility condition. 
The scheme updates each variable by solving a single linear system at each time step, thereby significantly improving computational efficiency while preserving the discrete energy dissipation law.

We apply the second-order backward differentiation formula (BDF2) for time discretization and rigorously establish the unconditional energy stability of the proposed scheme. 
Several numerical experiments are carried out to verify its accuracy, stability, and computational efficiency. 
The simulation results further demonstrate the capability of the scheme in capturing complex defect dynamics and the long-time behavior of the director field.
The proposed strategy provides a flexible and robust framework for simulating nematic liquid crystal flows and can be extended to more complex models, such as the full Ericksen-Leslie system or systems coupled with external electric or magnetic fields.

Although the proposed scheme achieves unconditional energy stability, linearity, and full decoupling, it does not strictly enforce the unit-length constraint $|\mathbf{d}| = 1$. 
Developing numerical schemes that rigorously preserve this constraint at the discrete level will be the subject of future research.

\section*{Acknowledgments}
Yi's research was partially supported by NSFC Project (12431014), 
Project of Scientific Research Fund of the Hunan Provincial Science and Technology Department (2024ZL5017), Postgraduate Scientific Research Innovation Project of Hunan Province (CX20240056), and Program for Science and Technology Innovative Research Team in Higher Educational Institutions of Hunan Province of China.

\end{document}